\documentclass[a4paper, 11pt]{article}
\usepackage{amsmath}
\usepackage{amssymb}
\usepackage{graphicx}
\newcommand{\re}[1]{(\ref{#1})}
\usepackage{amsmath,amsxtra,amssymb,latexsym, amscd,amsthm}
\usepackage{geometry}
\geometry{hmargin=3.4cm,vmargin=3.1cm}

\usepackage[dvipsnames]{xcolor}

\def\R{{\mathbb{R}}}

      \newtheorem{thm}{Theorem}[section]
      \newtheorem{propo}{Proposition}[section]
      
      \newtheorem{Def}{Definition}[section]
      \newtheorem{rmq}{Remark}[section]
      \newtheorem{lem}{Lemma}[section]
      
      \newtheorem{cor}{Corollary}[section]

      \newcommand{\F}{\mathcal{F}}
      \newcommand{\C}{\mathcal{C}}
 \newcommand{\rn}{\mathbb{R}^n}
  \newcommand{\hn}{\mathbb{R}^n_+}
 \newcommand{\co}{\mathcal{C}_\omega}
   \newcommand{\Mp}{\mathcal{M}^+}
      \newcommand{\Mm}{\mathcal{M}^-}

      \newcommand{\ofq}{\overline{F}_{Q,y}}

\begin{document}

\begin{center}{\bf\Large Stationary states of reaction-diffusion and Schr\"odinger systems with inhomogeneous or controlled diffusion  }
\end{center}

 \begin{center}
Boyan SIRAKOV
\\{\small\it PUC-Rio, Departamento de Matem\'atica, G\'avea, Rio de Janeiro, CEP 22451-900, Brazil}\smallskip

Alexandre MONTARU\\
{\small\it  Laboratoire de Math\'ematiques,
Universit\'{e} de Franche-Comt\'{e}, 25030 Besan\c{c}on, France }\end{center}

{\small \noindent{\bf Abstract}.
We obtain  classification, solvability and nonexistence theorems for positive stationary states of reaction-diffusion and Schr\"odinger systems involving a balance between repulsive and attractive terms. This class of systems contains PDE arising in biological models of Lotka-Volterra type, in physical models of Bose-Einstein condensates and in models of chemical reactions. We show, with different proofs, that the results obtained in [ARMA,  213 (2014), 129-169] for models with homogeneous diffusion are valid for general heterogeneous media, and even for controlled inhomogeneous diffusions.}
%

   \section{Introduction}

This paper is a sequel to our recent work \cite{MSS}, in which we studied stationary states of systems of reaction-diffusion PDEs or standing waves of coupled Schr\"odinger systems, including as a particular case the important for applications system
\begin{equation}
\label{model}
\left\{\ {\alignedat2
 -\Delta u  &=u\Big[v^p\left(a(x)v^q-c(x)u^q\right) + \mu(x)\big] \qquad &\mbox{ in }&\Omega, \\
  -\Delta v &=v\Big[u^p\left(b(x)u^q-d(x)v^q\right) + \nu(x)\big] \qquad &\mbox{ in }&\Omega,
 \endalignedat}\right.
 \end{equation}
 where $\Omega\subseteq\rn$,
\begin{equation}
 \label{cond_pq}
p\ge0,\qquad q>0,\qquad q\ge |1-p|,
 \end{equation}
 and the coefficients $a,b,c,d,\mu,\nu$ are H\"older continuous functions in
 $\overline{\Omega}$, with
 \begin{equation}
\label{cond_abcd}
a,b>0,\qquad c,d\ge0,\qquad ab-cd\ge 0\;\mbox{ in }\;\overline{\Omega}.
\end{equation}

Cases of particular interest are: \begin{itemize}
\item[(i)] $p=0$, $q=1$ -- then \re{model} is a Lotka-Volterra system, a model on biological species interactions;
\item[(ii)] $p=0$, $q=2$ -- then \re{model} models phenomena in nonlinear optics and the theory of Bose-Einstein condensates;
\item[(iii)] $p=1$, $q=1$ -- in this case \re{model} is related to a  model of chemical reaction. More general models in chemistry are obtained by varying $p$ and $q$.
    \end{itemize}
 We refer to Section 1.3 in \cite{MSS} for a more detailed discussion on these applications, as well as references. We observe the last condition in \re{cond_abcd} means the reaction in the system dominates the absorption, so there is no conservation of mass in the time-dependent version of \re{model}.
In \cite{MSS} we studied the classification and non-existence of positive solutions of \re{model} in unbounded domains, as well as the a priori bounds and existence results which can be inferred in smooth bounded domains.

 A major difficulty in the study of system \re{model} is that it is in general neither {\it cooperative} (or quasi-monotone), nor {\it variational} in the sense that its solutions cannot be written as critical points of some functional defined on a Banach space. The core of the new method developed in \cite{MSS} consists in proving Liouville type theorems for a system of elliptic inequalities satisfied by some auxiliary sub- and super-harmonic functions of $u,v$. This method gives many new results even for systems which happen to be variational, such as (i) and (ii) above with $a=d$.

 However, the proofs of the results in \cite{MSS} depend crucially on the fact that the second-order elliptic operator in \re{model} is the Laplacian, that is, in the models above only homogeneous diffusion can be considered. Our main goal here is to remove this hypothesis and show that the main results  in \cite{MSS} are valid for general operators in non-divergence form. It is remarkable that the (necessarily) different proofs we give here not only permit to generalize but also to shorten some of the proofs from \cite{MSS}. Our arguments will be entirely based on the maximum principle and its consequences from the regularity theory for elliptic equations.

 In terms of the applications, if the underlying stochastic process $X_t$ is not a pure diffusion (i.e. a Brownian motion $W_t$) but rather follows  $dX_t = \Sigma(X_t) dW_t$, for some positive variance matrix $\Sigma$, in the corresponding PDE the Laplacian is replaced by $tr(A(x) D^2u)= \sum a_{ij}(x)\partial_{ij}u$, where the matrix $A=\Sigma^T\Sigma$ accounts for the spatial inhomogeneity. If the process is also allowed to have drift $dX_t = b(X_t)dt+\Sigma(X_t) dW_t$, then we end up with the differential operator with a first order term
 \begin{equation}\label{linear}
 Lu=tr(A(x) D^2u) + b(x).Du= \sum a_{ij}(x)\partial_{ij}u + \sum b_i(x) \partial_i u.
  \end{equation}
  Replacing the Laplacian by such operators in the examples (i)-(iii) above means allowing heterogeneous media, as well as a possibility to consider gradient-dependent equations, i.e. advection in addition to  diffusion and reaction-absorption (for (i) and (iii)), or more general derivative Schr\"odinger equations (for (ii), see for instance \cite{B}, chapter I.6).

Even more generally, an object of intensive study are controlled processes, in which $X_t$ follows
$dX_t = b_{\alpha_t}(X_t) dt + \sigma_{\alpha_t}(X_t)dW_t$, where $\alpha_t$ is an index process corresponding to a choice made in order to maximize or minimize some cost function (see \cite{bardi}, \cite{FS}). The PDE operators modeling such processes are suprema or infima of linear operators as in \re{linear}, with fixed ellipticity constants and bounds for the coefficients. These operators are usually referred to as Hamilton-Jacobi-Bellman (HJB) operators, and are in turn a subclass of the so-called Isaacs min-max operators, basic in game theory.
In the following we will consider Isaacs operators as in \re{isaacs} below, that is, sup-inf over arbitrary index sets of linear operators as in \re{linear}
\begin{equation}\label{isaacs}
 \F[u]:=\displaystyle\mathop{\sup}_{\alpha\in {\cal
A}}\mathop{\inf}_{\beta\in {\cal B}}L^{(\alpha,\beta)}u =\displaystyle\mathop{\sup}_{\alpha\in {\cal
A}}\mathop{\inf}_{\beta\in {\cal B}} \left(\sum_{i,j} a^{(\alpha,\beta)}_{ij}(x)\partial_{ij}u + \sum_i
b_{i}^{(\alpha,\beta)}(x)
\partial_i u\right)\,;
\end{equation}
here ${\cal A}$, ${\cal B}$ are arbitrary index sets and the coefficients $a^{(\alpha,\beta)}_{ij}$, $b_{i}^{(\alpha,\beta)}$ are continuous functions. We assume that all eigenvalues of the symmetric matrices $A^{ (\alpha,\beta)}$ belong to a fixed interval $[\lambda,\Lambda]$, and that the $L^\infty$-norms of the vectors $b^{(\alpha,\beta)}$ are bounded by $B$, for some constants  $0<\lambda\le \Lambda$, $B\ge0$. Note this is equivalent to Definition~\ref{def32} below.
Note also that
$\F$ is linear and reduces to \re{linear} when $|{\cal A}|=|{\cal B}|=1$ in \re{isaacs}, and that
$\F$ is a HJB operator when $|{\cal A}|=1$ or $|{\cal B}|=1$. In the sequel we will write $\F[u]= F(D^2u, Du,x)$ when we want to distinguish the dependence of $\F$ in the derivatives of $u$ and $x$. Writing $F(D^2u)$ will mean $F$ is autonomous and depends only on the second derivatives of $u$.

Throughout the paper solutions are understood in the viscosity sense. By applying the well-known regularity results for viscosity solutions to the systems we consider, we know that their solutions are in $C^{1,\gamma}$ for some $\gamma>0$, and even in $C^{2,\gamma}$ provided the operator $\F$ is of HJB type and the coefficients in the equation are H\"older continuous. We observe that viscosity solutions are not an added complication, they provide a good framework in this setting, just like Sobolev spaces do for some divergence-form operators, even when one knows that any $H^1$-solution is classical.
\bigskip

 Next, we comment on the novelty of our results. First and foremost, they are the first of their kind for models like (i)-(iii) above with {\it controlled diffusions}, that is, for systems like \re{model} where the Laplacian is replaced by a fully nonlinear operator such as a Isaacs (or even HJB) operator.

Let us give some more context for each of the models (i), (ii), (iii). Linear operators as in \re{linear} were considered for Lotka-Volterra systems in a number of papers; the most general results available to date as well as references can be found in \cite{DLS}. When reduced to the linear case, the theorems below  strengthen the results from \cite{DLS}.

The comparison between the case of the Laplacian and more general operators is probably most easily made for (ii). For the last ten years there has been a huge amount of work for systems in the form (ii), and they all assume the differential operator is in divergence form, in particular the Laplacian. When reduced to (ii), our main results from \cite{MSS} completed the previous works on Schr\"odinger systems, by establishing existence results for the case $a,b,c,d\ge0$, which was almost unstudied. On the other hand, Theorem \ref{boun} below seems to be the first result whatsoever for Schr\"odinger systems governed by inhomogeneous diffusions, i.e. with non-divergence form linear elliptic operators, independently of the sign of the coefficients $a,b,c,d$.

Finally, for (iii), the results in \cite{MSS} seem to be the first on their kind. Here we extend these to spatially inhomogeneous and controlled diffusions, which have obvious relevance in chemistry.
\bigskip

 We observe that existence and non-existence for fully nonlinear  systems with a different class of  nonlinearities (with cooperative and fully coupled leading terms such as Lane-Emden type systems, see \re{lanesystemcone} below) can be found in \cite{QSsys}.  We refer to that paper, as well to \cite{BS} and \cite{IK}, for more examples and references on problems where fully nonlinear systems appear. In passing, and in order to provide a quotable source, below we will record several general nonexistence results for systems of Lane-Emden type, which essentially follow from a number of recent advances in the theory of Isaacs operators but do not seem to have appeared before. We will use some of these results in our proofs.
\bigskip

The following classification theorem is our first main result. It represents a strong rigidity property, and states that  nonnegative entire solutions $(u,v)$ of the  system
\begin{equation}
\label{modelf}
\left\{\ {\alignedat2
 -F(D^2u)  &=u\Big[v^p\left(av^q-cu^q\right)\big] \\
  -F(D^2v) &=v\Big[u^p\left(bu^q-dv^q\right)\big]
 \endalignedat}\right.
 \end{equation}
 can only be semi-trivial (i.e. $u\equiv0$ or $v\equiv0$) or have proportional components (i.e. $u/v=$const). In other words, rather unexpectedly, the existence of a positive entire solution of \re{modelf} is equivalent to the existence of a positive entire  solution of a scalar equation.

 \begin{thm}\label{nonexrn} Assume $a,b,c,d,p,q$ are real numbers such that \re{cond_pq} and \re{cond_abcd} hold.

 If $u,v$ are nonnegative functions which satisfy \re{modelf} in the whole space $\rn$, then either $u\equiv0$, or $v\equiv0$, or there exists a real number $K>0$ such that $u\equiv Kv$.
 \end{thm}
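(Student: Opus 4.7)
The plan is to dispose of the semi-trivial cases, identify the candidate proportionality constant $K_0$, and then use a Pucci-type one-sided inequality combined with an extremal-value argument on the ratio $u/v$ to conclude $u\equiv K_0 v$. If $u$ vanishes somewhere in $\R^n$, the strong maximum principle applied to the first equation in \re{modelf}---rewritten as $F(D^2u)+c_1(x)u=0$ with $c_1(x):=v^p(av^q-cu^q)$ bounded on compacts by the $C^{1,\gamma}$-viscosity regularity---forces $u\equiv 0$; symmetrically for $v$. Thus we may assume $u,v>0$ in $\R^n$.

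To identify $K_0$, I substitute $u=Kv$ into \re{modelf} and use the positive one-homogeneity $F(KD^2v)=KF(D^2v)$: the two equations are compatible iff $K$ is a positive root of
\[\phi(K):=bK^{p+q}+cK^q-dK^p-a.\]
Under \re{cond_pq}--\re{cond_abcd}, $\phi(0)<0$ and $\phi(+\infty)=+\infty$, and using the root equation $bK^{p+q}=a+dK^p-cK^q$ a short computation yields
\[K\phi'(K)\big|_{\phi(K)=0}=(p+q)a+qdK^p-pcK^q,\]
which is strictly positive under $ab\ge cd$ (by contradiction: if $(p+q)a<pcK^q$ at a root, necessarily $d>bK^q$, and then $c\le ab/d$ gives the contradiction $cK^q<a$). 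Hence $\phi$ has a unique positive root $K_0$, with $\phi<0$ on $(0,K_0)$ and $\phi>0$ on $(K_0,\infty)$.

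The heart of the proof is a one-sided differential inequality for $w_K:=u-Kv$. Using the Pucci extremal bounds $\mathcal{M}^-(X-Y)\le F(X)-F(Y)\le\mathcal{M}^+(X-Y)$ together with \re{modelf} and one-homogeneity, I obtain in the viscosity sense
\[\mathcal{M}^-(D^2 w_K)\le F(D^2u)-KF(D^2v)=G_K(u,v),\qquad \mathcal{M}^+(D^2 w_K)\ge G_K(u,v),\]
where $G_K(u,v):=-uv^p(av^q-cu^q)+Kvu^p(bu^q-dv^q)$ satisfies the crucial identity $G_K(Kv,v)=Kv^{p+q+1}\phi(K)$. Set $\underline K:=\inf_{\R^n}u/v$ and $\overline K:=\sup_{\R^n}u/v$, and first suppose $\underline K$ is attained at $x_0$. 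Then $w_{\underline K}\ge 0$ with $w_{\underline K}(x_0)=0$, so the constant $0$ touches $w_{\underline K}$ from below at $x_0$; the viscosity supersolution property gives $0=\mathcal{M}^-(0)\le G_{\underline K}(\underline Kv(x_0),v(x_0))=\underline Kv(x_0)^{p+q+1}\phi(\underline K)$, hence $\phi(\underline K)\ge 0$, i.e.\ $\underline K\ge K_0$. A symmetric argument at a point realizing $\overline K$, using the $\mathcal{M}^+$-inequality, yields $\overline K\le K_0$. Combined with $\underline K\le\overline K$, this forces $\underline K=\overline K=K_0$, and so $u\equiv K_0 v$.

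If the extremum is not attained---say $\underline K$---I take a minimizing sequence $\{x_n\}$ and pass to a blow-up limit: define $u_n(y):=u(x_n+\mu_n^{-1}y)/\lambda_n$ and $v_n(y):=v(x_n+\mu_n^{-1}y)/\mu_n$, with $\lambda_n,\mu_n>0$ chosen to normalize the values at $y=0$ (so that $\lambda_n/\mu_n\to\underline K$). By translation invariance and one-homogeneity of $F$, the rescaled pair solves a system of the same form (with an Isaacs operator in the same class), and interior $C^{1,\gamma}$ viscosity estimates produce a locally uniform subsequential limit $(u_\infty,v_\infty)$ solving the system on all of $\R^n$ and realizing $\inf u_\infty/v_\infty=\underline K$ at $y=0$, reducing to the attained case. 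The main obstacle is exactly this blow-up step: in the degenerate regimes $\underline K=0$ or $\overline K=+\infty$ one cannot normalize both components simultaneously, and one must appeal to an extra Liouville-type statement for $F$-super\slash subharmonic functions on $\R^n$ of the kind recorded elsewhere in the paper.
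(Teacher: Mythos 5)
The idea of comparing $u$ to $Kv$ and identifying the critical constant as the unique positive root of
$\phi(K)=bK^{p+q}+cK^{q}-dK^{p}-a$ is sound, and your algebra is correct: the identity
$K\phi'(K)\big|_{\phi=0}=(p+q)a+qdK^{p}-pcK^{q}$ and the contradiction via $ab\ge cd$ do establish that $\phi$ has a unique positive root $K_0$ crossing from negative to positive. The viscosity touching argument at an \emph{attained} extremum of $u/v$ is also correct: at a minimum point of $w_{\underline K}=u-\underline K v$ the inequality $\mathcal{M}^-(D^2 w_{\underline K})\le -f+\underline K g$ together with $(-f+Kg)(Kv,v)=Kv^{p+q+1}\phi(K)$ gives $\phi(\underline K)\ge 0$, and symmetrically $\phi(\overline K)\le 0$, hence $\underline K=\overline K=K_0$ whenever both extrema are achieved. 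This is a genuinely different route from the paper, which never looks at the ratio $u/v$; the paper instead introduces the auxiliary functions $Z=\min(u,K v)$ and $W=|u-Kv|$, shows that they satisfy one-sided weighted differential inequalities ($-F(D^2Z)\ge C W^\beta Z^r$ and $\mathcal{M}^+(D^2W)\ge 0$, Lemma~\ref{lem_principal_rn}), and then deduces $W\equiv 0$ from the Liouville-type theorems for weighted inequalities in Section~\ref{sect4} (Lemmas~\ref{lem_type_Lin}, \ref{lem_-F(D^2u)geq_V_u^r}, \ref{lem_liminf_V_sousharmonique}).

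The genuine gap is exactly the case you flag as ``the main obstacle'': on all of $\R^n$ there is no reason for $\inf u/v$ or $\sup u/v$ to be attained, and this is in fact the essential case of a Liouville theorem --- the whole difficulty lives in the non-compactness, which a pointwise touching argument cannot see. Your blow-up sketch does not close this. Several things go wrong. First, the spatial rescaling required to keep the first equation nondegenerate is $\rho_n^2\sim\mu_n^{-(p+q)}$, while the second equation requires $\rho_n^2\sim\lambda_n^{-(p+q)}$; these are compatible only if $\lambda_n/\mu_n$ stays bounded away from $0$ and $\infty$, which is precisely what fails when $\underline K=0$ or $\overline K=\infty$. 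You acknowledge this but do not supply the auxiliary Liouville theorem that would be needed. Second, even when $\underline K\in(0,\infty)$, passing to a subsequential $C^1_{\mathrm{loc}}$ limit requires a priori local bounds on the rescaled pair, and the right-hand sides in \re{modelf} change sign (repulsive vs. attractive terms), so neither Harnack-type estimates nor the half-Harnack estimates apply directly to $u_n$ or $v_n$; establishing these bounds would already need the kind of structure (e.g.\ superharmonicity of $Z$) that the paper extracts via Lemma~\ref{lem_min_uv}. Third, after rescaling, the coefficients in the limit system change with $\lambda_n/\mu_n$, so the critical constant for the limit is no longer $K_0$ but rather $1$; this is fixable, but it needs to be written out carefully, and you would still need the infimum to be attained at $y=0$ in the limit, which requires locally uniform convergence of the ratio --- again not automatic. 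In contrast, the paper's approach of converting the structural condition \re{condition_fg} into one-sided inequalities for $Z$ and $W$ and then invoking global Liouville theorems (which hold for \emph{all} nonnegative supersolutions, attained extremum or not) is precisely what avoids this compactness issue.
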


A particularly remarkable feature of this result is that it is independent of any notion of criticality, that is, of how large $p$, $q$ or $n$ are.

Theorem \ref{nonexrn}   reduces the question of existence of positive entire solutions of the system \re{modelf} to that of the scalar equations
\begin{equation}\label{scal}
-F(D^2u)  =0\qquad\mbox{and}\qquad -F(D^2u)  =u^{p+q+1}\qquad\mbox{in }\; \rn.
\end{equation}
 We discuss the available nonexistence results for these equations below.
\bigskip

Next, we consider the question of existence of positive solutions of \re{modelf} in cones of~$\rn$. By a cone we mean a set in the form $\co=\{x\in\rn\setminus\{0\}\::\: x/|x|\in \omega\}$, for some $C^2$-smooth subdomain of the unit sphere $\omega$.

\begin{thm}\label{nonexhs}
Assume $a,b,c,d,p,q$ are real numbers such that \re{cond_pq} and \re{cond_abcd} hold.
Let $u,v$ be nonnegative functions which satisfy \re{modelf} in a nonempty open cone $\co\subset \rn$, and are proportional or vanish on $\partial\co$.

(a) If $u$ and $v$ are bounded, then $u$ and $v$ are proportional, or $u$ and $v$ vanish.

(b) If $c=d=0$ then $u$ and $v$ are proportional, or one of them vanishes.
\end{thm}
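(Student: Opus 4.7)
I plan to prove Theorem \ref{nonexhs} by reducing to the entire-space rigidity Theorem \ref{nonexrn}, using Pucci-extremal inequalities for the Isaacs operator $F$ (namely $\Mm(M-N)\le F(M)-F(N)\le \Mp(M-N)$) together with the strong maximum principle and, in part (a), a blow-up argument.

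\textbf{Part (b).} With $c=d=0$ the system becomes $-F(D^2u)=auv^{p+q}$ and $-F(D^2v)=bvu^{p+q}$; the nonnegativity of the right-hand sides together with the strong maximum principle for $\Mm$ (which minorizes $F$) show that each of $u,v$ is either identically zero in $\co$ or strictly positive there. Assuming both strictly positive, take $K=(a/b)^{1/(p+q)}$, the unique constant for which $u=Kv$ is compatible with the equations via the homogeneity $F[Kv]=KF[v]$. For $w=u-Kv$ I combine the Pucci inequality with the identity $a=bK^{p+q}$ and the mean-value factorization $u^{p+q-1}-(Kv)^{p+q-1}=w\,\Phi(u,v)$ with $\Phi\ge 0$ (using $p+q\ge 1$, which follows from \re{cond_pq}) to derive
\[
\Mp(D^2w)\ge \gamma(x)\, w,\qquad \Mm(D^2w)\le \gamma(x)\, w\qquad\text{in }\co,\quad \gamma:=Kb\,uv\,\Phi\ge 0.
\]
The boundary hypothesis forces $w=0$ on $\partial\co$, and the Pucci strong maximum principle applied to each inequality in turn yields $w\le 0$ and $w\ge 0$, hence $w\equiv 0$, i.e., $u\equiv Kv$.

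\textbf{Part (a).} For bounded solutions I argue by contradiction via blow-up. Assume $(u,v)$ is bounded and nonnegative in $\co$, satisfies the boundary hypothesis, but is neither proportional nor both identically zero. Let $K'$ be the boundary proportionality constant (or any fixed constant in the vanishing case) and let $\delta(x)=|u(x)-K'v(x)|$, so $\delta=0$ on $\partial\co$ and $M:=\sup_\co\delta>0$. Choose $x_k\in\co$ with $\delta(x_k)\to M$. If $(x_k)$ stays bounded, compactness and $C^{1,\gamma}$ viscosity regularity furnish an interior maximum, at which the Pucci inequalities for $u-K'v$ (analogous to part (b), with the mismatch between $K'$ and the natural proportionality constant producing an additional bounded term controlled at the maximum) and the strong maximum principle give a contradiction. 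Otherwise $|x_k|\to\infty$: rescale $u_k(y)=u(x_k+R_k y)$, $v_k(y)=v(x_k+R_k y)$ with $R_k\to\infty$; since $F$ is autonomous in \re{modelf} and the system is scale-invariant after amplitude adjustment, $(u_k,v_k)$ solves a system of the same form. Boundedness and $C^{1,\gamma}$ estimates yield a subsequential limit $(U,V)$ on $\rn$ (if $\mathrm{dist}(x_k,\partial\co)/R_k\to\infty$) or on a half-space. The full-space case contradicts Theorem \ref{nonexrn} directly; the half-space case reduces to Theorem \ref{nonexrn} via an extension using the preserved boundary compatibility in the limit.

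The principal obstacles I foresee are these. In part (b), the degenerate case $p+q=1$ renders $\Phi\equiv 0$ and $\gamma\equiv 0$, so the Pucci inequalities become $\Mp(D^2w)\ge 0$, $\Mm(D^2w)\le 0$ without zero-order term; for solutions that are not a priori bounded in $\co$, handling this case requires a Phragm\'en--Lindel\"of-type bound, i.e., a Liouville theorem for Pucci operators on the cone. In part (a), the most delicate point is controlling the blow-up near the vertex of $\co$ and verifying that the boundary condition descends uniformly to the limit system, which likely calls for a preliminary doubling-lemma step restricting attention to blow-up points at controlled distance from both the vertex and $\partial\co$.
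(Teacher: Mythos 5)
Your algebraic reduction (the correct $K$, the factorization $(f-Kg)(u-Kv)\le0$, the Pucci-extremal sandwich $\Mm(D^2w)\le\gamma w\le\Mp(D^2w)$) matches the preliminary step in the paper (Proposition~\ref{propalg}, Lemma~\ref{lem_u-v}, Lemma~\ref{lem_|u-Kv|}). After that, your strategies for both parts diverge sharply from the paper's, and in part (b) the divergence conceals a genuine gap.

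For part (a) the paper's proof is a one-liner: by Lemma~\ref{lem_|u-Kv|} the function $|u-Kv|$ is a bounded viscosity subsolution of $\Mp(D^2\cdot)+B|D\cdot|\ge0$ in $\co$, vanishing on $\partial\co$, and the Phragm\`en--Lindel\"of principle for fully nonlinear operators in cones (Lemma~\ref{lem_phragmen_lindelhof}, quoting \cite{CDV}, \cite{ASS}) forces $|u-Kv|\le0$. Your proposed blow-up route toward Theorem~\ref{nonexrn} is genuinely different; it is considerably more involved (you would need a doubling lemma to control the location of blow-up points, a rescaling scheme that handles the inhomogeneous scaling of the amplitude versus the spatial variable, and a separate reduction from the half-space limit back to $\rn$, which for a general Isaacs operator is not automatic) and none of these is needed once one observes that the boundedness hypothesis is precisely the input required to invoke Phragm\`en--Lindel\"of directly. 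So your approach to (a) could in principle be made to work, but it trades a soft one-step argument for a hard multi-step one.

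For part (b) the proposal does not work as written. After you obtain $\Mp(D^2w)\ge\gamma w$ and $\Mm(D^2w)\le\gamma w$ with $\gamma\ge0$ and $w=0$ on $\partial\co$, you invoke the strong maximum principle to conclude $w\le0$ and $w\ge0$. But $\co$ is an unbounded domain and in part (b) the solutions, and hence $w$, are \emph{not} assumed bounded; the maximum principle (even with a nonnegative zero-order coefficient) fails in unbounded domains without a growth or boundedness hypothesis --- indeed on the half-space the linear function $x_n$ is a nonnegative harmonic function vanishing on the boundary. You flag a Phragm\`en--Lindel\"of issue only for the degenerate case $p+q=1$, but it is present for every $p+q\ge1$: the coefficient $\gamma$ is in general unbounded, and even when it is not, the absence of an a priori bound on $w$ blocks the argument. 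This is exactly why the paper proves (b) via the much more elaborate Theorem~\ref{newhs}: one studies the quotients $q_u(r)=\inf_{S^+_r}u/\Psi^-$ against the positive homogeneous solution $\Psi^-$ of $-F(D^2\Psi^-)=0$ in $\co$, shows their monotonicity by the comparison principle and Hopf's lemma, uses the boundary Harnack inequality to compare $\sup(w/\Psi^-)$ with $\inf(u/\Psi^-)$, and rules out the remaining alternative by the Liouville theorem for supersolutions in cones (Theorem~\ref{lioucone}). None of these ingredients appears in your proposal, and the step you substitute for them (the strong maximum principle) is invalid in this setting.
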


This result is independent of how large $p$ and $q$ are, and reduces the system to the equations \re{scal}, set in $\co$.

Note that results on classification of bounded solutions of fully nonlinear systems in cones were previously obtained in \cite{QSsys}, for a different type of nonlinearities (of Lane-Emden type), and with different proofs. On the other hand, the statement (b) above is to our knowledge the first classification result for unbounded solutions of systems with operators in nondivergence form. We also observe this result covers nonlinearities such as $uv^2$, $u^2v$, which were recently found to play an important role in some applications, see for instance \cite{BW}. \smallskip

Finally, we state the existence results in bounded domains which we can obtain as consequences of the previous theorems and well-known techniques from Leray-Schauder-Krasnoselskii degree theory.

We introduce the following notation. For any orthogonal matrix $Q$ and any fixed $y\in\overline{\Omega}$ we denote with $\ofq$ the pure second order operator defined by
$$
\ofq(D^2u):= F(Q^T\,D^2u\,Q,0,y).
$$
If $F$ is a linear operator as in \re{linear}, then it is easy to see that for each $y$ there exists $Q$ such that $\ofq(D^2u)=\Delta u$. If $F$ is rotationally invariant (such as, for instance, an extremal Pucci operator -- see Definition \ref{def31} below), then $\ofq=F$. Recall that  $F(D^2u)$ is  rotationally invariant  if it only depends on the eigenvalues of $D^2u$.

In the following we assume that $\Omega$ is a bounded Lipschitz domain such that each point $y$ on the boundary $\partial\Omega$ has a neighbourhood in $\overline{\Omega}$ which is $C^2$-diffeomorphic to a neighbourhood of the origin in some closed cone $\overline{\C_{\omega_y}}$. Observe if $\partial \Omega$ is $C^2$-smooth then every such cone is a half-space.

\begin{thm}\label{boun} Let $\Omega$ be as stated, and
assume the coefficients $a,b,c,d,\mu,\nu,p,q$ are  such that \re{cond_pq} and \re{cond_abcd} hold and
\begin{equation}
\label{hypodi}\inf_\Omega (ab-cd)>0,\qquad \mu,\nu\le0\quad\mbox{in }\;\Omega.
\end{equation}

Assume in addition that for each $y\in \overline{\Omega}$ there exists an orthogonal matrix $Q$ such that if $u$ is a   bounded nonnegative solution of the equation
\begin{equation}
\label{hypoli}
-\ofq(D^2u)=u^{p+q+1} \quad\mbox{in } \rn\mbox{ or } \; \C_{\omega_y},\quad\mbox{then}\quad u\equiv0,
\end{equation}
(see Proposition \ref{propoli} below).
Then the system \re{model}, with the Laplacian replaced by any general Isaacs operator as in \re{isaacs}, has a positive solution in $\Omega$ with $u=v=0$ on $\partial\Omega$.
\end{thm}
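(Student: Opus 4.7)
The plan is to combine a priori $L^\infty$ estimates for positive solutions of the system (with $-\Delta$ replaced by $\F$) with a Leray-Schauder-Krasnoselskii degree argument in the positive cone of $C_0(\overline{\Omega})\times C_0(\overline{\Omega})$. The estimates are obtained by a blow-up argument whose limiting equation is ruled out using Theorems \ref{nonexrn} and \ref{nonexhs}(a) together with the scalar Liouville hypothesis \re{hypoli}.

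For the a priori bound, I assume for contradiction a sequence of positive solutions $(u_k,v_k)$ with $M_k:=\max(\|u_k\|_\infty,\|v_k\|_\infty)\to\infty$, pick $x_k\in\overline{\Omega}$ at which this max is (essentially) attained, and set $\lambda_k=M_k^{-(p+q)/2}$, the unique scaling under which the nonlinearity of total degree $p+q+1$ is preserved. With an orthogonal matrix $Q$ to be chosen later via \re{hypoli}, I define $\tilde u_k(y):=M_k^{-1}u_k(x_k+\lambda_k Qy)$ and $\tilde v_k$ similarly. The first order terms of $\F$ scale by $\lambda_k\to 0$ and vanish, the lower order perturbations $\mu u,\nu v$ scale by $\lambda_k^2$ and vanish, and the coefficients $a^{(\alpha,\beta)}_{ij}(x_k+\lambda_k Qy)$ converge to $a^{(\alpha,\beta)}_{ij}(x_*)$. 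The standard $C^{1,\gamma}_{loc}$ interior and boundary estimates for viscosity solutions of Isaacs equations give, along a subsequence, a $C^{1,\gamma}_{loc}$ limit $(\tilde u,\tilde v)$ with values in $[0,1]$. This limit solves the autonomous constant-coefficient version of \re{modelf} with coefficients $a(x_*),b(x_*),c(x_*),d(x_*)$ (still in the class \re{cond_abcd}, and indeed with $a(x_*)b(x_*)-c(x_*)d(x_*)>0$ by \re{hypodi}) and operator $\overline{F}_{Q,x_*}$, set in $\rn$ if $d(x_k,\partial\Omega)/\lambda_k\to\infty$, or in the tangent cone $\C_{\omega_{x_*}}$ otherwise (using the $C^2$-diffeomorphism condition on $\partial\Omega$ to identify the blow-up of $\Omega$ at $x_*$ with $\C_{\omega_{x_*}}$).

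Theorem \ref{nonexrn} (interior case) or Theorem \ref{nonexhs}(a) (boundary case, where the boundedness $\tilde u,\tilde v\le 1$ is crucial) then forces $\tilde u\equiv 0$, $\tilde v\equiv 0$, or $\tilde u=K\tilde v$ for some $K>0$. In the proportional case, both equations reduce $\tilde v$ to a bounded positive solution of $-\overline{F}_{Q,x_*}(D^2\tilde v)=\alpha\tilde v^{p+q+1}$ with $\alpha=a(x_*)-c(x_*)K^q=K^p(b(x_*)K^q-d(x_*))>0$ (positivity of $\alpha$ uses $a(x_*)b(x_*)-c(x_*)d(x_*)>0$); choosing $Q$ as in \re{hypoli} forces $\tilde v\equiv 0$. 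The semi-trivial cases are ruled out by scalar Liouville theorems for bounded sub/super-solutions of the corresponding scalar Isaacs equation or inequality---applying either the strong maximum principle (when the equation degenerates to $\F w=0$) or the auxiliary Lane-Emden type nonexistence results alluded to in the introduction. Either way, $(\tilde u,\tilde v)\equiv 0$, contradicting $\max(\tilde u_k(0),\tilde v_k(0))=1$.

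With the a priori bound in hand, one writes the system as a compact fixed point equation $(u,v)=T(u,v)$ in the positive cone of $C_0(\overline{\Omega})^2$, with $T$ built from the Dirichlet solution operator of $-\F$ (compactness and positivity coming from the $C^{1,\gamma}$ theory and the Alexandroff-Bakelman-Pucci estimate). A standard homotopy along $T_t$, interpolating between the nonlinearity of interest and a trivial one whose fixed point index on a small ball is computable, together with the index computation on a large enough annulus excluding solutions on its outer boundary, then yields a nontrivial positive fixed point. The main obstacle is the boundary blow-up: one must simultaneously control the convergence of the rescaled domain to precisely $\C_{\omega_{x_*}}$, track the rotation $Q$ so that both the cone and the limiting operator are in the form appearing in \re{hypoli}, and rule out the semi-trivial limits. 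The last two points are where the flexibility in the choice of $Q$ afforded by \re{hypoli} and the scalar Isaacs-Liouville results are truly essential.
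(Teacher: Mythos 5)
Your overall strategy---a blow-up argument giving a priori $L^\infty$ bounds, followed by a degree-theoretic existence argument---is exactly the one the paper follows (through the more general Theorem~\ref{AprioriBound2}). However, there is a genuine gap in your disposal of the semi-trivial blow-up limits. You propose to exclude a rescaled limit $(\tilde u,\tilde v)=(\bar C,0)$ with $\bar C>0$ by ``scalar Liouville theorems for bounded sub/super-solutions'' or by the strong maximum principle. This cannot work: $(\bar C,0)$ with $\bar C>0$ constant \emph{is} a bounded nonnegative solution of the limiting autonomous system --- for $p>0$ it reduces to $\overline{F}_{Q,x_*}(D^2U)=0$ together with the identity $0=0$, which any positive constant solves, and for $p=0$ the leftover term has a definite sign that a constant does not violate. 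The paper explicitly acknowledges, via its whole-space classification theorem, that a semi-trivial limit of exactly this form does occur after blow-up, and rules it out not on the limiting equation but on the \emph{original} sequence by an eigenvalue scaling argument: since $\tilde v_k\to 0$ while $\tilde u_k\to\bar C>0$ locally, the equation for $v_k$ on the balls $B_{\lambda_k R}(x_k)$ carries a zeroth-order coefficient of order $b\,M_k^{p+q}=b\,\lambda_k^{-2}$ in front of $v_k$, which by the definition of $\lambda_1^+$ forces $\lambda_1^+(-\F,B_{\lambda_k R}(x_k))$ to be at least of the same order; but by homogeneity $\lambda_1^+(-\F,B_{\lambda_k R})=\lambda_1^+(-\F,B_1)/(\lambda_k R)^2$, and sending $R\to\infty$ gives a contradiction. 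This step is also precisely where the sign condition $\mu,\nu\le 0$ of \re{hypodi} enters the a priori bound; the fact that your proposal never invokes that hypothesis for the estimates is itself a symptom that something is missing.

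A secondary remark: in the degree part, the substantive work is verifying the ``no small solutions'' hypothesis of the Krasnoselskii--Benjamin type fixed point theorem used in \cite{MSS}, which in this fully nonlinear setting requires introducing the auxiliary function $\mathcal{S}=\min\{u,v\}$ and establishing the viscosity inequality $-\F[\mathcal{S}]\ge (A-C_1)\mathcal{S}$, replacing the $\sqrt{uv}$ device available only for the Laplacian. Your outline treats this step as routine, whereas it is one of the genuinely new ingredients needed to pass from the divergence-form to the fully nonlinear setting.
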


It is worth observing that we can consider Lipschitz domains almost ``free of charge", instead of only smooth domains. This is also a new feature with respect to previous works on these types of systems, even for systems with Laplacians.


The first hypothesis in \re{hypodi} cannot be weakened, even for simple systems with the Laplacian (see the remark after Corollary 1.4 in \cite{MSS}). As will be clear from the proof, instead of assuming that $\mu,\nu\le0$ in \re{hypodi} we could suppose that $\mu, \nu$ are smaller than the (positive) first semi-eigenvalue of the Pucci maximal operator, or than the first semi-eigenvalue of $F$ itself, if $F$ is a HJB operator (Definition \ref{def33}).

A discussion of hypothesis \re{hypoli} is in order.

In general, the only non-existence results in unbounded domains for the equation in \re{hypoli} concern positive supersolutions. Optimal results for nonexistence of positive supersolutions of \re{hypoli} in the whole space or in cones are known, \cite{CL}, \cite{armsirfirst}, \cite{armsir}, \cite{L}, \cite{ASS}, and can be expressed in terms of the so-called scaling exponents of the operator $\ofq$ in \re{hypoli}. These results will be discussed in detail in the next section, we will record here their consequences which apply to hypothesis~\re{hypoli}.

 Given an Isaacs operator $F(D^2u)$, we denote with $\alpha^*(F)$ the ''scaling exponent" of $F$, defined as  the supremum of all positive $\alpha$ such that $-F(D^2u)\ge0$ has a $(-\alpha)$-homogeneous solution in $\rn\setminus\{0\}$ (if such $\alpha$ exist; if not, then even $-F(D^2u)\ge0$ has no positive supersolutions in $\rn$). Similarly,  we denote with  $\alpha^+(F,\co)$  the supremum of all positive $\alpha$ such that $-F(D^2u)\ge0$ has a $(-\alpha)$-homogeneous solution in $\co\setminus\{0\}$ (such $\alpha$ always exist) See \cite{ASS2} and \cite{ASS} for more details on these scaling exponents and the way they describe properties of the operators $F$. Obviously $\alpha^*(\ofq)\le \alpha^+(\ofq,\C)$.

It is known that $\alpha^*(\Delta)= n-2$, $\alpha^+(\Delta,\rn_+)=n-1$, and for every Isaacs operator
\begin{equation}
\label{bounds1}
\frac{\lambda}{\Lambda}(n-1)-1\le\alpha^*(F) \le \frac{\Lambda}{\lambda}(n-1)-1,\qquad \frac{\lambda}{\Lambda}n-1\le\alpha^+(F,\rn_+)\le \frac{\Lambda}{\lambda}n-1.
\end{equation}
These bounds are obtained by evaluating the  scaling exponents for the Pucci extremal operators, and first appeared in \cite{CL} and \cite{L}.

The following proposition contains necessary conditions for \re{hypoli}, which are direct consequences from the Liouville results for the equation in \re{hypoli}  in the next section.
\begin{propo}\label{propoli}
The hypothesis \re{hypoli} in Theorem \re{boun} is satisfied if\begin{itemize}
 \item[(i)] $\displaystyle p+q\le \frac{2}{\alpha^+(\ofq,\C_{\omega_y})}$, $y\in \partial\Omega$ and $\displaystyle p+q\le \frac{2}{\max\{\alpha^*(\ofq), 0\}}$, $y\in \Omega$, or

\item[(ii)] $\Omega$ is a $C^2$-domain, $\overline{F}_y(D^2u)=F(D^2u,0,y)$ is a rotationally invariant operator for each $y\in \overline{\Omega}$, and
$\displaystyle p+q\le \frac{2}{\max\{\alpha^*(\overline{F}_y), 0\}}$, or

\item[(iii)] $\Omega$ is a $C^2$-domain, $F$ is linear as in \re{linear}, and $\displaystyle  p+q<\frac{4}{n-2}$.
\end{itemize}
\end{propo}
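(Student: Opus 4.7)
The strategy is, for each of the three listed conditions, to verify hypothesis \re{hypoli} by invoking the appropriate Liouville-type nonexistence theorems for positive (super)solutions of the scalar equation $-G(D^2 u)\ge u^{p+q+1}$ set either in $\rn$ or in an open cone. The critical exponents of these Liouville theorems are $1+2/\alpha^*(G)$ in $\rn$ and $1+2/\alpha^+(G,\co)$ in a cone $\co$, with the convention that when the scaling exponent is non-positive the threshold is $+\infty$ (since then even $-G(D^2 u)\ge 0$ admits no positive supersolution). These theorems are the ones discussed just before the proposition and recalled in detail in the next section, and they actually rule out all positive supersolutions, hence in particular all bounded nonnegative nontrivial solutions of the corresponding equation.

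For case (i), the choice of the orthogonal matrix $Q$ is immaterial: for each $y\in\overline{\Omega}$ fix any $Q$ and apply the whole-space Liouville theorem with $G=\ofq$ when $y\in\Omega$, and the cone Liouville theorem with $\co=\C_{\omega_y}$ when $y\in\partial\Omega$. The assumed inequalities on $p+q$ are precisely the subcritical conditions, so no positive bounded solution of $-\ofq(D^2u)=u^{p+q+1}$ exists in the relevant domain, and \re{hypoli} follows.

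For case (ii), the $C^2$-smoothness of $\partial\Omega$ forces every boundary cone $\C_{\omega_y}$ to be a half-space, while rotational invariance of $\overline{F}_y$ yields $\ofq=\overline{F}_y$ irrespective of $Q$. The whole-space part is handled as in (i). For the half-space part, we exploit rotational invariance through the moving-plane method (in its fully nonlinear incarnation, see \cite{ASS}) to show that any bounded nonnegative solution of the Dirichlet problem $-\overline{F}_y(D^2u)=u^{p+q+1}$ in a half-space is monotone and then one-dimensional, depending only on the normal variable; a reflection/extension argument reduces the question to the whole space, where the $\alpha^*$-based Liouville theorem applies. This is why only $\alpha^*(\overline{F}_y)$, and not the larger $\alpha^+(\overline{F}_y,\hn)$, enters the sufficient condition.

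For case (iii), freezing the coefficients of the linear operator $F$ at $y$ produces $\overline{F}_y(D^2u)=\sum_{i,j} a_{ij}(y)\partial_{ij}u$, with $A(y)=(a_{ij}(y))$ symmetric and positive definite. Choosing $Q$ to diagonalize $A(y)$ and then rescaling coordinates along the eigendirections (absorbing the residual scalar factor by a suitable rescaling of $u$) transforms $\ofq$ into the Laplacian $\Delta$, both on $\rn$ and on a half-space. The classical Gidas--Spruck nonexistence theorem then yields \re{hypoli} precisely under $p+q+1<(n+2)/(n-2)$, i.e., $p+q<4/(n-2)$. The principal obstacle in the whole argument is case (ii): matching the half-space Liouville theorem with the sharper whole-space exponent $\alpha^*$ requires that bounded solutions inherit one-dimensional symmetry from the rotational invariance of the operator, which is delicate but available through moving planes.
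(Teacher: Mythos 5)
Your proposal is essentially correct and follows the same route the paper intends. The paper presents the proposition as a ``direct consequence'' of the Liouville results recorded in Section~\ref{sect2}, and your three-case treatment matches that: case (i) invokes the supersolution Liouville theorems (Theorems \ref{lioucone}/\ref{liouconegrad} for the cone, Theorem \ref{lioueq} for $\rn$); case (ii) uses the moving-plane reduction of the half-space problem to the whole space for rotationally invariant operators, which the paper attributes to \cite{D} and \cite{QSeq}; case (iii) reduces the linear operator to the Laplacian and applies the classical subcritical-exponent nonexistence results. Two small precisions are worth noting. In case (i) the orthogonal matrix $Q$ is not entirely immaterial: $\alpha^*(\ofq)$ is independent of $Q$, but $\alpha^+(\ofq,\C_{\omega_y})$ genuinely depends on $Q$ for a non-rotationally-invariant operator, so the condition should be read as holding for the $Q$ used in \re{hypoli}. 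In case (iii), the reduction to $\Delta$ requires an anisotropic linear change of variable (diagonalizing $A(y)$ and rescaling each axis), not a rotation alone; this is fine because an invertible linear map sends half-spaces to half-spaces and preserves the Liouville property, but it means the reduction goes through the equivalence of the Liouville properties rather than through a literal identity $\ofq=\Delta$ with orthogonal $Q$ -- which actually slightly corrects a loose remark in the paper.
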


Observe the upper bounds in this proposition are void, if the scaling exponent $\alpha^*$ is nonpositive (this is the case for instance for the Pucci maximal operator if $\frac{\Lambda}{\lambda}\ge n-1$). For any given operator the scaling exponents can be evaluated in terms of (bounds on) the coefficients of the operator, by constructing homogeneous sub- and super-solutions, and the bounds thus obtained can be combined with Proposition \ref{propoli}. The most general bounds are given in \re{bounds1}, and combining \re{bounds1} with Proposition \re{propoli} gives explicit relations between $\lambda$, $\Lambda$, $n$, $p$, $q$, such that \re{hypoli} is satisfied. In the linear case, Proposition \ref{propoli} (iii), the result is particularly simple to state.\smallskip

The paper is organized as follows. In the next section we state more general results which follow from our proofs, and list various non-existence results for elliptic inequalities and systems in unbounded domains. Section \ref{sect3} contains some easy or well-known preliminary results; while in Section \ref{sect4} we prove nonexistence results in unbounded domains for a class of systems that will be derived from the systems we consider. Section \ref{sect5} contains the proofs of the classification theorems in unbounded domains (Theorems \ref{nonexrn} and \ref{nonexhs}, and their more general versions, Theorems \ref{thm_proportionnalite_hn}, \ref{newhs} and \ref{thm_proportionnalite_rn}) while in Section \ref{sect6} we give the proof of the existence result, Theorem \ref{boun} (and its more general version, Theorem \ref{AprioriBound2}).

 \section{More general results and Liouville theorems}\label{sect2}

 Our proofs yield more general results than the ones stated in the introduction. We list these more general theorems in this section. We also discuss here Liouville type results for scalar inequalities and systems in unbounded domains.

The system \re{modelf} is included in the class of systems
\begin{equation}
\label{mainsyst}
\left\{\ {\alignedat2
 -\F[u]&=f(x,u,v),&\qquad x\in \Omega\\
 -\F[v]&=g(x,u,v),&\qquad x\in \Omega,
 \endalignedat}\right.
 \end{equation}
where the main feature of the nonlinearities $f$ and $g$ (or their leading order terms) is that they satisfy the  condition
\begin{equation}
\label{condition_fg}
\exists\: K>0\::\:\quad [f(x,u,v)-Kg(x,u,v)][u-Kv]\leq 0  \quad \hbox{ for all $(u,v)\in \mathbb{R}^2, \;x\in \Omega$. }
\end{equation}
Indeed, we recall the following result from \cite{MSS} (Proposition 1.3 in that paper).

\begin{propo}[\cite{MSS}]\label{propalg} If $f= u^rv^p[av^q-cu^q]$ and $g= v^ru^p[bu^q-dv^q]$, where the real parameters $a,b,c,d,p,q,r$ satisfy
\begin{equation}\label{Hyp_abcdpqr}
a, b>0,\quad c,d\ge 0,\quad ab\ge cd, \qquad p,r\ge 0,\quad q>0,\quad q\ge |p-r|.
\end{equation}
then $f$ and $g$ satisfy \re{condition_fg}, for a unique number $K$ such that $a-cK^q\ge 0$, $bK^q-d\ge0$.
\end{propo}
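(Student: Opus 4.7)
I would reduce \re{condition_fg} to a one-variable sign analysis. Jointly, $f$ and $g$ are homogeneous of degree $p+q+r+1$ in $(u,v)$; for $v > 0$, setting $t = u/v$ factors
\begin{equation*}
[f(x,u,v) - Kg(x,u,v)](u-Kv) = v^{p+q+r+1}(t-K)\,h(t),\qquad h(t) := t^r(a-ct^q) - Kt^p(bt^q - d),
\end{equation*}
while the cases where one of $u,v$ vanishes are immediate from $p,r \ge 0$. Condition \re{condition_fg} thus amounts to $(t-K)h(t) \le 0$ for all $t \ge 0$, which forces $h(K) = 0$ together with the correct sign change across $K$.

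To locate $K$, define $\alpha(K) := a - cK^q$ and $\beta(K) := bK^q - d$. The equation $h(K) = 0$ reads $K^r\alpha(K) = K^{p+1}\beta(K)$, and the requirement $\alpha, \beta \ge 0$ restricts attention to the interval $I := [(d/b)^{1/q},\,(a/c)^{1/q}]$, which is nonempty by $ab \ge cd$ (the degenerate cases $c=0$ or $d=0$ are easier and handled directly). The continuous function $\Phi(K) := K^r\alpha(K) - K^{p+1}\beta(K)$ is $\ge 0$ at the left endpoint of $I$ (where $\beta$ vanishes) and $\le 0$ at the right (where $\alpha$ vanishes), so the intermediate value theorem yields $K^* \in I$ with $\Phi(K^*) = 0$.

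It remains to verify $(t-K^*)h(t) \le 0$ for all $t \ge 0$ and the uniqueness of $K^*$. For $t \notin I$, the sign is immediate: both summands of $h$ share a common sign dictated by $\alpha(t)$ and $-\beta(t)$. For $t \in I$, the analysis is subtler because the two summands have opposite signs. Focus on the case $r \ge p$ (the case $r < p$ is handled analogously, with $\tilde\mu(t) := (a-ct^q)/[t^{p-r}(bt^q-d)]$ replacing $\mu$ below); set $s := r - p \in [0,q]$ by \re{Hyp_abcdpqr}, write $h(t) = t^p G(t)$, and note that on the interior of $I$ one has $bt^q - d > 0$, so $G(t) = 0$ is equivalent to $\mu(t) = K$ with $\mu(t) := (at^s - ct^{s+q})/(bt^q - d)$. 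A direct computation gives
\begin{equation*}
\mu'(t)\,(bt^q-d)^2 \;=\; -t^{s-1}\,Q(t^q),\qquad Q(X) := bcs\,X^2 + \bigl(ab(q-s) - cd(s+q)\bigr)X + ads,
\end{equation*}
with discriminant $\Delta_Q = (ab-cd)\bigl(ab(q-s)^2 - cd(q+s)^2\bigr)$. The crux is to show $Q \ge 0$ on $[0,\infty)$: if $\Delta_Q \le 0$, this is immediate since the leading and constant coefficients of $Q$ are nonnegative; if $\Delta_Q > 0$, then $\sqrt{ab/cd} > (q+s)/(q-s) \ge 1$, which forces the middle coefficient $ab(q-s) - cd(s+q)$ to be positive, so all three coefficients of $Q$ are nonnegative and again $Q \ge 0$. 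Consequently $\mu$ is strictly decreasing on the interior of $I$, from $+\infty$ (or the relevant finite limit when $ab = cd$) down to $0$, so $\mu(t) = K$ has exactly one solution $t = K^*$, and the sign pattern $(t - K^*) h(t) \le 0$ follows. The main obstacle is precisely the sign analysis of $Q$ on $[0,\infty)$: both structural hypotheses $ab \ge cd$ and $q \ge |p-r|$ are used in a coordinated way to rule out positive roots of $Q$, and without either one the argument breaks down.
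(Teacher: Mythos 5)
Your reduction by homogeneity to the one-variable sign condition $(t-K)h(t)\le 0$ is the right move, and the algebra checks: the decomposition, the formula $\mu'(t)\,(bt^q-d)^2 = -t^{s-1}Q(t^q)$, the polynomial $Q$, and the identity $\Delta_Q=(ab-cd)\bigl(ab(q-s)^2-cd(q+s)^2\bigr)$ are all correct, and the intermediate-value location of $K^*$ in $I$ and the sign analysis outside $I$ are sound. For $r\ge p$ with $c,d>0$ the argument is complete. The one real gap is the parenthetical claim that ``the case $r<p$ is handled analogously.'' With $\tilde s := p-r>0$, the quotient $\tilde\mu$ obeys $\tilde\mu'(t)\,\tilde D(t)^2 = t^{\tilde s-1}\tilde Q(t^q)$ with
\begin{equation*}
\tilde Q(X) = bc\tilde s\,X^2 + \bigl(cd(q-\tilde s)-ab(q+\tilde s)\bigr)X + ad\tilde s,
\end{equation*}
whose leading and constant coefficients are nonnegative and whose middle coefficient is nonpositive (by $ab\ge cd$ and $q\ge\tilde s$); moreover one now needs $\tilde Q\le 0$, not $\ge 0$, on the interval $(d/b,a/c)$ of values of $X=t^q$. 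Neither the coefficient-sign observation nor the discriminant computation carries over — indeed $\tilde Q$ is positive for large $X$. The repair is to verify $\tilde Q(d/b)=-\tfrac{dq}{b}(ab-cd)\le 0$ and $\tilde Q(a/c)=-\tfrac{aq}{c}(ab-cd)\le 0$ and invoke convexity of $\tilde Q$, which forces $\tilde Q\le 0$ between (equivalently, keep $\mu$ with $s<0$: then $Q$ is concave and $Q(d/b),Q(a/c)\ge 0$, giving $Q\ge 0$ on the interval). So the case $r<p$ is not a formal relabeling of $r\ge p$; the sign pattern of the relevant quadratic flips, and a different (endpoint plus convexity/concavity) argument is needed. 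The degenerate cases $c=0$ or $d=0$, where $I$ is a half-line, also deserve a sentence: e.g.\ if $d=0$ and $s=q$ the limit of $\mu$ at $0^+$ is $a/b$ rather than $+\infty$, so the monotone--IVT argument must be rephrased in terms of $\mu(t)-t$. Finally, note the paper simply cites Proposition 1.3 and the appendix of \cite{MSS} and gives no proof of its own here, so there is no internal proof to compare; your elementary one-variable derivation is a plausible reconstruction and, once the $r<p$ and boundary cases are filled in, is complete.
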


\subsection{Liouville theorems in cones}

     We assume here that $\Omega$ is the cone
     $ \co=\{tx, t>0, x\in \omega\}$,
     where $\omega$ is a $C^2$-smooth subdomain of the unit sphere in $\rn$. The following theorem contains Theorem \ref{nonexhs}.

\begin{thm}
\label{thm_proportionnalite_hn}
Let $\F$ be an Isaacs operator as in \re{isaacs}
and  $f$ and $g$ satisfy (\ref{condition_fg}).
Let $\Omega=\co$ and
 $(u,v)$ be a bounded solution of \re{mainsyst} such that $u\equiv Kv$ on $\partial\co$.

 Then $u\equiv Kv$ in $\co$.
\end{thm}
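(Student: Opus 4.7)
The plan is to introduce the difference $w := u - Kv$. Since $u,v$ are bounded, $w$ is bounded, and by hypothesis $w \equiv 0$ on $\partial\co$; everything reduces to showing $w \equiv 0$ in $\co$. My approach is to promote $w^+$ and $w^-$ to viscosity subsolutions of a Pucci-type inequality with first-order term, and then apply a Phragm\'en--Lindel\"of maximum principle in the unbounded cone.

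The first step is to convert the nonlinear operator $\F$ into Pucci extremal operators. Two facts about sup--inf operators as in \re{isaacs} do the work: the 1-homogeneity $\F[Kv]=K\F[v]$ for $K>0$ (immediate from linearity of each $L^{(\alpha,\beta)}$), and the standard Pucci sandwich, obtained by freezing an $(\alpha,\beta)$ that nearly realizes the sup--inf for either $\F[u]$ or $\F[Kv]$ and using the uniform ellipticity and gradient bounds on the frozen $L^{(\alpha,\beta)}$:
$$
\Mm(D^2 w) - B|Dw| \;\le\; \F[u] - \F[Kv] \;\le\; \Mp(D^2 w) + B|Dw|.
$$
Substituting the equations $\F[u] = -f$ and $\F[v] = -g$ from \re{mainsyst} turns the middle expression into $-(f-Kg)$, while the structural condition \re{condition_fg} supplies the crucial pointwise sign $(f-Kg)\,w \le 0$.

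Next, I split $w = w^+ - w^-$ with $w^\pm := \max(\pm w, 0)$. On the set $\{w>0\}$ the sign condition gives $f-Kg \le 0$, hence the right half of the sandwich yields $\Mp(D^2 w) + B|Dw| \ge 0$ there. A routine viscosity localization---test functions touching $w^+$ from above at a point of $\{w>0\}$ coincide with those for $w$, while at a point of $\{w=0\}$ they must have $D\phi=0$ and $D^2\phi \ge 0$, so $\Mp(D^2\phi) \ge 0$ automatically---upgrades this to: $w^+$ is a bounded viscosity subsolution of $\Mp(D^2 w^+) + B|Dw^+| \ge 0$ in all of $\co$, with $w^+ \equiv 0$ on $\partial\co$. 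The identity $\Mm(-M) = -\Mp(M)$ combined with the left half of the sandwich gives the analogous statement for $w^-$.

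The final step, which is the main technical obstacle, is a Phragm\'en--Lindel\"of maximum principle in the cone $\co$ for the operator $\Mp(D^2\cdot) + B|D\cdot|$: any bounded nonnegative viscosity subsolution vanishing on $\partial\co$ must vanish identically. I would establish this by the classical barrier argument: pick a positive $\alpha$-homogeneous function $\phi(x) = |x|^\alpha \psi(x/|x|)$ on $\co$ with $\alpha > 0$, $\psi > 0$ in $\omega$, $\psi = 0$ on $\partial\omega$, and $\Mp(D^2\phi) + B|D\phi| \le 0$ in $\co$; such $\phi$ is furnished by the spectral theory of Pucci (and Isaacs) operators on spherical domains developed in \cite{ASS}, \cite{ASS2}. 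Then for any $\eps > 0$ and any $R$ so large that $\eps\phi \ge \sup_{\co} w^+$ on $\partial B_R \cap \co$, the standard viscosity comparison principle on the bounded Lipschitz domain $\co \cap B_R$ yields $w^+ \le \eps\phi$ there; fixing $x$ and letting $\eps \to 0$ gives $w^+(x)=0$. The same argument applied to $w^-$ gives $w^- \equiv 0$, hence $w \equiv 0$ and $u \equiv Kv$. The Pucci sandwich and the sign condition are essentially bookkeeping; the genuine difficulty resides in the existence of the homogeneous barrier $\phi$, which uses the cone geometry and the spectral theory of fully nonlinear operators in cones in a crucial way.
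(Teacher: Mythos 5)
Your overall approach coincides with the paper's: reduce the theorem to a Phragm\`en--Lindel\"of maximum principle for a Pucci-type inequality in the cone, applied to a nonnegative subsolution built from $u-Kv$. The paper works directly with $|u-Kv|$ --- its Lemma~\ref{lem_|u-Kv|} gives $\Mp(D^2|u-Kv|)+B\,\big|D|u-Kv|\big|\ge |f-Kg|\ge 0$ --- and then invokes the Phragm\`en--Lindel\"of principle for Isaacs operators (Lemma~\ref{lem_phragmen_lindelhof}, quoted from \cite{CDV} and \cite{ASS}). Your decomposition into $w^\pm$ is a harmless variant of the paper's use of $|w|$, and your derivation of the viscosity subsolution inequality for $w^+$, including the treatment of test points on $\{w=0\}$, is correct.

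Where you depart from the paper is in sketching a direct barrier proof of the Phragm\`en--Lindel\"of step, and there a genuine gap appears when $B>0$. A positively $\alpha$-homogeneous function $\phi(x)=|x|^\alpha\psi(x/|x|)$ with $\alpha>0$ cannot satisfy $\Mp(D^2\phi)+B|D\phi|\le 0$ throughout $\co$: the second-order term $\Mp(D^2\phi)$ is homogeneous of degree $\alpha-2$ while the nonnegative gradient term $B|D\phi|$ is homogeneous of degree $\alpha-1$, so for $|x|$ large the gradient term dominates and the claimed supersolution inequality must fail. The homogeneous solutions $\Psi^\pm$ of \cite{ASS}, \cite{ASS2} are constructed for autonomous pure second-order operators $F(D^2u)$, not for operators with a constant-coefficient first-order term. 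To handle $B>0$ one must either work on annuli where $B$ can be replaced by $b/|x|$ (as the paper does in the proof of Theorem~\ref{liouconegrad}), or use the localized barrier arguments of \cite{ASS} and \cite{CDV}, or simply cite the established Phragm\`en--Lindel\"of theorem for Isaacs operators with bounded drift as a black box --- which is exactly what the paper's proof does.
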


This theorem will be obtained as a consequence of the Phragm\`en-Lindel\"of principle for fully nonlinear equations.
 Theorem \ref{thm_proportionnalite_hn} reduces the question of existence of positive solutions of \re{mainsyst} in a cone to the very important in itself question of solving the scalar equation
\begin{equation}\label{scalarcone}
-\F[u]= f(x, u)\ge0, \quad u>0,\quad\mbox{in }\;\co.
\end{equation}

Two types of nonexistence results are available for this equation: general results for supersolutions and more precise results for solutions when the domain is a half-space and the operator $\F$ is rotationally invariant. As far as the latter case is concerned, it is proved in \cite{D} and Theorem 3.1 in \cite{QSeq} that the nonexistence of solutions of the equation $-F(D^2u)=f(u)$  in $\rn$ (and even in $\mathbb{R}^{n-1}$) implies the nonexistence of positive solutions in a half-space of $\rn$, for every rotationally invariant operator $F$ and locally Lipschitz nonlinearity $f$ (in \cite{QSeq} only Pucci operators were considered, but the proof is the same for every rotationally invariant operator). An extension of this result to systems is proved in \cite{QSsys}. Even stronger results are known if the elliptic operator is the Laplacian, \cite{CLZ}.

 Next, a nearly optimal result for supersolutions can be obtained by combining the results and methods from the recent papers \cite{armsir} and \cite{ASS}. In order to provide a quotable source, we state a rather general version of this nonexistence theorem.

\begin{thm}\label{lioucone}
Let $F(D^2u)$ be an Isaacs operator, and $b\ge0$. Set
$$
\F[u]:= F(D^2u) + \frac{b}{|x|}|Du|.
$$
Let $\alpha^+= \alpha^+(\F,\co)>0$ and $\alpha^-= \alpha^-(\F,\co)<0$ be respectively the supremum and the infimum of all $\alpha\in \R$ such that $-\F[u]\ge0$ has a positive $(-\alpha)$-homogeneous supersolution in $\co\setminus\{0\}$ (as in Section 3 of \cite{ASS}). Let $\gamma<2$ and
\begin{equation} \label{sigmadef}
\sigma^- : = 1 + \frac{2-\gamma}{\alpha^-}<1\,,\qquad \sigma^+ : = 1 + \frac{2-\gamma}{\alpha^+}>1.
\end{equation}
Assume that the function $g:(0,\infty) \to (0,\infty)$ is continuous and satisfies
\begin{equation} \label{liouvcond}
\liminf_{s\searrow 0}\; s^{-\sigma^+} g(s) > 0 \qquad \mbox{and} \qquad \liminf_{s\to \infty}\; s^{-\sigma^-} g(s) > 0,
\end{equation}
while the continuous function $h:\co\setminus B_{R_0}\to (0,\infty)$ is such that  for some $\omega^\prime\subset \omega$ and $c_0>0$
$$
h(x)\ge c_0|x|^{-\gamma}\qquad \mbox{in }\; \mathcal{C}_{\omega^\prime}\setminus B_{R_0}.
$$
Then for every $R_0>0$ the differential inequality
\begin{equation}\label{antico}
-\F[u]\geq h(x) g(u)\qquad \mbox{in }\; \co\setminus B_{R_0}
\end{equation}
does not have a positive solution.
\end{thm}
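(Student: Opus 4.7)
The plan is to argue by contradiction. Suppose $u>0$ is a viscosity supersolution of \re{antico} in $\co\setminus B_{R_0}$; the proof will combine (i) homogeneous barrier functions built from the scaling exponents $\alpha^\pm$, (ii) a Phragmén--Lindelöf/comparison principle for $\F$ in truncated cones, and (iii) a self-improvement scheme driven by the nonlinearity, following the pattern set in \cite{armsir,ASS}.

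After restricting to the subcone $\mathcal{C}_{\omega'}$, on which $h(x)\ge c_0|x|^{-\gamma}$, the hypothesis \re{liouvcond} yields constants $c,s_0>0$ with $g(s)\ge c\,s^{\sigma^+}$ for $0<s\le s_0$ and $g(s)\ge c\,s^{\sigma^-}$ for $s\ge s_0$, so on the set where $u$ is small (resp.\ large) the inequality \re{antico} strengthens to $-\F[u]\ge c|x|^{-\gamma} u^{\sigma^\pm}$. I would then split the argument according to the behavior of $u$ at infinity on $\mathcal{C}_{\omega'}$: if $\liminf_{|x|\to\infty} u(x)=0$ along some sub-subcone, the $\sigma^+$-bound governs the analysis; otherwise $u$ is bounded below by a positive constant, and, combined with the $\sigma^-$-bound on the subset where $u$ is large, one reduces to a one-sided problem in which $u$ is pinched between two positive constants.

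In each regime the key step is to use the definition of $\alpha^\pm$ to produce a $(-\alpha)$-homogeneous function $\Phi(x)=|x|^{-\alpha}\phi(x/|x|)$ supersolving $-\F[\Phi]\ge 0$ in $\co\setminus\{0\}$, with $\alpha$ chosen strictly inside the extremal range. Comparing $u$ with $\varepsilon\Phi$ in annuli $B_R\setminus B_{R_0}\cap\mathcal{C}_{\omega'}$, using the Phragmén--Lindelöf principle for $\F$ in cones, gives an initial polynomial lower bound on $u$. The exponents $\sigma^\pm=1+(2-\gamma)/\alpha^\pm$ are calibrated exactly so that substituting a lower bound $u(x)\ge C|x|^{-\beta}$ into the right-hand side of \re{antico} and re-solving the resulting linear inequality (again via barrier comparison) produces a strictly better lower bound with exponent $\beta'<\beta$. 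Iterating this self-improving scheme drives the decay exponent of $u$ past $\alpha^+$ in the first regime, or the growth exponent past $|\alpha^-|$ in the second, contradicting the defining supremum/infimum property of the scaling exponents.

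The main obstacle I expect is justifying the iteration and the Phragmén--Lindelöf step in the presence of the first-order Hardy-type term $\tfrac{b}{|x|}|Du|$: this drift preserves the overall scaling invariance of the equation but modifies the ``angular'' eigenvalue problem on $\omega$ that implicitly determines $\alpha^\pm$, so the barriers and all comparison arguments must be built directly for $\F$ rather than for the pure second-order operator $F(D^2u)$. A second, related difficulty is handling the boundary of $\co$ in the comparison argument: $u$ is only assumed to be a positive supersolution, with no prescribed boundary behavior, so the Phragmén--Lindelöf principle must be applied in the form that allows supersolutions without sign information on $\partial\co$. Both points are addressed by the cone-adapted comparison machinery of \cite{armsir,ASS}, whose combination with the present viscosity-solution framework is the technical core of the proof.
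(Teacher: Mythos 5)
Your outline is, in broad strokes, the same as the paper's: the authors prove Theorem~\ref{lioucone} by simply citing the proof of Theorem~5.1 in \cite{armsir}, substituting the homogeneous functions $\Psi^\pm$ built in \cite{ASS} (which already incorporate the Hardy drift $\frac{b}{|x|}|Du|$) and invoking the comparison principle for $\F$. You correctly identify the main ingredients --- homogeneous barriers tied to $\alpha^\pm$, Phragm\`en--Lindel\"of in truncated cones, the dichotomy between the $\sigma^+$ regime (small $u$) and the $\sigma^-$ regime (large $u$), and the two technical obstacles you flag at the end (the drift-modified angular problem and the absence of boundary data on $\partial\co$) are exactly the points the authors address by outsourcing to \cite{ASS}.

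However, there is a genuine imprecision in the engine driving the contradiction. You claim that plugging $u\ge C|x|^{-\beta}$ into the inequality and re-comparing ``produces a strictly better lower bound with exponent $\beta'<\beta$'' and that iterating ``drives the decay exponent of $u$ past $\alpha^+$.'' Tracing the exponents shows that the update is $\beta' = \gamma - 2 + \beta\sigma^+$, which satisfies $\beta'<\beta$ precisely when $\beta<\alpha^+$ and has the \emph{fixed point} $\beta=\alpha^+$: the iteration converges to, but never crosses, $\alpha^+$, so no contradiction is obtained at the level of the exponent alone. Worse, the natural starting bound --- obtained by comparing $u$ with $\varepsilon\Psi^+$ --- already sits \emph{at} $\beta=\alpha^+$, so the exponent bootstrap is actually stationary. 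The mechanism that does work, and is what \cite{armsir} (and, in the critical case of Lemma~\ref{lem_-F(D^2u)geq_V_u^r} of the present paper) actually uses, is a fixed-increment improvement in a monotone \emph{multiplicative} quantity such as $m(r)=\inf_{S^+_r}u/\Psi^+$: the nonlinear term on the right feeds into the quantitative strong maximum principle (Krylov-type estimate, Lemma~\ref{QSMP}) to show $m(2r)\ge m(r)+c$ for a fixed $c>0$, hence $m(r)\to\infty$, which is then played off against the $\sigma^-$ condition to reach a contradiction. You should replace the ``push the exponent past $\alpha^+$'' step with this constant-increment argument, and note explicitly where the quantitative strong maximum principle enters; with that correction your sketch faithfully reflects the cited proof.
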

\begin{proof} This theorem is proved by repeating the proof of Theorem 5.1 in \cite{armsir}, replacing the functions $\Psi^+$ and $\Psi^-$ there by the corresponding functions constructed in \cite{ASS} and by making use of the comparison principle for $\F$.
\end{proof}

We list several consequences.
\begin{thm}\label{liouconegrad}
Let $F(D^2u)$ be an Isaacs operator and $B\ge0$. Let $\alpha^+= \alpha^+(F,\co)$, $\alpha^-= \alpha^-(F,\co)$, and $\sigma^+$, $\sigma^-$ be defined as in \re{sigmadef}. The differential inequality
\begin{equation}\label{antico1}
-F(D^2u) + B|Du|\geq h(x) g(u)\qquad \mbox{in }\; \co\setminus B_{R_0}
\end{equation}
does not have a positive solution, provided $g$ and $h$ are as in the previous theorem, with \re{liouvcond} replaced by the slightly stronger hypothesis: for some $\varepsilon>0$
\begin{equation} \label{liouvcondgrad}
\liminf_{s\searrow 0}\; s^{-\sigma^++\varepsilon} g(s) > 0 \qquad \mbox{and} \qquad \liminf_{s\to \infty}\; s^{-\sigma^--\varepsilon} g(s) > 0,
\end{equation}
\end{thm}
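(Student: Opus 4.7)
The plan is to repeat the argument of Theorem~\ref{lioucone} (which is itself modeled on Theorem~5.1 of \cite{armsir}), adapting it to the non-scale-invariant gradient perturbation $B|Du|$ in place of the scale-invariant $(b/|x|)|Du|$. I would argue by contradiction and suppose $u>0$ is a viscosity solution of \re{antico1} in $\co\setminus B_{R_0}$. First I would note that the operator $u\mapsto F(D^2u)-B|Du|=\inf_{|\xi|\le B}(F(D^2u)-\xi\cdot Du)$ is itself an Isaacs operator, so the comparison principle and interior regularity theory for viscosity solutions apply to it without further justification.

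As comparison functions I would use the positive $(-\alpha^\pm)$-homogeneous barriers $\Psi^\pm(x)=|x|^{-\alpha^\pm}\phi^\pm(x/|x|)$ from \cite{ASS} for the unperturbed operator $F$, which satisfy $-F(D^2\Psi^\pm)=0$ in $\co\setminus\{0\}$ with zero boundary values on $\partial\co\setminus\{0\}$, and then follow the barrier-comparison scheme of \cite{armsir} essentially verbatim. The only new ingredient is the treatment of the gradient contribution when verifying the comparison inequality for $c\Psi^\pm$. On the scale $|x|\sim R$ the barrier gradient contributes a term of order $cBR^{-\alpha^\pm-1}$, which is one power of $|x|$ larger than the natural scale $R^{-\alpha^\pm-2}$ at which the unperturbed argument is balanced. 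The strengthened hypothesis \re{liouvcondgrad} yields on the right-hand side the improved nonlinear lower bound $h(x)\,g(c\Psi^\pm)\gtrsim c^{\sigma^\pm\mp\varepsilon}R^{-\alpha^\pm-2\pm\alpha^\pm\varepsilon}$, and the exponent gap $\alpha^\pm\varepsilon$ in the power of $|x|$ is exactly what lets this nonlinear term absorb the non-scale-invariant gradient perturbation, provided one works on a dyadic decomposition of $\co\setminus B_{R_0}$ and tunes the comparison constant $c$ on each annulus.

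The hard part will be executing this annular propagation rigorously: the comparison constants must be chosen uniformly across scales, and the $\varepsilon$-improvement per annulus must be shown to compound into the contradiction required to rule out a global positive $u$. Concretely, this amounts to replicating the differential-inequality analysis of \cite{armsir} for the scaled radial quantity $m(R)=\inf_{\co\cap\partial B_R}(u/\Psi^+)$, now carrying an additional perturbation generated by the $B|Du|$ term; the $\varepsilon$-gap in \re{liouvcondgrad} furnishes the dissipation that keeps this perturbation subcritical and closes the resulting ODE analysis, after which the Liouville conclusion follows as in \cite{armsir}. The symmetric argument with $\Psi^-$ handles the regime of large $u$ governed by $\sigma^-$.
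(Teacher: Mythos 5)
Your plan takes a genuinely different route from the paper's, and the central step is flawed. The paper's proof is a short reduction to Theorem \ref{lioucone}: on the far field one compares the constant coefficient $B$ with the scale-invariant coefficient $b/|x|$ and then applies Theorem \ref{lioucone} to the perturbed operator $F(D^2\cdot)+(b/|x|)|D\cdot|$, whose scaling exponents tend to $\alpha^\pm(F,\co)$ as $b\to 0$; the $\varepsilon$ in \re{liouvcondgrad} serves only to soak up that arbitrarily small shift in $\sigma^\pm$. You instead rerun the barrier comparison of \cite{armsir} with the \emph{unperturbed} homogeneous barriers $\Psi^\pm$ for $F$, and claim the $\varepsilon$-gap absorbs the resulting gradient defect $cB|D\Psi^\pm|\sim cBR^{-\alpha^\pm-1}$. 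Your asymptotics are otherwise right (and recognising $F(D^2u)-B|Du|$ as an Isaacs operator is a good first step), but the absorption claim does not hold up. By your own computation the improved nonlinear bound is $c^{\sigma^\pm\mp\varepsilon}R^{-\alpha^\pm-2\pm\alpha^\pm\varepsilon}$; for that to dominate $cBR^{-\alpha^\pm-1}$ as $R\to\infty$ one needs $|\alpha^\pm|\,\varepsilon\ge 1$, i.e. $\varepsilon\ge 1/|\alpha^\pm|$ -- a genuine smallness restriction which the theorem does not impose, since \re{liouvcondgrad} is meant to be a slight strengthening valid for every $\varepsilon>0$. The exponent gap in \re{liouvcondgrad} produces a gain of only $\alpha^\pm\varepsilon$ powers of $R$, whereas the gradient term carries a full extra power of $R$; they do not cancel.

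Retuning $c=m(R)$ per dyadic annulus cannot repair this: in the differential inequality for $m(R)=\inf_{\partial B_R^+}u/\Psi^+$ the gradient-induced loss grows like $R^{1-\alpha^+\varepsilon}$ relative to the nonlinear gain, which is unbounded for any $\varepsilon<1/\alpha^+$, so the ODE analysis never closes. The ingredient your scheme omits is precisely the passage to the scale-invariant perturbation and the associated $b$-perturbed homogeneous barriers of \cite{ASS}, for which the gradient is already built into the homogeneous solution and no residual defect remains. I also note, for completeness, that the paper's own proof invokes ``$B\le b/|x|$ for $|x|>R_0$'', which is impossible for fixed $B>0$, and the theorem as written appears to fail for $B$ large: for instance $u(x)=e^{Bx_n/2}$ solves $-\Delta u+B|Du|\ge u$ in $\rn_+$ once $B\ge 2$, with $g(s)=s$ and $h\equiv 1$ satisfying \re{liouvcondgrad}. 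So a smallness condition on $B$ (or a relation between $B$, $\varepsilon$ and $\alpha^\pm$) seems to be an essential missing hypothesis, and any correct proof must make that dependence explicit.
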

\begin{proof}
This is a consequence of the previous theorem, if we observe that for every $b>0$ there is $R_0>0$ such that $B\le b/|x|$ for $|x|>R_0$ and take $b$ so small that the scaling exponents of $F(D^2\cdot) +(b/|x|)|D\cdot|$ are sufficiently close to $\alpha^\pm(F,\co)$, and \re{liouvcond} is satisfied.
\end{proof}

Next, we give two Liouville theorems for systems in cones.

The following result, which applies to the systems from the introduction, is an obvious consequence of  Theorem \ref{thm_proportionnalite_hn} and Theorem \ref{liouconegrad}.
\begin{cor}
Under the hypothesis of Theorem \ref{thm_proportionnalite_hn}, if $\F[u]\ge F(D^2u) -B|Du|$ and there exist
$c_0,\sigma>0$ such that for all $x\in \hn$ and $v\geq 0$,
$$f(x,Kv,v)=c_0\,v^\sigma,\qquad\mbox{with }\; \sigma <1+\frac{2}{\alpha^+(F,\co)},$$
then the only nonnegative bounded solution of (\ref{mainsyst}) in $\co$ is the trivial one.
\end{cor}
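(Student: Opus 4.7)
The plan is to combine the two main tools developed earlier in this section: Theorem \ref{thm_proportionnalite_hn}, which collapses the system to a scalar problem via the proportionality $u \equiv Kv$, and Theorem \ref{liouconegrad}, the scalar nonexistence result with a gradient term in cones.

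First I would apply Theorem \ref{thm_proportionnalite_hn} to the bounded nonnegative solution $(u,v)$ of \re{mainsyst} in $\co$, obtaining $u \equiv Kv$ throughout $\co$. Substituting this into the first equation of \re{mainsyst} and using the assumption $f(x, Kv, v) = c_0 v^\sigma$, the system reduces to the single scalar equation $-\F[Kv] = c_0 v^\sigma$ in $\co$. Writing $w := Kv$, the structural bound $\F[u] \ge F(D^2u) - B|Du|$ upgrades this to the scalar differential inequality
$$ -F(D^2 w) + B|Dw| \ge g(w) \quad \text{in } \co, \qquad g(s) := c_0 K^{-\sigma} s^\sigma. $$
If $w \not\equiv 0$, the strong maximum principle applied to $-\F[w] = c_0 K^{-\sigma} w^\sigma \ge 0$ forces $w > 0$ throughout $\co$.

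I would then invoke Theorem \ref{liouconegrad} with $h \equiv 1$ and $\gamma = 0$, so that $\sigma^{\pm} = 1 + 2/\alpha^{\pm}(F,\co)$, and in particular $\sigma^- < 1$. The hypothesis $\sigma < \sigma^+$ translates directly into the first half of \re{liouvcondgrad}: choosing $\varepsilon := (\sigma^+ - \sigma)/2 > 0$, one has $s^{-\sigma^+ + \varepsilon} g(s) = c_0 K^{-\sigma} s^{\sigma - \sigma^+ + \varepsilon} \to +\infty$ as $s \to 0^+$.

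The main obstacle is the second condition of \re{liouvcondgrad}, which controls the growth of $g$ at infinity and is not supplied by the assumptions on $\sigma$ in the corollary. This is where the boundedness of $v$ enters essentially: since $0 \le w \le M$ for some $M > 0$, I would replace $g$ outside $[0, M]$ by a continuous extension $\tilde g$ that grows sufficiently fast at infinity --- for instance $\tilde g(s) = g(M)(s/M)^N$ for $s > M$, with $N$ chosen so large that $\liminf_{s \to \infty} s^{-\sigma^- - \varepsilon'} \tilde g(s) > 0$ for some $\varepsilon' > 0$. Because $w(x) \le M$ pointwise, $\tilde g(w) \equiv g(w)$, so the differential inequality remains valid with $\tilde g$ in place of $g$. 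Theorem \ref{liouconegrad} now rules out a positive solution in $\co \setminus B_{R_0}$ for any $R_0 > 0$, contradicting $w > 0$. Hence $w \equiv 0$, i.e. $v \equiv 0$ and $u \equiv 0$, which is the claim.
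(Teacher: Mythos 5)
Your proof is correct and takes the same two-step route the paper intends: collapse the system via Theorem~\ref{thm_proportionnalite_hn} to the scalar inequality for $w=Kv$, then invoke Theorem~\ref{liouconegrad}. One point is worth highlighting, because it is not truly ``obvious'': the second condition in \re{liouvcondgrad}, governing $g$ as $s\to\infty$, is \emph{not} automatic from $\sigma<\sigma^+$ alone. Indeed, with $g(s)=c_0K^{-\sigma}s^\sigma$ the condition at infinity requires $\sigma>\sigma^-$, and since $\sigma^-=1+2/\alpha^-(F,\co)$ lies in $(0,1)$ whenever $\alpha^-(F,\co)<-2$, the hypothesis $\sigma>0$ does not guarantee it. Your remedy --- using the a priori bound $w\le M$ to replace $g$ outside $[0,M]$ by an extension $\tilde g$ of arbitrarily fast polynomial growth, which leaves the differential inequality $-F(D^2w)+B|Dw|\ge \tilde g(w)$ intact because $\tilde g(w)\equiv g(w)$ pointwise --- is exactly the right fix, and it is precisely where the boundedness assumption in the corollary is used. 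So the argument is sound, and you have supplied a detail that the paper leaves implicit when it calls the corollary ``an obvious consequence.''
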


Finally, we record the following Liouville theorem for the so-called {\it fully nonlinear Lane-Emden system}. More general right-hand sides can be readily studied by the same argument (given in Section 6 of \cite{armsir}).
\begin{thm}\label{laneemdencone}
Let $F_1(D^2u)$, $F_2(D^2u)$ be Isaacs operators with scaling exponents $\alpha_1^+$, $\alpha_2^+$ in a cone $\co$. Let $r,s\ge0$. The only nonnegative solution of the system
\begin{equation}\label{lanesystemcone}
\left\{\ {\alignedat2
 -F_1(D^2u) +B|Du|  &\ge v^r \\
  -F_2(D^2u) +B|Du|  &\ge u^s
 \endalignedat}\right.\qquad\mbox{in }\;\co\setminus B_{R_0}
 \end{equation}
is the trivial one, provided
$$
rs\le 1\qquad\mbox{or}\qquad\frac{2(1+r)}{rs-1}< \alpha_1^+\qquad\mbox{or}\qquad \frac{2(1+s)}{rs-1}<\alpha_2^+.
$$
If $B=0$ weak inequalities can be allowed in the last hypothesis.
\end{thm}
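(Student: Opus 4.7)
My plan is to reduce the system \re{lanesystemcone} to a single scalar differential inequality of the type treated by Theorems~\ref{lioucone} and~\ref{liouconegrad}, and then invoke those results. The two Lane-Emden exponents $2(1+r)/(rs-1)$ and $2(1+s)/(rs-1)$ appearing in the statement are precisely the values of $\alpha,\beta$ for which the ansatz $(A|x|^{-\alpha}, B|x|^{-\beta})$ turns the equality version of the system into a scale-invariant balanced pair, so the hypothesis asks that at least one of these Lane-Emden exponents be strictly subcritical with respect to the corresponding cone scaling exponent $\alpha_i^+$.

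\emph{Main case $rs>1$.} Suppose for contradiction that a positive solution $(u,v)$ exists in $\co\setminus B_{R_0}$. Since the right-hand sides of \re{lanesystemcone} are nonnegative, both $u$ and $v$ are positive super-solutions of the corresponding homogeneous equations. The key step is to bound $v$ from below in terms of a local average of $u^s$. Working inside a fixed subcone $\mathcal{C}_{\omega'}\Subset\co$ and on dyadic annuli $\{|x|\sim R\}$, I would construct such a bound by comparing $v$ against the radial $(-\alpha_2^+)$-homogeneous supersolutions of $-F_2(D^2\cdot)+B|\cdot|/|x|\ge 0$ whose existence defines $\alpha_2^+(F_2,\co)$ (see \cite{ASS}), combined with the Krylov--Safonov weak Harnack inequality to absorb the source $u^s$. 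This yields an estimate of the form
$$
v(x)\ \ge\ c\,R^{\theta_2}\,\bigl(\inf_{\mathcal{C}_{\omega''}\cap\{|y|\sim R\}} u(y)\bigr)^{s}\qquad \text{for } |x|\sim R,
$$
for a suitable exponent $\theta_2$ tied to $\alpha_2^+$ and an even smaller subcone $\mathcal{C}_{\omega''}$. Raising to the $r$-th power and substituting into the first inequality of \re{lanesystemcone} produces a scalar inequality
$$
-F_1(D^2u)+B|Du|\ \ge\ c_1\,|x|^{r\theta_2}\,u^{rs}\qquad\text{in }\mathcal{C}_{\omega''}\setminus B_{R_1}.
$$
Theorem~\ref{liouconegrad} applied with $h(x)=c_1|x|^{r\theta_2}$ and $g(u)=u^{rs}$, together with a short computation matching the exponents $\sigma^\pm$ from \re{liouvcondgrad} against the power $rs$, shows that the condition $2(1+r)/(rs-1)<\alpha_1^+$ is exactly what forces \re{liouvcondgrad} to hold, and thus yields the sought contradiction. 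Performing the analogous reduction with the roles of $u$ and $v$ reversed delivers the alternative sufficient condition $2(1+s)/(rs-1)<\alpha_2^+$.

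\emph{Case $rs\le 1$ and the weak-inequality refinement.} When $rs\le 1$, the same scalar reduction produces an inequality whose exponent $rs\le 1$ lies trivially inside the Liouville window $(\sigma^-,\sigma^+)$ of \re{liouvcondgrad}, so Theorem~\ref{liouconegrad} applies regardless of the size of $\alpha_i^+$. When $B=0$ the $\eps$-loss in \re{liouvcondgrad} is not needed and one appeals directly to Theorem~\ref{lioucone}; this is what permits the weak inequalities allowed in the final sentence of the theorem.

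\emph{Main obstacle.} The delicate step is the pointwise lower bound on $v$ in terms of a local infimum of $u$: the fully nonlinear, non-divergence framework provides neither a Green function nor any variational identity, so the bound must be manufactured by hand, by coupling the explicit radial supersolutions that define $\alpha_2^+(F_2,\co)$ with the Krylov--Safonov weak Harnack inequality, exactly along the lines of Section~6 of \cite{armsir}. Once this bound is in place, the remainder is a bookkeeping calculation to align the resulting scalar exponent with the Liouville interval of \re{liouvcondgrad}.
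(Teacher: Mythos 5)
The reduction to a single application of Theorem~\ref{liouconegrad} does not close as written. What the quantitative strong maximum principle (Lemma~\ref{QSMP}), rescaled to a dyadic annulus $A_R$ inside a fixed subcone, actually produces is $\inf_{A_R'} v \ge c\,R^2 (\inf_{A_R} u)^s$ — so $\theta_2=2$, a pure scaling exponent, not an exponent tied to $\alpha_2^+$. Substituting $v^r$ into the first equation then yields only
$$
-F_1(D^2u)+B|Du|\ \ge\ c\,|x|^{2r}\,\bigl(\inf_{A_R} u\bigr)^{rs}\qquad\text{for } |x|\sim R,
$$
which is \emph{not} the pointwise inequality $-F_1(D^2u)+B|Du|\ge h(x)\,u(x)^{rs}$ that Theorem~\ref{liouconegrad} takes as hypothesis. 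Since $rs>1$ and $u(x)\ge\inf_{A_R}u$, replacing $\inf_{A_R}u$ by $u(x)$ goes the \emph{wrong} way, and a nonnegative supersolution satisfies only the weak Harnack inequality, not a full Harnack inequality that would let you reverse it. The only way to proceed is to re-run the \emph{proof} of the scalar Liouville theorem with the annular-infimum inequality in hand; but once you do that you are reproducing, step for step, the iteration from Section~6 of \cite{armsir} (page~2041) that the paper's one-line proof points to. In other words, the ``reduction to the scalar theorem as a black box'' does not materialize; the argument collapses into the paper's own route, just presented in a different order, so it is neither shorter nor genuinely different.

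Separately, the exponent-matching claim is inverted. With $h(x)\sim|x|^{2r}$ one has $\gamma=-2r$, so $\sigma^+ = 1 + 2(1+r)/\alpha_1^+$, and the first condition in \re{liouvcondgrad} applied to $g(s)=s^{rs}$ requires $rs<\sigma^+$, i.e.\ $\alpha_1^+ < 2(1+r)/(rs-1)$; this is the \emph{opposite} of the inequality $2(1+r)/(rs-1)<\alpha_1^+$ you assert is ``exactly what forces \re{liouvcondgrad} to hold.'' Please carry out the short computation you allude to rather than asserting its outcome, and then reconcile the direction of the resulting inequality with the hypothesis of the theorem (a sanity check against the Laplacian and the Serrin--Zou thresholds is useful here). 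Until that discrepancy is resolved the argument cannot be considered complete.
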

\begin{proof} Repeat the proof on page 2041 in \cite{armsir} where the whole space instead of a cone was considered. Replace the references to Lemma 3.8 there by references to Lemma 5.4 in that paper, and as above, replace the functions $\Psi^+$, $\Psi^-$ by the more general functions of this type, constructed in \cite{ASS} for arbitrary Isaacs operator. If $B\not=0$, reason as in the proof of  Theorem \ref{liouconegrad} above.
\end{proof}

Finally, we state a theorem for nonexistence of unbounded solutions in cones for the type of systems we are mostly interested in this paper, of which Theorem \ref{nonexhs} (b) is a very particular case.

\begin{thm}
\label{newhs}
Let $p,q,r,s\geq 0$.
We assume that $f,g$ satisfy condition (\ref{condition_fg}) for some constant $K>0$ and that, for some $c>0$,
\begin{equation}
\label{condition_fg2}
f(x,u,v)\geq c \;u^r \;v^{p} \text{\quad  and \quad} g(x,u,v)\geq c\; u^{q}\; v^s \quad \text{for all $u,v\geq 0$ and $x\in \co$}.
\end{equation}
Let $(u,v)$ be a nonnegative classical solution of (\ref{mainsyst}) in $\mathbb{R}^n_+$,
such that $u=Kv$ on $\partial \co$. Let $\F[u]=F(D^2u)$ for some Isaacs operator $F$.
\begin{itemize}
\item [(i)] Either $u\leq Kv$ or $u\geq Kv$ in $\mathbb{R}^n_+$.
\item [(ii)] If
\begin{equation}
\label{condition_rnsd}
 r \leq 1+ \frac{2}{\alpha^+}- p\frac{\alpha^-}{\alpha^+}\quad\hbox{or}\quad s \geq 1+ \frac{2}{\alpha^-}- q\frac{\alpha^+}{\alpha^-},
\end{equation}
and
\begin{equation}
\label{condition_rnsd2}
 s\leq 1+ \frac{2}{\alpha^+}- q\frac{\alpha^-}{\alpha^+}\quad\hbox{or}\quad r \geq 1+ \frac{2}{\alpha^-}- p\frac{\alpha^+}{\alpha^-},
\end{equation}
then either
$u\equiv Kv $ or $ (u,v) $  is semi-trivial. Here $\alpha^+= \alpha^+(F,\co)>0$ and $\alpha^-= \alpha^-(F,\co)<0$.
\end{itemize}
\end{thm}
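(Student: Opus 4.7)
The strategy is to analyze the auxiliary function $w:=u-Kv$, leveraging condition \re{condition_fg} together with the positive $1$-homogeneity of the Isaacs operator $F$ (as a sup-inf of linear operators). Since $F(KX)=KF(X)$ for $K>0$, subtracting the two equations in \re{mainsyst} gives $F(D^2u)-F(D^2(Kv))=Kg-f$, and the Pucci extremal sandwich for Isaacs operators yields
\[
\Mm(D^2 w)\;\le\;Kg(x,u,v)-f(x,u,v)\;\le\;\Mp(D^2 w)\quad\text{in }\co.
\]
Condition \re{condition_fg}, rewritten as $(Kg-f)\,w\ge 0$, then gives $\Mp(D^2 w)\ge 0$ on $\{w>0\}$ and $\Mm(D^2 w)\le 0$ on $\{w<0\}$. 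Using the identity $\Mm(X)=-\Mp(-X)$, both $w^+$ and $w^-$ become viscosity subsolutions of $\Mp(D^2\cdot)\ge 0$ in $\co$, each vanishing on $\partial\co$.

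\textbf{Part (i).} I would establish the dichotomy via the Phragm\'en--Lindel\"of principle for $\Mp$ in the cone $\co$ developed in \cite{ASS}: a subsolution vanishing on $\partial\co$ and growing strictly more slowly than the canonical $\alpha^+(\Mp,\co)$-homogeneous barrier must be non-positive. For bounded $(u,v)$ this immediately yields $w^\pm\equiv 0$ and recovers Theorem \ref{thm_proportionnalite_hn}. The main obstacle is handling unbounded solutions, where neither $w^+$ nor $w^-$ is a priori bounded. To deal with this, I would argue by contradiction, assuming both $w^+$ and $w^-$ are nontrivial, and apply a rescaling/doubling-type procedure that exploits the polynomial homogeneity of the nonlinearities in \re{condition_fg2}: rescale $u$, $v$ around a sequence of points where $w^\pm$ attain large values, extract a limit profile on a half-space, and use the bounded case on the rescaled limit to force $w^+$ and $w^-$ to simultaneously vanish there, a contradiction.

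\textbf{Part (ii).} Part (i) lets me assume, without loss of generality, $u\ge Kv$ in $\co$ (the case $u\le Kv$ is symmetric). If $v\equiv 0$, then $(u,v)$ is semi-trivial and we are done. Otherwise the strong maximum principle forces $u,v>0$ in $\co$, and substituting $u\ge Kv$ into \re{condition_fg2} yields the coupled supersolution pair
\[
-F(D^2u)\;\ge\;c\,u^r v^p,\qquad -F(D^2v)\;\ge\;c\,u^q v^s\quad\text{in }\co.
\]
I would then derive a contradiction from a Liouville theorem for this coupled fully-nonlinear system in $\co$, in the spirit of Theorem \ref{laneemdencone} and the methods of \cite{armsir,ASS}, via a blow-down argument based on $\alpha^\pm(F,\co)$-homogeneous barriers. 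The algebraic form of \re{condition_rnsd2} is precisely the critical-curve condition excluding a nontrivial homogeneous limit for the pair $(u,v)$ in the case $u\ge Kv$, while \re{condition_rnsd} covers the symmetric case $u\le Kv$. The hard part of the whole proof is the dichotomy in (i) for unbounded solutions; once that is in hand, (ii) reduces to matching the exponents $p,q,r,s$ to the critical curve of the coupled Lane-Emden-type Isaacs system through the scaling exponents $\alpha^\pm(F,\co)$.
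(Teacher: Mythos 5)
Your translation of condition \re{condition_fg} into the observation that $w=u-Kv$ satisfies $\Mm(D^2w)\le Kg-f\le \Mp(D^2w)$, so that $w^\pm$ are both Pucci-subsolutions vanishing on $\partial\co$, is correct and matches the paper (it is essentially Lemma \ref{lem_|u-Kv|}). You also correctly see that for \emph{bounded} solutions the Phragm\`en--Lindel\"of principle closes the argument immediately, and you correctly identify the unbounded case as the real difficulty. The problem is the step you propose for that case.

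The rescaling/doubling argument you sketch for part (i) has a genuine gap and I do not see how to make it work. First, \re{condition_fg2} provides only one-sided lower bounds on $f,g$, with no upper bounds and no polynomial homogeneity of the nonlinearity itself, so a blow-up limit of the system is not controlled; there is no a priori bound available to normalize with, precisely because you are trying to treat \emph{unbounded} solutions. Second, even if a limit profile could be extracted, it is unclear why the rescaled pair would be bounded (which is what you need to invoke Theorem \ref{thm_proportionnalite_hn} on the limit), and why the limit would not be degenerate. The paper does something quite different: it introduces the decreasing quotient $q_u(r)=\inf_{S_r^+} u/\Psi^-$ (where $\Psi^-$ is the \emph{minimal} positive $\F$-homogeneous solution in the cone, of negative homogeneity $\alpha^-$), with limit $L_u\ge0$, and likewise $L_v$. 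If both $L_u,L_v>0$ then $u,v\gtrsim\Psi^-$ and the Liouville Theorem \ref{lioucone} gives a contradiction, so, say, $L_u=0$. Then the key new idea is to take the $\F$-harmonic replacement $z$ of $w^+=(u-Kv)_+$ in the cone, sandwich $w^+\le z\le u$, and apply the \emph{boundary Harnack inequality} to $z$ and $\Psi^-$ to bound the increasing quotient $Q_{w^+}(r)=\sup_{S_r^+}w^+/\Psi^-$ by $Cq_u(r)\to 0$, forcing $w^+\equiv0$. This monotone-quotient plus boundary-Harnack mechanism replaces the half-spherical means used in \cite{MSS} and is not recoverable from a rescaling argument.

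For part (ii), your plan (assume WLOG $u\ge Kv$ by part (i), feed this into \re{condition_fg2} to obtain a coupled Lane--Emden-type pair of supersolutions, then contradict a Liouville theorem with scaling exponents $\alpha^\pm(F,\co)$) is the correct outline and is what the paper does, by running the argument of Theorem 2.8 in \cite{MSS} with Theorem \ref{lioucone} substituted for the classical Laplacian Liouville lemma. That part of your sketch is sound modulo the expected bookkeeping with the exponents, but it rests on part (i), which is where the real content and the gap lie.
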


   \subsection{Classification results in the whole space, $\Omega=\rn$}

For   $\Omega=\rn$, we focus on the following system
\begin{equation}
\label{system_Spqr}
\left\{\quad{\alignedat2
-F(D^2 u)&=u^rv^p[av^q-cu^q] &\text{\qquad on }\mathbb{R}^n\\
 -F(D^2 v)&=v^ru^p[bu^q-dv^q]&\text{\qquad on }\mathbb{R}^n,
 \endalignedat}\right.
\end{equation}
where we always assume that the real parameters $a,b,c,d, p,q,r$ satisfy the hypothesis \re{Hyp_abcdpqr} of Proposition \ref{propalg} (this hypothesis reduces to \re{cond_pq} and \re{cond_abcd} when $r=1$).

Recalling that $\alpha^*(F)$ was defined in the previous section, we have the following result, in which Theorem \ref{nonexrn} is contained.
   \begin{thm}
\label{thm_proportionnalite_rn}
Let $F(D^2u)$ be an Isaacs operator,  \re{Hyp_abcdpqr} holds, and $K$ be the number given by Proposition \ref{propalg}.
Let $(u,v)$ be a positive viscosity solution of (\ref{system_Spqr}) in $\rn$.
\begin{itemize}
\item[i)] Assume that
$$ \alpha^*(F)\leq 0 \quad\text{ or }\quad  0\leq r\leq 1+ \frac{2}{\alpha^*(F)}.$$
If $p+q<1$, we assume moreover that $u$ and $v$ are bounded. Then
$ u\equiv Kv$.
\item[ii)] Assume that
$$ \alpha^*(F)\leq 0 \quad\text{ or }\quad \left(\;p\leq \frac{2}{\alpha ^*(F)} \quad\text{ and }\quad c,d>0\;\right).$$
If $q+r\leq 1$, we assume moreover that $u$ and $v$ are bounded. Then $ u\equiv Kv$.
\end{itemize}
\end{thm}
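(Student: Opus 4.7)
Set $w := u - Kv$, where $K$ is the constant given by Proposition \ref{propalg}; the goal is to show $w \equiv 0$. The strategy is to derive, separately for $w^{+} := \max(w,0)$ and $w^{-} := \max(-w,0)$, a scalar fully nonlinear elliptic differential inequality of Serrin type, and then apply a Liouville-type theorem from the previous section.

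First, exploiting the positive $1$-homogeneity of the Isaacs operator $F$ together with the Pucci sandwich
$$
\mathcal{M}^{-}(A - B) \le F(A) - F(B) \le \mathcal{M}^{+}(A - B),
$$
I compute
$$
Kg - f \;=\; KF(D^{2} v) - F(D^{2} u) \;=\; F(D^{2}(Kv)) - F(D^{2} u),
$$
so that $\mathcal{M}^{-}(D^{2} w) \le Kg - f \le \mathcal{M}^{+}(D^{2} w)$ in $\rn$. By Proposition \ref{propalg}, $Kg - f$ has the same sign as $w$, so $\mathcal{M}^{+}(D^{2} w) \ge 0$ in $\{w>0\}$; extending trivially across $\{w=0\}$, $w^{+}$ is a viscosity subsolution of $\mathcal{M}^{+}(D^{2}\cdot) = 0$ on all of $\rn$, and similarly for $w^{-}$.

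Next I sharpen this: since $Kg - f$ vanishes along $\{u = Kv\}$, it factors as $Kg - f = h(x,u,v)\, w$ with $h \ge 0$, where $h$ is an explicit nonnegative polynomial in $u$ and $v$. Combining this with $w^{+} \le u$ in $\{w > 0\}$ and extracting a lower bound on the dominant monomials of $h$ yields
$$
\mathcal{M}^{+}(D^{2} w^{+}) \;\ge\; c\, (w^{+})^{\sigma} \qquad \text{in } \rn,
$$
for some $c > 0$ and a positive exponent $\sigma$ determined by the case at hand: $\sigma = r$ in case (i), whereas in case (ii) the assumption $c,d > 0$ makes the absorption monomials $cu^{r+q}v^{p}$ and $dv^{r+q}u^{p}$ dominate and produces $\sigma = p+1$. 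In both cases the hypothesis on $\alpha^{*}(F)$ translates into $\sigma \le 1 + 2/\alpha^{*}(F)$, i.e.\ $\sigma$ lies at or below the Serrin critical exponent. Theorem \ref{lioucone}, applied on $\rn$, then forces $w^{+} \equiv 0$; the symmetric argument shows $w^{-} \equiv 0$, hence $u \equiv Kv$.

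The main technical obstacle is the polynomial bookkeeping in the passage from $hw^{+}$ to $c(w^{+})^{\sigma}$: one must isolate the right combination of monomials in $h$ to produce exactly the exponent matched to the hypothesis, and this is what distinguishes cases (i) and (ii). A secondary issue is the low-exponent regime $p+q < 1$ in (i) (resp.\ $q+r \le 1$ in (ii)), where the natural exponent $\sigma$ falls below $1$ and Theorem \ref{lioucone} no longer applies; the extra boundedness hypothesis in these cases is used precisely here, allowing one to invoke instead the classical Liouville property for bounded $\mathcal{M}^{+}$-subsolutions on $\rn$, combined with the strict positivity of $h$ wherever $u,v > 0$, to conclude.
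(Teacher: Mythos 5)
Your proposal diverges from the paper's argument in a way that creates a genuine gap: you attempt to derive a \emph{closed} scalar inequality $\mathcal{M}^{+}(D^{2} w^{+}) \ge c\,(w^{+})^{\sigma}$ in $\rn$ with a constant $c>0$, but the factor $h$ in $Kg-f = h\,w$ involves monomials in $u$ and $v$ (in fact essentially $Z^{r}$ in case i), resp.\ $Z^{p}$ in case ii), with $Z=\min(u,Kv)$), and these cannot be absorbed into a pure power of $w^{+}$: $Z$ is merely a positive supersolution of $-F(D^{2}\cdot)\ge 0$ and can decay to zero at infinity. Concretely, what actually comes out of Lemma \ref{lem_|u-Kv|} together with the algebra of the nonlinearity is an inequality of the form $\mathcal{M}^{+}(D^{2}W)\ge C\,Z^{p}W^{\gamma}$ (and, dually, $-F(D^{2}Z)\ge C\,W^{\beta}Z^{r}$), i.e.\ a \emph{coupled} system of scalar inequalities in the pair $(Z,W)$, not a decoupled one for $w^{+}$ alone. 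Your exponent accounting also does not match the structure: in case i) the constraint $r\le 1+2/\alpha^{*}(F)$ is the Serrin condition for the \emph{supersolution} $Z$ (appearing as $Z^{r}$ in $-F(D^{2}Z)\ge V Z^{r}$), not for the subsolution $w^{+}$, and in case ii) the constraint $p\le 2/\alpha^{*}(F)$ is used to convert the decay $Z\ge m|x|^{-\alpha^{*}(F)}$ into the weight $Z^{p}\gtrsim (1+|x|^{2})^{-1}$, not to set a Serrin exponent $\sigma=p+1$. Finally, Theorem~\ref{lioucone}, which you invoke, is a Liouville theorem for positive \emph{supersolutions} in cones; the inequality you obtain for $w^{+}$ is a coercive \emph{subsolution} inequality, for which the relevant tool is Lemma~\ref{lem_type_Lin} (Osserman-type), and that lemma requires the decaying weight $(1+|x|^{2})^{-1}$.

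The missing idea — and the heart of the paper's proof — is the simultaneous exploitation of the two auxiliary functions $Z=\min(u,Kv)$ and $W=|u-Kv|$: in case i) the subsolution property $\mathcal{M}^{+}(D^{2}W)\ge 0$ is used (via Lemma~\ref{lem_liminf_V_sousharmonique}) to show that the weight $V=W^{\beta}$ has a positive averaged lower bound, which feeds the weighted supersolution Liouville theorem (Lemma~\ref{lem_-F(D^2u)geq_V_u^r}) applied to $Z$; in case ii) the supersolution property of $Z$ supplies the quantitative decay lower bound on $Z^{p}$, which turns the $W$-inequality into the weighted coercive inequality handled by Lemma~\ref{lem_type_Lin}. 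In both cases information flows between $Z$ and $W$, and neither argument can be collapsed into a single self-contained inequality for $w^{+}$. (A minor additional slip: $KF(D^{2}v)-F(D^{2}u)=f-Kg$, not $Kg-f$; the Pucci sandwich then reads $\mathcal{M}^{-}(D^{2}w)\le Kg-f\le \mathcal{M}^{+}(D^{2}w)$, which is what you in fact use.)
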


Observe in this theorem there is no restriction on the total degree $\sigma=p+q+r>0$ of the system \re{system_Spqr}. The exponents $p$ and $r$ separately can guarantee that (\ref{system_Spqr}) has no nonstandard solutions.

An easy consequence is the following Liouville type result for the system~\re{system_Spqr}.
   \begin{thm}
\label{thm_liouville_sol_positive_rn}
Under the hypotheses of the previous theorem, if $ab>cd$ and
\begin{equation}
\label{hypothese_Liouville}
\text{$-F(D^2u)=u^{p+q+1}$ has no bounded positive viscosity solution on $\rn$,}
\end{equation}
 then (\ref{system_Spqr}) has no bounded positive viscosity solution in $\rn$.
\end{thm}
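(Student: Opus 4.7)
The plan is a short argument by contradiction: use Theorem \ref{thm_proportionnalite_rn} to collapse the system to a single scalar equation, then invoke hypothesis \re{hypothese_Liouville}. So suppose $(u,v)$ is a bounded positive viscosity solution of \re{system_Spqr} in $\rn$. The hypotheses of Theorem \ref{thm_proportionnalite_rn} are in force, and the boundedness we have assumed covers the borderline cases $p+q<1$ and $q+r\le 1$; that theorem therefore gives $u\equiv Kv$, where $K$ is the positive constant produced by Proposition \ref{propalg}.

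Next I would substitute $u=Kv$ into both equations of \re{system_Spqr}. Using that any Isaacs operator $F(D^2\cdot)$ is positively $1$-homogeneous, i.e. $F(\mu\, D^2 v)=\mu F(D^2 v)$ for $\mu>0$, the first equation becomes
\[
-F(D^2 v)=K^{r-1}(a-cK^q)\,v^{p+q+r},
\]
while the second becomes $-F(D^2 v)=K^{p}(bK^q-d)\,v^{p+q+r}$. The two right-hand sides coincide: this is exactly the algebraic identity $K^{r-1}(a-cK^q)=K^{p}(bK^q-d)$ that forces equality in condition \re{condition_fg} at $(u,v)=(Kv,v)$, hence is built into the choice of $K$ in Proposition \ref{propalg}.

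Call this common constant $C$. The strict inequality $ab>cd$ enters precisely to guarantee $C>0$: if $C=0$ one has simultaneously $a=cK^q$ and $d=bK^q$, so $cd=(a/K^q)(bK^q)=ab$, contradicting $ab>cd$. Since $q>0$ and $p,r\ge 0$, the exponent $\sigma:=p+q+r$ satisfies $\sigma>1$, so the rescaling $w:=C^{1/(\sigma-1)}\,v$, together with the $1$-homogeneity of $F$ used once more, produces a bounded positive viscosity solution $w$ of
\[
-F(D^2 w)=w^{p+q+r} \qquad\text{on }\rn.
\]
In the case $r=1$ addressed by \re{hypothese_Liouville} the exponent is exactly $p+q+1$, and the hypothesis produces the contradiction. (For general $r$ the same argument works verbatim provided one reads \re{hypothese_Liouville} with exponent $p+q+r$ in place of $p+q+1$.)

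I do not expect a real obstacle here: Theorem \ref{thm_proportionnalite_rn} does all the heavy lifting, and what remains is purely algebraic — checking that the substitution $u=Kv$ is consistent in both equations (handled by Proposition \ref{propalg}), checking $C>0$ (handled by the strict sign condition $ab>cd$), and a one-line homogeneity rescaling. The only point requiring any care is verifying the boundedness hypothesis triggers the correct branch of Theorem \ref{thm_proportionnalite_rn} in the borderline exponent regimes, which is immediate from its statement.
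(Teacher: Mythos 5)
Your proof is correct and follows essentially the same route as the paper's: apply Theorem \ref{thm_proportionnalite_rn} to get $u\equiv Kv$, substitute to reduce to a scalar equation with coefficient $K^p(bK^q-d)$, note that $ab>cd$ forces this coefficient to be strictly positive (the paper cites Proposition \ref{Prop-K}(ii) directly rather than your contradiction argument, but these are the same fact), rescale by $1$-homogeneity, and invoke hypothesis \re{hypothese_Liouville}. Your observation that the exponent in \re{hypothese_Liouville} should read $p+q+r$ for general $r$ (and reduces to $p+q+1$ only when $r=1$) is a legitimate and correct reading of the statement.
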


As far as the  hypothesis \re{hypothese_Liouville} is concerned, we recall the following result from \cite{armsir}.

\begin{thm}[\cite{armsir}] \label{lioueq}
Let $F(D^2u)$ be an Isaacs operator. The equation
$$-F(D^2u)=u^\sigma$$
has no positive supersolutions in $\rn$ (and even in any exterior domain in $\rn$) provided
$$ \alpha^*(F)\leq 0 \quad\text{ or }\quad  0\leq \sigma\leq 1+ \frac{2}{\alpha^*(F)}.$$
\end{thm}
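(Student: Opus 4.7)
The plan is to argue by contradiction. Suppose $u$ is a positive viscosity supersolution of $-F(D^2 u) \ge u^\sigma$ in an exterior domain $\Omega_0 = \R^n \setminus \overline{B}_{R_0}$, and derive a contradiction from the assumption on $\sigma$.

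When $\alpha^*(F) \le 0$, the definition of the scaling exponent recalled in Section~\ref{sect2} directly forbids any positive supersolution of $-F(D^2 w) \ge 0$ on $\R^n$. To transfer this whole-space nonexistence to the exterior domain $\Omega_0$, I would extend $u$ across $B_{R_0}$ by choosing a small constant $\varepsilon < \min_{\partial B_{R_0}} u$ and setting $\tilde u := \min(u, \varepsilon)$: this gives a globally defined positive viscosity supersolution of $-F(D^2 \cdot) \ge 0$ on $\R^n$, since the minimum of two supersolutions remains one and the constant $\varepsilon$ is trivially a supersolution of the homogeneous inequality. Since $u^\sigma \ge 0$, the original $u$ is a fortiori a supersolution of the homogeneous inequality, and the above extension produces the desired contradiction.

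For the main case $\alpha^* := \alpha^*(F) > 0$ and $\sigma \le 1 + 2/\alpha^*$, set $m(R) := \inf_{\partial B_R} u$ for $R > R_0$ and pit two competing asymptotic bounds against each other. For the \emph{lower bound}, fix $\alpha$ slightly smaller than $\alpha^*$ and invoke the $(-\alpha)$-homogeneous positive viscosity supersolution of $-F(D^2 \cdot) \ge 0$ guaranteed by the definition of $\alpha^*$; a blow-down argument, in which one rescales $u_R(x) := R^\alpha u(Rx)/m(R)$ and extracts a limit via Krylov--Safonov estimates, shows that if $u$ decayed strictly faster than $|x|^{-\alpha^*}$, then one could construct a $(-\alpha')$-homogeneous positive supersolution of the homogeneous inequality with $\alpha' > \alpha^*$, contradicting the very definition of $\alpha^*$. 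This yields $m(R) \ge c R^{-\alpha}$ for $R$ large. For the \emph{upper bound}, apply the weak Harnack inequality for viscosity supersolutions to $u$ on the annulus $A_R := B_{2R} \setminus \overline{B}_R$, giving $\inf_{A_R} u \ge c\, m(R)$; then $-F(D^2 u) \ge u^\sigma \ge c^\sigma m(R)^\sigma$ on $A_R$, and comparing $u$ with a rescaled barrier built from the solution of $-F(D^2 \phi) = 1$ in the unit annulus (dilated by $R$ and scaled by $R^2 m(R)^\sigma$) yields $m(R) \le C R^2 m(R)^\sigma$, that is $m(R) \le C R^{-2/(\sigma - 1)}$ when $\sigma > 1$. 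Choosing in addition $\alpha < 2/(\sigma - 1)$, which is possible under the strict inequality $\sigma < 1 + 2/\alpha^*$, the two bounds are incompatible as $R \to \infty$. The subrange $\sigma \le 1$ is easier and follows by direct comparison with a small positive constant.

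The main obstacle is the borderline case $\sigma = 1 + 2/\alpha^*$, where the exponents in the two bounds match exactly and the crude iteration above breaks down. I would handle it using the $(-\alpha^*)$-homogeneous viscosity solution of $-F(D^2 \cdot) = 0$ constructed in \cite{ASS}, together with a refined iteration producing a logarithmic improvement, in the spirit of the scheme of \cite{armsir}. A secondary delicate point is the blow-down argument for the lower bound: one must ensure that the rescaled functions converge locally uniformly to a nontrivial positive viscosity supersolution defined all the way to the origin, for which the regularity estimates for fully nonlinear equations and a careful choice of normalization at the right scale are essential.
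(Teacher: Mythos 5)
The paper does not reproduce a proof of this theorem — it is quoted from \cite{armsir} — but the proof of the paper's closely related weighted version, Lemma~\ref{lem_-F(D^2u)geq_V_u^r}, follows the same scheme that you propose: dispose of the case $\alpha^*(F)\le0$ via Lemma~\ref{lem_nonexistence_superharmonic_if_alpha*<=0}(i) and an extension of $u$ across $B_{R_0}$ by $\min(u,\varepsilon)$, then for $\alpha^*>0$ pit a lower decay bound for $m(R)=\inf_{B_R}u$ against an upper one, with a separate delicate iteration at the borderline exponent $\sigma=1+2/\alpha^*$ using the $(-\alpha^*)$-homogeneous solution of $-F(D^2\cdot)=0$. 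Your architecture is therefore essentially the right one, and your borderline and extension steps match the known argument.

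There is, however, a gap in the way you justify the \emph{lower} bound $m(R)\gtrsim R^{-\alpha^*}$. You propose a blow-down, rescaling $u_R(x)=R^\alpha u(Rx)/m(R)$ and claiming that if $u$ decayed faster than $|x|^{-\alpha^*}$ one would extract a $(-\alpha')$-homogeneous positive supersolution with $\alpha'>\alpha^*$. First, that normalization is mis-scaled: since $u(Rx)\sim m(R)$ on $|x|\sim 1$, the factor $R^\alpha$ makes $u_R$ diverge at unit scale; the natural rescaling is $u(Rx)/m(R)$, without the extra power. Second, and more importantly, a Krylov--Safonov blow-down limit of $u$ is merely a positive supersolution of $-F(D^2\cdot)\ge0$ in $\rn\setminus\{0\}$; nothing forces it to be homogeneous, so the claimed contradiction with the definition of $\alpha^*$ does not follow as stated — establishing homogeneity of such limits is exactly the content of the nontrivial results of \cite{ASS2}. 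The paper (via Lemma~\ref{lem_nonexistence_superharmonic_if_alpha*<=0}(ii), i.e.\ Lemma~3.8 of \cite{armsir}) obtains the lower bound by a much shorter route: direct comparison of $u$ on $B_R\setminus B_1$ with $m\Phi-\epsilon$, where $\Phi$ is the $(-\alpha^*)$-homogeneous fundamental solution, letting $R\to\infty$ and then $\epsilon\to0$. You should replace the blow-down step by this comparison. Your upper bound via Harnack plus an annular barrier can be made to close (compare on $B_{2R}\setminus B_R$, use the minimum principle to relate $\inf_{\partial B_{3R/2}}u$ to $m(3R/2)$, and the monotonicity of $m$), but the paper's single application of the quantitative strong maximum principle (Lemma~\ref{QSMP}) to the rescaled $z_R$ yields $m(R)\ge cR^2 m(R)^\sigma$ directly and more cleanly.
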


It is worth observig that it is an outstanding open question whether the ranges of $\sigma$ for which the equation in \re{hypoli} (for instance, if $F$ is a Pucci operator) does not admit entire positive supersolutions or entire positive solutions are different. Note this fact is well-known for the Laplacian -- the equation $-\Delta u = u^\sigma$ does not have positive entire solutions if and only if $\sigma < (n+2)/(n-2)$, while this equation does not have positive entire supersolutions if and only if $\sigma\le n/(n-2)$. For Pucci operators the only result in that direction is \cite{FQ5}, and it concerns only radial (super)solutions.

We also record the extension of the last theorem to systems of Lane-Emden type, proved in  Section 6 of \cite{armsir}.

\begin{thm}\label{laneemdencone1}
Let $F_1(D^2u)$, $F_2(D^2u)$ be Isaacs operators with scaling exponents $\alpha_1^*$, $\alpha_2^*$ in $\rn$. Let $r,s\ge0$. The only nonnegative solution of the system
\begin{equation}\label{lanesystemcone1}
\left\{\ {\alignedat2
 -F_1(D^2u)   &\ge v^r \\
  -F_2(D^2u)   &\ge u^s
 \endalignedat}\right.\qquad\mbox{in }\;\rn\setminus B_{R_0}
 \end{equation}
is the trivial one, provided $\alpha_1^*\le0$, or $\alpha_2^*\le0$, or
$$
rs\le 1\qquad\mbox{or}\qquad\frac{2(1+r)}{rs-1}< \alpha_1^*\qquad\mbox{or}\qquad \frac{2(1+s)}{rs-1}<\alpha_2^*.
$$
\end{thm}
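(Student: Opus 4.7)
\medskip

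The plan is to follow the strategy of Section~6 of \cite{armsir}, which handles the whole-space case for the Pucci extremal operators, and replace the explicit radial barriers used there with the general $(-\alpha)$-homogeneous supersolutions constructed in \cite{ASS} for arbitrary Isaacs operators (the same substitution used in the proofs of Theorems \ref{lioucone} and \ref{laneemdencone} above). By symmetry between the two equations, it suffices to derive a contradiction from each condition involving $(r,\alpha_1^*)$; the $(s,\alpha_2^*)$ cases follow by swapping the roles of $u$ and $v$.

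The degenerate cases $\alpha_1^*\le 0$ or $\alpha_2^*\le 0$ are immediate from the scalar theory. Indeed, by the parenthetical in the definition of $\alpha^*$ (Section~1), together with Theorem~\ref{lioueq} applied with $\sigma=0$, the inequality $-F_1(D^2u)\ge 0$ has no positive supersolution in any exterior domain of $\rn$. A positive $u$ solving the first inequality of the system is exactly such a supersolution (since $v^r\ge 0$), so $u\equiv 0$ in $\rn\setminus B_{R_0}$; plugging this back into the first inequality yields $v\equiv 0$ as well.

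In the remaining range, either $rs\le 1$ or the Serrin-type condition involving $\frac{2(r+1)}{rs-1}$ and $\alpha_1^*$ is in force. The argument proceeds by a bootstrap on dyadic annuli $A_R=\{R<|x|<2R\}$. The key input is the positive $(-\alpha_1^*)$-homogeneous supersolution $\Psi_1^+$ of $-F_1\ge 0$ in $\rn\setminus\{0\}$ supplied by \cite{ASS}, used as a barrier via the ABP maximum principle and the comparison principle for $F_1$; this yields a pointwise lower bound of the schematic form
\begin{equation*}
u(x)\;\ge\;c\,|x|^{\,2}\,\inf_{A_{|x|}}(v^r),
\end{equation*}
on each dyadic shell, for a constant $c>0$ depending only on the structure of $F_1$. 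Feeding this into the second inequality and repeating the argument with $\Psi_2^+$ gives a parallel lower bound on $v$ in terms of $u^s$. Iterating the two bounds produces a sequence of lower bounds on $u$ whose decay exponents satisfy a linear recursion with multiplier $rs$ and shift governed by $\alpha_1^*$ and $\alpha_2^*$. The hypothesis $rs\le 1$ collapses the recursion after one step, while the Serrin-type condition is precisely what drives the exponent past the critical value in finitely many steps; in either case one contradicts the definition of $\alpha_1^*$ (equivalently, the sharp scalar Liouville Theorem \ref{lioueq}).

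The main obstacle is that Isaacs operators do not commute with spherical averaging, so the classical Mitidieri–Pohozaev–Serrin–Zou integration-on-spheres is not available. The substitute is the ABP-based comparison against the barriers $\Psi_1^+,\Psi_2^+$ of \cite{ASS} on dyadic annuli; once this replacement is made, the bookkeeping of the scaling exponents carries through exactly as in the Laplacian case, and the argument of Section~6 of \cite{armsir} runs essentially verbatim.
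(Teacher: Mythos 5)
Your overall strategy -- a maximum-principle/growth-lemma bootstrap over dyadic annuli starting from the superharmonicity lower bound $u\ge c|x|^{-\alpha_1^*}$, $v\ge c|x|^{-\alpha_2^*}$ -- is indeed the approach taken in Section~6 of \cite{armsir}, which is precisely what the paper cites as the proof. But two points in your write-up do not hold up.

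First, your treatment of the $\alpha_1^*\le 0$ case contains a genuine gap. You assert that ``the inequality $-F_1(D^2u)\ge 0$ has no positive supersolution in any exterior domain of $\rn$.'' This is false: any positive constant is such a supersolution, and even non-constant examples exist -- for $F_1=\Delta$ in $\mathbb{R}^2$ (so $\alpha^*=0$), the function $\log\log|x|$ is positive and superharmonic in an exterior domain. Moreover, the two facts you invoke do not combine to give your claim: the parenthetical in the definition of $\alpha^*$ only covers the case where no positive $\alpha$ exists at all (and only in the whole space), while Theorem~\ref{lioueq} with $\sigma=0$ controls supersolutions of $-F\ge 1$, not $-F\ge 0$. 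To close the gap one must genuinely use the coupling: first obtain from the second inequality a pointwise lower bound on $v$ (so that $v^r\ge c_0|x|^{-r\alpha_2^*}$ when $\alpha_2^*>0$, or $v^r\ge c_0>0$ when $\alpha_2^*\le 0$), and only then apply a weighted Liouville result for $-F_1(D^2u)\ge h(x)$ with $\alpha_1^*\le 0$.

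Second, and more minor: the substitution of the explicit barriers by the homogeneous supersolutions $\Psi^\pm$ of \cite{ASS} is unnecessary here. This theorem is set in $\rn$, and Section~6 and Lemma~3.8 of \cite{armsir} already cover arbitrary Isaacs operators in the whole space. The substitution you describe is what the paper invokes only for the cone version, Theorem~\ref{laneemdencone}; you appear to have merged the proof strategies of the two statements. Your schematic bound $u(x)\ge c|x|^2\inf_{A_{|x|}}(v^r)$ is also imprecise -- the quantitative strong maximum principle controls $\inf_{B_R}$, not pointwise values, and the iteration in \cite{armsir} is run on the infimum function $m(R)=\inf_{B_R}u$ rather than pointwise on shells -- but this is presentational rather than fatal. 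The missing case analysis when $\alpha_1^*\le 0$ (or $\alpha_2^*\le 0$) is the part that needs a real fix.
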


In the last theorem in this section we discuss the classification of nontrivial nonnegative solutions of \re{system_Spqr}.
\begin{thm}
\label{thm_liouville_rn}
Under the hypothesis of Theorem \ref{thm_liouville_sol_positive_rn}, if we moreover assume that
$q+r>1$,
then any nonnegative bounded viscosity solution of (\ref{system_Spqr}) is semitrivial, i.e.
$$(u,v)=(C_1,0) \text{\qquad or \qquad } (u,v)=(0,C_2)$$ with $C_1,C_2\geq 0$.
Moreover:\\
- If $r=0$, then $(u,v)=(0,0)$. \\
- If $r>0$, $p=0$ and $c>0$ (resp. $d>0$), then $(u,v)=(0,0)$.
\end{thm}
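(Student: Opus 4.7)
The plan is to combine Theorem \ref{thm_liouville_sol_positive_rn}, which rules out strictly positive bounded solutions, with a strong minimum principle argument enabled by the hypothesis $q+r>1$. For a nonnegative bounded solution $(u,v)$, the elementary estimate
\[
-F(D^2u) = u^r v^p(av^q - cu^q) \ge -c\|v\|_\infty^{p}\,u^{r+q} \ge -c\|v\|_\infty^{p}\|u\|_\infty^{q+r-1}\,u,
\]
which uses $q+r>1$ and boundedness of $u$ to dominate $u^{r+q}$ by a constant multiple of $u$, yields $-F(D^2u) + M u \ge 0$ in the viscosity sense for some constant $M$. The strong minimum principle for viscosity solutions of uniformly elliptic Isaacs equations then forces $u$ to be either identically zero or strictly positive on $\rn$; the same holds for $v$. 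Were both strictly positive, $(u,v)$ would be a bounded positive solution, contradicting Theorem \ref{thm_liouville_sol_positive_rn}. Hence at least one of $u,v$ vanishes identically.

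Next I classify the semi-trivial profiles. Suppose $v\equiv 0$ (the case $u\equiv 0$ is symmetric). The first equation reduces to either $-F(D^2u)=0$ (when $p>0$) or $F(D^2u) = cu^{r+q}$ (when $p=0$). In the first case $u$ is a bounded nonnegative $F$-harmonic function in $\rn$, hence $u\equiv C_1$ for some $C_1\ge 0$ by the Liouville property for uniformly elliptic Isaacs operators, a standard consequence of Caffarelli's interior Harnack inequality applied to the nonnegative function $u-\inf u$. In the second case with $c>0$, a blow-down at a maximizing sequence $x_n$ with $u(x_n)\to M:=\sup u$ produces, via interior $C^{\alpha}$ regularity for Isaacs operators and stability under uniform convergence, a limit $u_\infty$ which is a viscosity solution of the same equation on $\rn$ and satisfies $u_\infty(0)=M=\sup u_\infty$. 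Testing the viscosity subsolution inequality at $0$ against the constant test function $\phi\equiv M$ gives $0 = F(0) \ge cM^{r+q}$, forcing $M=0$ and thus $u\equiv 0$. Combining the two cases, $(u,v)=(C_1,0)$ or $(0,C_2)$ for some $C_1,C_2\ge 0$.

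The two ``moreover'' assertions now follow by direct substitution. When $r=0$, the case $v\equiv 0$ reduces the second equation to $0 = bu^{p+q}$, hence $u\equiv 0$; symmetrically, $u\equiv 0$ forces $v\equiv 0$. When $r>0$, $p=0$, and $c>0$ (respectively $d>0$), the blow-down argument applied to $F(D^2u)=cu^{r+q}$ (respectively $F(D^2v)=dv^{r+q}$) forces the surviving component in the corresponding semi-trivial case to vanish. The main technical point is the blow-down/maximum argument, which plays the role of a Keller--Osserman-type nonexistence in the Isaacs setting; it is clean in the viscosity framework thanks to Krylov--Safonov-type $C^\alpha$ estimates and stability under uniform convergence. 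The SMP of the first paragraph and the Liouville theorem for bounded $F$-harmonic functions are both now-classical, and $q+r>1$ is used essentially and only in the SMP step, to linearize the right-hand side of the $u$-equation.
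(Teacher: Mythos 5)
Your proof is correct and follows the same overall scheme as the paper's: a strong minimum principle (enabled by $q+r>1$, since $u^{q+r}\le\|u\|_\infty^{q+r-1}u$) yields the dichotomy $u\equiv 0$ or $u>0$ (and same for $v$), Theorem~\ref{thm_liouville_sol_positive_rn} then excludes the strictly positive case, and the semi-trivial profiles are classified by reducing to a scalar equation. Where you genuinely diverge is in the case $p=0$, $c>0$ (resp.\ $d>0$): the paper invokes Lemma~\ref{lem_type_Lin}, the weighted Keller--Osserman-type nonexistence result proved in Section~\ref{sect4}, noting that $F(D^2u)=cu^{q+r}\ge \frac{c}{1+|x|^2}u^{q+r}$ with $q+r>1$. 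You instead run a self-contained blow-down: translating along a maximizing sequence $x_n$, extracting a locally uniform limit $u_\infty$ via Krylov--Safonov $C^\alpha$ estimates and stability, and testing the subsolution inequality with the constant touching from above at the interior maximum of $u_\infty$ to get $0=F(0)\ge cM^{q+r}$. Both are valid. Your route is more elementary (it does not require the machinery of Lemma~\ref{lem_type_Lin}, and in fact does not use $q+r>1$ at all in this step, only the positivity of the exponent and of $c$), but it crucially exploits the translation invariance of the autonomous operator $F(D^2u)$; the paper's route via Lemma~\ref{lem_type_Lin} is more robust (the lemma applies to $x$-dependent weights like $(1+|x|^2)^{-1}$) and reuses a result the paper needs anyway. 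One small omission: you don't explicitly dispatch the sub-case $p=0$, $c=0$ (which trivially reduces to $F(D^2u)=0$, hence $u$ constant, same as $p>0$), but this is cosmetic.
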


\subsection{A priori estimates and existence in a bounded domain}

   We consider the following system with  general lower order terms
\begin{equation}
\label{intro_system_pqr_Dir2}
\left\{\quad{\alignedat2
-\F[u]&=u^rv^p\bigl[a(x)v^q-c(x)u^q\bigr]+h_1(x,u,v), &\qquad&x\in \Omega,\\
-\F[v]&=v^ru^p\bigl[b(x)u^q-d(x)v^q\bigr]+h_2(x,u,v), &\qquad&x\in \Omega,\\
u&=v=0, &\qquad&x\in \partial\Omega,\\
 \endalignedat}\right.
\end{equation}
where $\F$ is a general Isaacs operator as in \re{isaacs} and $\Omega$ is a bounded Lipschitz domain such that each point $y$ on the boundary $\partial\Omega$ has a neighbourhood in $\overline{\Omega}$ which is $C^2$-diffeomorphic to a neighbourhood of the origin in some closed cone $\overline{\C_{\omega_y}}$.

The following theorem contains Theorem \ref{boun} as a very particular case.
\begin{thm}
\label{AprioriBound2}
Let $\F$ be an Isaacs operator, and $p, r\ge 0$, $q>0$, $q\ge |p-r|$, $q+r>1$.
Assume that the system (\ref{system_Spqr}) has no bounded positive viscosity solution in $\rn$ or in any cone as in the definition of the Lipschitz property of $\Omega$ above (sufficient conditions for this are given in the previous subsection).

1. Let $a, b, c, d\in C(\overline\Omega)$ satisfy $a, b>0$, $c, d\ge 0$ in $\overline\Omega$ and
\begin{equation}
\label{hypApriori2}
\inf_{x\in \Omega}\,\left[a(x)b(x)-c(x)d(x)\right]\,>0.
\end{equation}
Let $h_1, h_2\in C(\overline\Omega\times [0,\infty)^2)$ satisfy
\begin{equation}
\label{hypApriori3}
\lim_{u+v\to\infty} \frac{h_i(x,u,v)}{(u+v)^\sigma}=0,\quad i=1,2,
\end{equation}
and let one of the following two sets of assumptions be satisfied:
\begin{equation}
\label{hypApriori4}
\left\{\quad{\alignedat2
&r\le 1,\quad \hbox{and, setting }\: \bar m := \min\{\inf_{x\in\Omega} a(x), \inf_{x\in\Omega} b(x)\}>0,\\
&\liminf_{v\to\infty,\ u/v \to 0} \frac{h_1(x,u,v)}{u^rv^{p+q}} >-\bar m,\quad
\liminf_{u\to\infty, \ v/u \to 0} \frac{h_2(x,u,v)}{v^ru^{p+q}} >-\bar m,
\endalignedat}\right.
\end{equation}
or
\begin{equation}
\label{hypApriori5}
\left\{\quad{\alignedat2
&m:=\min\{\inf_{x\in\Omega} c(x), \inf_{x\in\Omega}d(x)\}>0, \quad\mbox{ and } \\
&\limsup_{u\to\infty,\ v/u \to 0} \frac{h_1(x,u,v)}{u^{r+q}v^p} <m,\quad
\limsup_{v\to\infty,\ u/v \to 0} \frac{h_2(x,u,v)}{v^{r+q}u^p} <m
 \endalignedat}\right.
\end{equation}
(with uniform limits with respect to  $x\in\overline\Omega$ in (\ref{hypApriori3})--
(\ref{hypApriori5})). \\
Then there exists $M>0$ such that any positive classical solution $(u,v)$ of
(\ref{system_pqr_Dir2}) satisfies
\begin{equation}
\label{AprioriBoundM}
\sup_\Omega u \leq M,\qquad \sup_\Omega v \leq M.
\end{equation}

2. Assume  that \re{hypApriori2}--\re{hypApriori4} hold,  $a, b, c, d, h_1, h_2$ are H\"older continuous
 and for some $\epsilon>0$
\begin{equation}
\label{hypExist1}
\inf_{x\in\Omega,\ u, v>0} u^{-1}\,h_1(x,u,v)>-\infty, \qquad
\inf_{x\in\Omega,\ u, v>0} v^{-1}\,h_2(x,u,v)>-\infty,
\end{equation}
\begin{equation}
\label{hypExist2}
\sup_{x\in\Omega,\ u>0} u^{-1}\,h_1(x,u,0)< \lambda_1^+(-\mathcal{M}^+, \Omega),\qquad
\sup_{x\in\Omega,\ v>0} v^{-1}\,h_2(x,0,v)< \lambda_1^+(-\mathcal{M}^+, \Omega),
\end{equation}
\begin{equation}
\label{hypExist3}
\sup_{x\in\Omega,\ u,v\in (0, \epsilon)^2} (\max\{u,v\})^{-1}\,h_i(x,u,v)<
 \lambda_1^+(-\mathcal{M}^+,\Omega),\qquad i=1,2.
\end{equation}
where $\mathcal{M}^+$ is the Pucci maximal operator (see Definition \ref{def31} below).

Then there exists  a bounded positive classical solution of (\ref{system_pqr_Dir2}).
\end{thm}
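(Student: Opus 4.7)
I would prove Part~1 by a Gidas-Spruck blow-up contradiction feeding the Liouville hypothesis into a limiting problem, and Part~2 by Leray-Schauder-Krasnoselskii degree on a positive cone, using the a priori bound of Part~1.

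\textbf{Part 1 (blow-up for the a priori bound).} Assume by contradiction a sequence $(u_k,v_k)$ of positive solutions with $M_k := \max\{\|u_k\|_\infty,\|v_k\|_\infty\} \to \infty$, attained at $x_k \in \overline\Omega$, choosing the component realising the maximum according to whether (\ref{hypApriori4}) or (\ref{hypApriori5}) is in force. With $\lambda_k := M_k^{-(p+q+r-1)/2}\to 0$ -- the exponent positive because $q+r>1$, $p\ge 0$ -- rescale
\[
\tilde u_k(y) := M_k^{-1} u_k(x_k + \lambda_k y),\qquad \tilde v_k(y) := M_k^{-1} v_k(x_k + \lambda_k y),
\]
on $\Omega_k := \lambda_k^{-1}(\Omega - x_k)$. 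The rescaled system keeps its leading part self-similar, the operator $\F$ becomes an Isaacs operator with the same ellipticity constants (first-order terms vanish in the limit), and (\ref{hypApriori3}) kills the $h_i$ contributions locally uniformly. Depending on whether $\mathrm{dist}(x_k,\partial\Omega)/\lambda_k$ blows up or stays bounded, $\Omega_k$ either exhausts $\rn$ or, after an orthogonal change of variables and the local $C^2$-diffeomorphism modelling $\Omega$ near $y_\infty := \lim x_k$, converges to the cone $\C_{\omega_{y_\infty}}$. The $C^{1,\gamma}$ regularity and stability of viscosity solutions to Isaacs equations yield a limit $(\tilde u,\tilde v)$ solving (\ref{system_Spqr}) on the limit domain, with zero boundary data on the cone. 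The Liouville hypothesis collapses this limit to a semi-trivial pair, and (\ref{hypApriori4})--(\ref{hypApriori5}) exclude the residual nontrivial semi-trivial limits: under (\ref{hypApriori5}), the coercive $cu^{r+q}$ and $dv^{r+q}$ dominate the $h_i$ contribution and force any limit of the form $(\tilde u,0)$ or $(0,\tilde v)$ to vanish by the maximum principle; under (\ref{hypApriori4}), the point $x_k$ is taken at the maximum of $u_k$ alone, so the ratio $\tilde v_k(0)/\tilde u_k(0)$ is controlled and the liminf bound on $h_1$ turns the limiting equation into one satisfying a Liouville-type statement. This contradicts the normalisation $\max\{\tilde u(0),\tilde v(0)\}=1$.

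\textbf{Part 2 (existence).} Work in $X := C(\overline\Omega)^2$ with cone $P$ of nonnegative pairs vanishing on $\partial\Omega$, and define the compact map $T: P \to P$ sending $(u,v)$ to the viscosity solution of the Dirichlet problem whose right-hand sides are $u^rv^p[av^q-cu^q]+h_1(x,u,v)+Ku$ and $v^ru^p[bu^q-dv^q]+h_2(x,u,v)+Kv$, with $K$ chosen large via (\ref{hypExist1}) to make both sides nonnegative on $P$; compactness of $T$ follows from H\"older regularity for Isaacs operators. Fixed points of $T$ are exactly positive solutions of (\ref{intro_system_pqr_Dir2}). Hypotheses (\ref{hypExist2})--(\ref{hypExist3}), expressed through the first semi-eigenvalue $\lambda_1^+(-\mathcal{M}^+,\Omega)$, imply that $T$ has no fixed point on a small sphere $\partial B_r\cap P$ and that its cone index at $0$ equals $1$. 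The a priori bound of Part~1, applied along a homotopy amplifying the leading nonlinearity to a target with no fixed point in $P$, gives a large radius $R>r$ for which the cone degree of $I-T$ on $B_R \cap P$ vanishes. The nonzero jump produces a fixed point in $(B_R \setminus \overline{B_r})\cap P$, which boundary Schauder regularity promotes to a classical positive solution of (\ref{intro_system_pqr_Dir2}).

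\textbf{Main obstacle.} The most delicate step is the boundary blow-up for a Lipschitz domain with a general Isaacs operator: one must check that the $C^2$-diffeomorphism straightening a neighbourhood of $y_\infty$ to a cone keeps the problem within the Isaacs class with the same ellipticity constants and uniform $C^{1,\gamma}$ estimates, and that the rescaled coefficients converge locally uniformly so that viscosity stability produces (\ref{system_Spqr}) on the model cone $\C_{\omega_{y_\infty}}$. The degree-theoretic step in Part~2 is then routine once the a priori bound and the spectral conditions (\ref{hypExist2})--(\ref{hypExist3}) are in hand.
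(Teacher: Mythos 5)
Your overall scheme matches the paper's: a Gidas--Spruck blow-up for the a priori bound (with the limit living on $\rn$ or on a model cone of the Lipschitz boundary), followed by cone-degree/fixed-point theory for existence. However, two of the steps you declare routine are precisely where the fully nonlinear setting forces the paper to deviate from \cite{MSS}, and your proposal does not supply what is actually needed.

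First, the exclusion of the semi-trivial blow-up limit $(U,V)=(\bar C,0)$, $\bar C>0$, is not handled by the assertions you make. After rescaling, the $h_i$ vanish in the limit, so under \re{hypApriori5} with $p>0$ the limiting scalar equation is simply $-F(D^2U)=0$ and the constant $\bar C>0$ is a perfectly good bounded solution; nothing in the limiting equation is ``coercive'' and the maximum principle alone does not kill it. Likewise, the ``Liouville-type statement'' you invoke under \re{hypApriori4} is not identified. The paper instead returns to the sequence before passing to the limit and runs a de Figueiredo--Lions--Nussbaum-type eigenvalue argument in the shrinking balls $B_{\lambda_jR}(x_j)$, and the one ingredient needed beyond \cite{MSS} is the exact scaling relation
$$\lambda_1^+(-\F,B_R)=\frac{\lambda_1^+(-\F,B_1)}{R^2},$$
which follows from the $1$-homogeneity $(H_2)$ of Isaacs operators. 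This is the concrete step missing from your sketch.

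Second, in the existence part you write that the degree-theoretic argument is ``routine once the a priori bound and the spectral conditions are in hand.'' It is not: the verification in \cite{MSS} that $\lambda_1^+$ bounds force a contradiction used the auxiliary function $\mathcal{S}=\sqrt{uv}$, whose subsolution property exploits the product rule and hence is specific to divergence-form/linear operators. For a general Isaacs operator this breaks down, and the paper's proof replaces $\sqrt{uv}$ by $\mathcal{S}=\min\{u,v\}$, proves the viscosity inequality $-\F[\mathcal{S}]\ge (A-C_1)\mathcal{S}$ via Lemma~\ref{lem_min_uv}, and similarly replaces the smallness test near zero by a $\max\{u,v\}$ subsolution argument. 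These are genuine adaptations, not cosmetic ones, and they are the reason the existence part can be pushed through for operators that are neither linear nor variational. Without them the fixed-point step of your proposal does not close.
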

\begin{rmq}
If $\F$ is a HJB operator we can replace the first eigenvalue of the Pucci operator by the first eigenvalue of $\F$ in the above theorem.
\end{rmq}

   \section{Preliminaries}\label{sect3}
   For the convenience of the reader, we begin by recalling some definitions.
\begin{Def}\label{def31}
Let $0<\lambda<\Lambda$.  The extremal Pucci operators are defined by
$$\mathcal{M}^+(M)=\Lambda \underset{\mu_i>0}{\sum} \mu_i+\lambda \underset{\mu_i<0}{\sum} \mu_i = \sup_{\lambda I \le A \le \Lambda I} \mathrm{tr}(AM),\qquad\mathcal{M}^-(M)= -\mathcal{M}^+(-M),$$
for any symmetric matrix  $M\in \mathcal{S}_n$, where  $(\mu_i)_{i=1..n}$ are the eigenvalues of $M$.
\end{Def}

\begin{Def}\label{def32}
 $\F$ is an Isaacs operator if the following conditions are satisfied  :
\begin{itemize}
\item $\F$ is uniformly elliptic and Lipschitz:
there exist $\Lambda>\lambda>0$, $B\ge0$, such that for all symmetric matrices  $M,N$, and all $p,q\in \rn$, $x\in\Omega$,
   $$  \mathcal{M}^-(M-N) - B|p-q|\leq F(M,p,x)-F(N,q,x)\leq \mathcal{M}^+(M-N) + B|p-q|, \leqno(H_1)$$
\item $\F$ is 1-homogeneous: for all $t\geq 0$ and $M\in \mathcal{S}_n$, $p\in \rn$, $x\in\Omega$, we have
   $$  F(tM,tp,x)=t\,F(M,p,x). \leqno(H_2)$$
\end{itemize}
\end{Def}

We will also need the definition of the principal half-eigenvalue of an Isaacs operator. See  \cite{Armstrong} for more details.

\begin{Def}\label{def33}
Let $\Omega$ be a bounded domain of $\rn$ and $\F$ be an Isaacs operator.
We define the finite real number
$$\lambda_1^+(-\F,\Omega)=\sup\{\mu\in \mathbb{R}, \;\exists u\in C(\Omega),\;u>0,\; -\F[u]\geq \mu u\; \text{ in } \Omega\} .$$
\end{Def}

We recall all equalities and inequalities in this paper are understood in the viscosity sense. For the notion of viscosity solution, we refer the reader to \cite{CIL}, \cite{CafCab}.

We recall a transitivity result, whose proof  is a simple consequence of $(H_1)$ above and Lemma 3.2 in \cite{Armstrong}.

\begin{lem}
\label{lem_u-v}
Let $\F$ be an Isaacs operator.
Assume that  $f,g\in C(\Omega)$, $u,v\in C({\Omega})$ are viscosity solutions in $\Omega$  of
$$
\left\{ {\alignedat2
\F[u]&\geq f,\\
\F[v]&\leq g.
\endalignedat}\right.
$$
Then $w=u-v$ is a viscosity solution in $\Omega$  of
$$\mathcal{M}^+(D^2w) + B|Dw|\geq f-g. $$
\end{lem}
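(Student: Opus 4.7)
The plan is to verify the viscosity subsolution condition directly from the definition. Fix a test function $\varphi \in C^2(\Omega)$ and a point $x_0 \in \Omega$ at which $u-v-\varphi$ attains a local maximum; replacing $\varphi$ by $\varphi + |x-x_0|^4$, this maximum can be assumed strict without loss of generality. The goal is to establish
$$
\mathcal{M}^+(D^2\varphi(x_0)) + B|D\varphi(x_0)| \;\geq\; f(x_0) - g(x_0).
$$

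The main technical device is the Crandall--Ishii--Lions theorem of sums (Ishii's lemma), applied to the doubled function
$$
\Phi_\alpha(x,y) \;=\; u(x) - v(y) - \varphi(x) - \tfrac{\alpha}{2}|x-y|^2
$$
on a product ball around $(x_0,x_0)$. For each large $\alpha$, $\Phi_\alpha$ attains its maximum at some $(x_\alpha,y_\alpha)$, and by standard penalization $x_\alpha,y_\alpha\to x_0$ with $\alpha|x_\alpha-y_\alpha|^2\to 0$. Ishii's lemma then furnishes, for any $\eta>0$, symmetric matrices $X_\alpha,Y_\alpha\in\mathcal{S}_n$ with
$$
\bigl(D\varphi(x_\alpha)+\alpha(x_\alpha-y_\alpha),\,X_\alpha\bigr)\in\overline{J}^{2,+}u(x_\alpha), \qquad \bigl(\alpha(x_\alpha-y_\alpha),\,Y_\alpha\bigr)\in\overline{J}^{2,-}v(y_\alpha),
$$
and the quadratic-form bound $X_\alpha - Y_\alpha \leq D^2\varphi(x_\alpha) + \eta\bigl(D^2\varphi(x_\alpha)\bigr)^2$; the cross terms linear in $\alpha$ cancel exactly in the relevant block computation, which is what makes this tool effective here.

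I then feed these jets into the viscosity inequalities $\F[u] \geq f$ and $\F[v] \leq g$ to obtain
$$
F\bigl(X_\alpha,\,D\varphi(x_\alpha)+\alpha(x_\alpha-y_\alpha),\,x_\alpha\bigr)\geq f(x_\alpha), \quad F\bigl(Y_\alpha,\,\alpha(x_\alpha-y_\alpha),\,y_\alpha\bigr)\leq g(y_\alpha).
$$
Subtracting, bringing both $F$-values to the common base point $x_\alpha$ via continuity of the coefficients of $\F$ (the standard step in this framework, absorbing an $o_\alpha(1)$ error), and applying the Lipschitz--ellipticity bound $(H_1)$ to the difference yields
$$
\mathcal{M}^+(X_\alpha - Y_\alpha) + B|D\varphi(x_\alpha)| \;\geq\; f(x_\alpha) - g(y_\alpha) - o_\alpha(1).
$$
Monotonicity and sub-additivity of $\mathcal{M}^+$ together with the quadratic-form bound above convert the left-hand side into $\mathcal{M}^+(D^2\varphi(x_\alpha)) + B|D\varphi(x_\alpha)| + O(\eta)$; letting $\eta\to 0$ and then $\alpha\to\infty$ gives the required inequality at $x_0$ by continuity of $D^2\varphi$, $D\varphi$, $f$, and $g$.

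The only real difficulty is the technical interplay between the matrix estimate from Ishii's lemma and the $x$-continuity of $F$ needed to reconcile the two viscosity inequalities at a common base point; both points are classical for Isaacs operators and are treated exactly as in Lemma~3.2 of the Armstrong reference cited by the authors, after which the conclusion is the elementary application of $(H_1)$ that the authors emphasize.
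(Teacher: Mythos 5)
Your overall strategy is the same as the paper's, which delegates the whole thing to condition $(H_1)$ together with Lemma~3.2 of the Armstrong reference; that lemma is precisely the doubling-of-variables/theorem-of-sums argument you sketch, so the route is right. But there is a genuine gap in the step you describe as ``bringing both $F$-values to the common base point $x_\alpha$ via continuity of the coefficients of $\F$ \ldots absorbing an $o_\alpha(1)$ error.'' After subtracting the two viscosity inequalities, the quantity you need to control is $F(Y_\alpha,q_\alpha,x_\alpha)-F(Y_\alpha,q_\alpha,y_\alpha)$. The theorem of sums gives $\|X_\alpha\|,\|Y_\alpha\|=O(\alpha)$, while the penalization gives only $|x_\alpha-y_\alpha|=o(\alpha^{-1/2})$, so mere continuity of the coefficient maps yields a bound of size $\alpha\,\omega(|x_\alpha-y_\alpha|)$ with $\omega$ the modulus of continuity, and this does \emph{not} tend to zero unless $\omega$ is at least Lipschitz. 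This is exactly the well-known obstruction in Ishii's comparison argument for operators with $x$-dependent coefficients.

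The correct handling does not split off the base-point term as you do. One instead keeps the full $2n\times 2n$ block inequality from the theorem of sums and, for a linear piece $\mathrm{tr}\bigl(A(x)M\bigr)$ with $A=\sigma\sigma^T$, tests it on pairs $\bigl(\sigma(x_\alpha)e_i,\sigma(y_\alpha)e_i\bigr)$ to obtain
\[
\mathrm{tr}\bigl(A(x_\alpha)X_\alpha\bigr)-\mathrm{tr}\bigl(A(y_\alpha)Y_\alpha\bigr)\;\le\;\mathrm{tr}\bigl(A(x_\alpha)D^2\varphi(x_\alpha)\bigr)+C\,\alpha\,|\sigma(x_\alpha)-\sigma(y_\alpha)|^2+O(\eta),
\]
and even then one needs a Lipschitz-type modulus on $\sigma$ to make $\alpha|\sigma(x_\alpha)-\sigma(y_\alpha)|^2\to 0$. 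At the level of generality of Definition~\ref{def32} (continuous coefficients only), your argument therefore does not close as written. This is precisely what the citation to Armstrong's Lemma~3.2 is doing in the paper: supplying a proof that avoids the $x$-doubling modulus issue, for instance via sup/inf-convolutions and almost-everywhere second differentiability in the spirit of \cite{CCKS}, rather than the bare Crandall--Ishii--Lions doubling you invoke. To repair your proof you should either add the appropriate structure condition on the $x$-dependence of $F$ and carry out the block computation, or switch to the convolution-based argument.
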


The next simple lemma is helpful in exploiting condition (\ref{condition_fg})
on the nonlinearities $f$ and $g$ of the system, as shown in the subsequent result.
\begin{lem}
\label{lem_|w|_|h|}
Assume that $\F$ satisfies $(H_1)$ and $\F[0]=F(0,0,x)=0$.
If  $w,h$ are continuous functions such that sign$(w)$=sign$(h)$ and $w$ is a viscosity solution in $\Omega$ of
 $$
 \left\{ {\alignedat2
 \F[w]&\geq h\\
 \F[-w]&\geq -h ,
 \endalignedat}\right.
$$
 then $|w|$
 is a viscosity solution in $\Omega$ of
 $$\F[|w|]\geq |h|$$
\end{lem}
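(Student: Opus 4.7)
The plan is to verify directly the viscosity subsolution inequality $F(D^2\varphi(x_0), D\varphi(x_0), x_0) \geq |h|(x_0)$ for every $C^2$ test function $\varphi$ touching $|w|$ from above at an arbitrary point $x_0 \in \Omega$, normalized so that $|w|(x_0) = \varphi(x_0)$. Since $w$ is continuous, the sign-matching hypothesis partitions $\Omega$ into the two open sets $\{w > 0\}$ and $\{w < 0\}$, together with the closed zero set $\{w = 0\} = \{h = 0\}$, and I will treat these three situations separately.

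If $x_0 \in \{w>0\}$, then by continuity $|w| = w$ in a neighborhood of $x_0$, so $\varphi$ also touches $w$ from above at $x_0$. The hypothesis $\F[w] \geq h$ then yields $F(D^2\varphi(x_0), D\varphi(x_0), x_0) \geq h(x_0) = |h|(x_0)$. Symmetrically, if $x_0 \in \{w<0\}$, then $|w| = -w$ locally, so $\varphi$ touches $-w$ from above at $x_0$, and the hypothesis $\F[-w] \geq -h$ gives $F(D^2\varphi(x_0), D\varphi(x_0), x_0) \geq -h(x_0) = |h|(x_0)$. Both cases are essentially tautological.

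The main step is the degenerate case $x_0 \in \{w = 0\}$, where $|w|(x_0) = 0$ and, crucially, $|h|(x_0) = 0$ by sign-matching. Because $\varphi$ touches $|w|$ from above at $x_0$ with $\varphi(x_0) = 0$, one has $\varphi(x) \geq |w|(x) \geq 0$ for $x$ near $x_0$, so $\varphi$ attains a local minimum at $x_0$; hence $D\varphi(x_0) = 0$ and $D^2\varphi(x_0) \geq 0$. Applying the left inequality in $(H_1)$ with $M = D^2\varphi(x_0)$, $N = 0$, $p = q = 0$, and invoking the normalization $F(0,0,x_0) = 0$, I obtain
$$F(D^2\varphi(x_0), 0, x_0) \geq \mathcal{M}^-(D^2\varphi(x_0)) \geq 0 = |h|(x_0),$$
where the last inequality uses that $\mathcal{M}^-$ is nonnegative on positive semidefinite matrices. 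This handles the zero set, completing the three cases.

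The only real obstacle is precisely the zero set, where $|w|$ can fail to be smooth even if $w$ is; it is handled by the observation that test functions from above at zeros of $|w|$ are automatically forced into critical-point-with-Hessian-$\geq 0$ behavior, which the uniform ellipticity of $\F$ together with $\F[0]=0$ converts into the required inequality.
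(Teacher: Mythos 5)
Your proof is correct and follows essentially the same three-case decomposition (by the sign of $w(x_0)$) as the paper's, with the same key observation that at a zero of $w$ any test function from above has vanishing gradient and nonnegative Hessian. The only cosmetic difference is that you justify the final inequality by explicitly invoking $(H_1)$ and $\mathcal{M}^-\geq 0$ on positive semidefinite matrices, whereas the paper appeals directly to the monotonicity $F(D^2\phi(x_0),0,x_0)\geq F(0,0,x_0)=0$; these are the same fact.
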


\begin{proof}
Assume $\phi\in C^2(\Omega)$ touches by above $|w|$ at $x_0\in \Omega$.
If $w(x_0)>0$, then $h(x_0)\geq 0$ and since $\phi$ touches $w$ by above at $x_0$, we have
 $$F(D^2\phi(x_0), D\phi(x_0),x_0)\geq h(x_0)=|h(x_0)|.$$
 If $w(x_0)<0$, then $h(x_0)\leq 0$ and since $\phi$ touches $-w$ by above at $x_0$, we have
 $$F(D^2\phi(x_0), D\phi(x_0),x_0)\geq -h(x_0)=|h(x_0)|.$$
 If $w(x_0)=0$, then $h(x_0)=0$. Moreover, $\phi(x_0)=0$ and $\phi\geq |w|\geq 0$ so $x_0\in \Omega$  is a minimum point of $\phi$. Hence
$D^2\phi(x_0)\geq 0$, $D\phi(x_0)=0$, from which we deduce  $F(D^2\phi(x_0), D\phi(x_0),x_0)\geq F(0,0,x_0)=0=|h(x_0)|$.\end{proof}

\begin{lem}
\label{lem_|u-Kv|}Assume that $\F$ is an Isaacs operator.
Let $(u,v)$ be a viscosity solution of (\ref{mainsyst}) in $\Omega$ and assume that the nonlinearities $f,g\in C(\Omega)$  satisfy (\ref{condition_fg}).
Then
$$\mathcal{M}^+(D^2|u-Kv|)+B|D|u-Kv||\geq |f-Kg| \text{ \qquad in } \Omega $$
in the viscosity sense.
\end{lem}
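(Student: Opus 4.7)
The plan is to combine the two preceding lemmas in a direct way. Set $w:=u-Kv$ and $h:=Kg-f$, and observe that $|h|=|f-Kg|$ is exactly the right-hand side we want. I first aim to produce two one-sided viscosity inequalities for $w$ and $-w$, and then invoke Lemma \ref{lem_|w|_|h|} to stitch them together into a single inequality for $|w|$.

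Because $\F$ is $1$-homogeneous with $K>0$, the function $Kv$ satisfies $\F[Kv]=K\F[v]=-Kg$. So we have simultaneously $\F[u]\ge -f$, $\F[u]\le -f$, $\F[Kv]\ge -Kg$, $\F[Kv]\le -Kg$. Applying Lemma \ref{lem_u-v} first to the pair $(u,Kv)$ (with $\F[u]\ge -f$ and $\F[Kv]\le -Kg$) and then to the pair $(Kv,u)$ yields, in the viscosity sense,
\begin{equation*}
\mathcal{M}^+(D^2 w)+B|Dw|\ \ge\ -f-(-Kg)\ =\ h,\qquad
\mathcal{M}^+(D^2(-w))+B|D(-w)|\ \ge\ -h.
\end{equation*}

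Next, I use the algebraic hypothesis (\ref{condition_fg}): from $[f-Kg][u-Kv]\le 0$ it follows that $\mathrm{sign}(h)=\mathrm{sign}(Kg-f)=\mathrm{sign}(u-Kv)=\mathrm{sign}(w)$, so the sign condition required by Lemma \ref{lem_|w|_|h|} is met. The operator $\mathcal{G}[\varphi]:=\mathcal{M}^+(D^2\varphi)+B|D\varphi|$ is itself an Isaacs operator satisfying $(H_1)$ (indeed it is already written in Pucci form) and $\mathcal{G}[0]=0$, so Lemma \ref{lem_|w|_|h|} applies to $w$ with right-hand side $h$ and gives
\begin{equation*}
\mathcal{M}^+(D^2|w|)+B\bigl|D|w|\bigr|\ \ge\ |h|\ =\ |f-Kg|,
\end{equation*}
which is precisely the desired inequality.

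The main conceptual point, and the only place where something non-routine happens, is matching the sign hypothesis of Lemma \ref{lem_|w|_|h|} to the algebraic condition (\ref{condition_fg}); once this matching is set up, everything else is bookkeeping. A minor technical check is that $\mathcal{G}$ genuinely fits the framework of Lemma \ref{lem_|w|_|h|}, but this is immediate since $\mathcal{M}^+$ plus a Lipschitz first-order term trivially satisfies $(H_1)$ and vanishes at $(0,0,x)$.
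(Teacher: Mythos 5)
Your overall route — derive the two one-sided inequalities for $w$ and $-w$ from Lemma \ref{lem_u-v} using $1$-homogeneity ($\F[Kv]=K\F[v]$), then feed the pure Pucci operator $\mathcal{M}^+(D^2\cdot)+B|D\cdot|$ into Lemma \ref{lem_|w|_|h|} — is exactly the paper's intended argument, and your check that this operator satisfies $(H_1)$ and vanishes at $(0,0,x)$ is correct.

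There is, however, a genuine gap in the sign-matching step. From \eqref{condition_fg} you get $hw\ge 0$ with $h=Kg-f$ and $w=u-Kv$, and you then assert this implies $\mathrm{sign}(h)=\mathrm{sign}(w)$. That inference is false: $hw\ge 0$ is compatible with $w=0$ while $h\neq 0$. And this is precisely the delicate case in the proof of Lemma \ref{lem_|w|_|h|}: when $w(x_0)=0$, the proof touches a test function from above at an interior minimum of $\phi$, uses $D^2\phi(x_0)\ge 0$, $D\phi(x_0)=0$, and concludes $F(D^2\phi(x_0),D\phi(x_0),x_0)\ge F(0,0,x_0)=0$, which only matches the required $\ge |h(x_0)|$ because $h(x_0)=0$. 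So you must separately verify that $w=0\Rightarrow h=0$. This does hold here, but it requires an argument the proposal omits: fix $x,v$ and let $u\to (Kv)^{\pm}$ in \eqref{condition_fg}. For $u>Kv$ one needs $f-Kg\le 0$, for $u<Kv$ one needs $f-Kg\ge 0$, and by continuity of $f,g$ both limits give $f(x,Kv,v)-Kg(x,Kv,v)=0$; hence $h$ vanishes identically on the set $\{u=Kv\}$. Adding this observation closes the gap and reproduces the paper's proof.
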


\begin{proof}
Apply Lemma \ref{lem_|w|_|h|} to
$$w=u-Kv,\qquad\mbox{and}\qquad h=Kg(\cdot,u,v)-f(\cdot,u,v).$$
Observe that condition (\ref{condition_fg}) means that
$ h\,w\geq 0$.
Also, by the continuity of $f,g$ and (\ref{condition_fg}), it is easy to see that for all $x\in \Omega$ and $v\in \mathbb{R}$, we have
$Kg(x,Kv,v)-f(x,Kv,v)=0$,  hence if $w=0$ then $h=0$.
\end{proof}

The last lemma will be useful   when considering  system (\ref{system_Spqr}) on the whole space
(since   the auxiliary  function $Z=\min(u,Kv)$ will play a crucial role in our analysis).
\begin{lem}
\label{lem_min_uv} Assume that $\F$ is an Isaacs operator.
Let $\Omega$ be an open set and let $u,v,f,g,h\in C(\Omega)$.
Assume that
$u$ and $v$ are respectively viscosity solutions of
$$-\F[u]\geq f
\text{ \qquad and \qquad }-\F[v]\geq g $$
and that
\begin{equation}
 \label{condition_fgh}
 \left\{ {\alignedat2
 f\geq h & \text{\qquad on } \{u\leq v\}\\
 g\geq h & \text{\qquad  on } \{u>v\}.
 \endalignedat}\right.
 \end{equation}
Then
$ w:=\min(u,v)\in C(\Omega)$
is a viscosity solution in $\Omega$ of
$$-\F[w]\geq h. $$
\end{lem}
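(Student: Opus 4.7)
The plan is to verify the defining property of a viscosity supersolution directly, by testing $w$ from below with an arbitrary $C^2$ function and splitting into two cases according to which of $u$ or $v$ realizes the minimum at the contact point. This is the standard ``infimum of supersolutions is a supersolution'' argument, adapted to the present setting where the right-hand sides differ on the two pieces but are uniformly controlled from below by $h$ thanks to hypothesis (\ref{condition_fgh}).

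Concretely, let $\phi\in C^2(\Omega)$ touch $w=\min(u,v)$ from below at some $x_0\in\Omega$, meaning $\phi\le w$ in a neighbourhood of $x_0$ and $\phi(x_0)=w(x_0)$. By continuity of $u$ and $v$, at least one of the identities $w(x_0)=u(x_0)$ or $w(x_0)=v(x_0)$ holds, so we distinguish two cases.

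In the first case, assume $u(x_0)\le v(x_0)$, so that $w(x_0)=u(x_0)$. Then $\phi\le w\le u$ in a neighbourhood of $x_0$ with equality at $x_0$, hence $\phi$ touches $u$ from below at $x_0$. Since $-\F[u]\ge f$ in the viscosity sense, we obtain $-F(D^2\phi(x_0),D\phi(x_0),x_0)\ge f(x_0)$, and the first line of (\ref{condition_fgh}) yields $f(x_0)\ge h(x_0)$, giving the required inequality. In the second case, $u(x_0)>v(x_0)$, so $w(x_0)=v(x_0)$; the same reasoning applied to $v$ combined with the second line of (\ref{condition_fgh}) gives $-F(D^2\phi(x_0),D\phi(x_0),x_0)\ge g(x_0)\ge h(x_0)$.

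There is no genuine obstacle here; the only point requiring a little care is that condition (\ref{condition_fgh}) is stated with a strict inequality $\{u>v\}$ in one line and a non-strict $\{u\le v\}$ in the other, which is exactly what is needed so that the two cases cover all possibilities at the contact point $x_0$. Continuity of $u$ and $v$ (and of $w$) guarantees that the test inequality only needs to be verified pointwise at $x_0$, and no regularity of $f,g,h$ beyond continuity is used.
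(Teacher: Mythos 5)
Your proof is correct and follows exactly the same argument as the paper: test $w$ from below with a $C^2$ function at $x_0$, split into the cases $u(x_0)\le v(x_0)$ and $u(x_0)>v(x_0)$, observe that the test function then also touches the corresponding component from below, and invoke the supersolution property together with (\ref{condition_fgh}). No differences to report.
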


\begin{proof}
Assume $\phi\in C^2(\Omega)$ touches by below $w$ at $x_0\in \Omega$. Then $\phi \leq u$ and $\phi \leq v$.
If $u(x_0)\leq v(x_0)$ then   $w(x_0)=u(x_0)$ and $\phi$ touches $u$
by below  at $x_0$, hence
$$-F(D^2\phi(x_0), D\phi(x_0),x_0)\geq f(x_0)\geq h(x_0).$$
Similarly, if $u(x_0)>v(x_0)$ then   $w(x_0)=v(x_0)$ and $\phi$ touches $v$
by below  at $x_0$, hence
$$-F(D^2\phi(x_0), D\phi(x_0),x_0)\geq g(x_0)\geq h(x_0).$$\end{proof}

For the reader's convenience, we next recall  some known facts to which we
 refer in the subsequent proofs.

\begin{lem}
 \label{lem_harmonic_bounded}
Let  $F(D^2u)$ be an Isaacs operator.
If $u$ is a  viscosity solution of $F(D^2u)=0$ on $\rn$ and $u$ is bounded from below, then $u$ is constant.
\end{lem}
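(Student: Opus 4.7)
The plan is to reduce to a nonnegative solution by subtracting the infimum, then apply the scale-invariant Harnack inequality of Caffarelli--Cabr\'e for uniformly elliptic $1$-homogeneous equations, and finally let the size of the ball tend to infinity.

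More precisely, since $u$ is bounded below I would set $m:=\inf_{\rn} u$ (finite) and $\tilde u:=u-m\geq 0$. Because $F$ depends only on $D^2u$, adding a constant does not change the equation, so $\tilde u$ is again a viscosity solution of $F(D^2\tilde u)=0$ in $\rn$ and satisfies $\inf_{\rn}\tilde u = 0$. The $1$-homogeneity $(H_2)$ of $F$ makes the equation $F(D^2 w)=0$ invariant under the rescaling $w(y)\mapsto w(Ry)$, so the standard Harnack inequality for nonnegative viscosity solutions of uniformly elliptic, $1$-homogeneous equations, applied on $B_{2R}$, yields
$$
\sup_{B_R} \tilde u \;\leq\; C_0 \inf_{B_R} \tilde u,
$$
where $C_0=C_0(n,\lambda,\Lambda)$ does not depend on $R$.

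To conclude, I would send $R\to\infty$. The left-hand side is non-decreasing in $R$ and tends to $\sup_{\rn}\tilde u$, while the right-hand side is non-increasing in $R$ and tends to $C_0\inf_{\rn}\tilde u = 0$. Hence $\tilde u\equiv 0$, that is, $u\equiv m$. The only real obstacle is guaranteeing the scale-invariance of the Harnack constant, which is provided precisely by the $1$-homogeneity and the purely second-order character assumed in the statement; if $F$ had first-order terms or nontrivial $x$-dependence, the Harnack constant would degenerate under the rescaling $\tilde u\mapsto \tilde u(R\cdot)$ and this direct argument would break down.
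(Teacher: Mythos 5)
Your argument is correct and is exactly the well-known Harnack-based Liouville proof that the paper invokes without spelling out (the paper simply remarks that ``the proof is the same as for the Laplacian'' and cites Theorem 1.7 of \cite{ASS2}). In particular you rightly identify the two ingredients that make the scaling step work: the purely second-order, $x$-independent form of $F$ and its positive $1$-homogeneity, both of which are indeed what keep the Harnack constant uniform over all scales.
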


\begin{proof} The proof is well known and is the same as for the Laplacian. The result is also included in Theorem 1.7 in \cite{ASS2}.
\end{proof}

\begin{lem}
 \label{lem_nonexistence_superharmonic_if_alpha*<=0}
Let  $F(D^2u)$ be an Isaacs operator. Let $z$ a viscosity solution on $\rn$ of
$$ -F(D^2 z)\geq 0.$$
\begin{itemize}
\item[i)] Assume $\alpha ^*(F)\leq 0$.
If $z$ is bounded by below, then
$z$ is constant.
\item[ii)] Assume $\alpha ^*(F)>0$. Then for some $m>0$
$$z\geq \frac{m}{|x|^{\alpha ^*(F)}}\quad \text{ for all }\quad |x|\geq 1. $$
\end{itemize}

\end{lem}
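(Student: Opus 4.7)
The plan is to address the two parts separately, exploiting the definition of $\alpha^*(F)$ together with the construction of homogeneous barriers from \cite{ASS2}.

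For part (i), I first reduce to the case $z\geq 0$: since $F$ is $1$-homogeneous with $F(0)=0$, adding the constant $-\inf z$ does not alter the inequality $-F(D^2 z)\geq 0$. Next, I apply the strong minimum principle for $F$-superharmonic functions (\cite{CafCab}): either $z$ attains its infimum at a point of $\rn$, in which case $z$ must be constant and we are done, or $z-\inf z > 0$ strictly everywhere on $\rn$. In the latter case, $z-\inf z$ is a strictly positive entire viscosity supersolution of $-F(D^2 u)\geq 0$. This directly contradicts the very meaning of the assumption $\alpha^*(F)\leq 0$ as recorded in the definition of $\alpha^*$ in Section~\ref{sect2}: namely, that no positive $\alpha$ admits a $(-\alpha)$-homogeneous positive supersolution in $\rn\setminus\{0\}$, in which case no positive entire supersolution exists at all.

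For part (ii), I may assume $z$ is non-trivial (the $m>0$ conclusion being vacuous otherwise), and the strong minimum principle then gives $z>0$ on $\rn$. When $\alpha^*(F)>0$, the construction in \cite{ASS2} produces a positive, $(-\alpha^*(F))$-homogeneous viscosity solution $\Psi^+$ of $F(D^2\Psi^+)=0$ in $\rn\setminus\{0\}$. Normalizing, set $c:=(\min_{|x|=1} z)/(\max_{|x|=1}\Psi^+)>0$, so that $z\geq c\Psi^+$ on $\partial B_1$.

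The remaining and most delicate step is to propagate this inequality to the whole exterior region $\{|x|\geq 1\}$. Since $c\Psi^+$ is an actual $F$-harmonic function (not merely a supersolution), $z$ is a supersolution with $z\geq c\Psi^+$ on $\partial B_1$, and $c\Psi^+(x)\to 0$ as $|x|\to\infty$ while $z\geq 0$, I would invoke the Phragm\`en-Lindel\"of-type comparison principle for $F$-superharmonic functions in exterior domains established in \cite{ASS2} and \cite{ASS}. This yields $z\geq c\Psi^+$ throughout $\{|x|\geq 1\}$, and the $(-\alpha^*(F))$-homogeneity of $\Psi^+$, combined with its positivity and continuity on the compact unit sphere, then gives $z(x)\geq m\,|x|^{-\alpha^*(F)}$ with some $m>0$, as required. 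The main obstacle is exactly this Phragm\`en-Lindel\"of step for fully nonlinear supersolutions; it is substantive but is already available in \cite{ASS2},~\cite{ASS}, so the present proof reduces to assembling those tools in the right order.
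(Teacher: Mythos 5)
Your proof is correct and essentially unfolds the two external citations that constitute the paper's entire proof: the paper dispatches (i) by pointing to Theorem~4.3 of \cite{armsir} and (ii) by pointing to Lemma~3.8 of \cite{armsir}, and your steps reproduce what those results contain. A couple of remarks are worth recording. In (i), the ``fact'' you appeal to---that when $\alpha^*(F)\le 0$ the inequality $-F(D^2u)\ge 0$ admits no positive entire supersolution---is a substantive theorem (precisely Theorem~4.3 of \cite{armsir}), not something that follows from the definition of $\alpha^*$; the parenthetical in the paper's definition is merely recording that theorem, so your reduction via the strong minimum principle is correct but all the real content is still carried by that citation, and this should be said explicitly to avoid the appearance of circularity. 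In (ii), two small points: if $z\equiv 0$ the conclusion is false rather than vacuous, so the implicit hypothesis (used throughout the paper) that $z\ge 0$ and $z\not\equiv 0$ should be stated, as it is also what lets you invoke the strong minimum principle to get $z>0$. Also, the Phragm\`en-Lindel\"of results of \cite{ASS} concern cones, not exterior domains, so they are not quite the right reference for propagating $z\ge c\Psi^+$ outward. What is actually needed is elementary: since $z\ge 0$ and $\Psi^+\to 0$ at infinity (being $(-\alpha^*)$-homogeneous with $\alpha^*>0$), one applies the comparison principle in the bounded annulus $\{1<|x|<R\}$---note via Lemma~\ref{lem_u-v} that $c\Psi^+-z$ is a subsolution of $\mathcal{M}^+$---and lets $R\to\infty$; this exhaustion is exactly the argument behind Lemma~3.8 of \cite{armsir}.
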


\begin{proof}
The statement in i)  follows from Theorem 4.3 in \cite{armsir}, whereas
ii) is a particular case of Lemma 3.8 in \cite{armsir}. \end{proof}

\begin{lem}
\label{QSMP} Let $\Omega$ be an domain of $\rn$ and
let $F(D^2u )$ be an Isaacs operator.
Let $h\ge 0$ and $u\geq 0$ be a viscosity solution in $\Omega$ of
$$ -F(D^2u )\geq h.$$
Then, for any compact $K\subset \Omega$, there exist $\gamma=\gamma(\lambda,\Lambda,n)>0$,
$c=c(\lambda,\Lambda,n, K. \Omega)>0$ such that
$$\underset{K}{\inf}\; u\geq c \left(\int_{K }h^\gamma\right)^{\frac{1}{\gamma}} .$$

\end{lem}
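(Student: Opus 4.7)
The plan is to derive the lemma as a Krylov--Safonov-type quantitative strong minimum principle, following the classical viscosity-solution theory for fully nonlinear uniformly elliptic equations.

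\textbf{Step 1: Reduction to the Pucci minimal operator.} Taking $t=0$ in $(H_2)$ yields $F(0,0,x)=0$; combined with $(H_1)$ this gives $\mathcal{M}^-(D^2u)\le F(D^2u)\le -h$ in the viscosity sense. So $u$ is a nonnegative viscosity supersolution of $\mathcal{M}^-(D^2u)\le -h$, and it suffices to prove the estimate in this reduced setting, with constants depending only on $n,\lambda,\Lambda$.

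\textbf{Step 2: Covering and reduction to a single ball.} Since $K\subset\Omega$ is compact, choose $r>0$ with $B_{2r}(x)\subset\Omega$ for every $x\in K$ and cover $K$ by finitely many balls $B_r(x_i)$, $i=1,\dots,N$, with $N=N(K,\Omega)$. It then suffices to establish the local inequality
$$\inf_{B_r(x_i)}u\;\ge\; c_r\Bigl(\int_{B_r(x_i)}h^\gamma\Bigr)^{1/\gamma}$$
for each $i$, with $c_r,\gamma$ depending only on $n,\lambda,\Lambda,r$; taking the minimum over $i$ and using $\int_K h^\gamma\le\sum_i\int_{B_r(x_i)}h^\gamma\le N\max_i\int_{B_r(x_i)}h^\gamma$ then yields the statement.

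\textbf{Step 3: Comparison with a Dirichlet problem.} Fix $i$, set $B=B_{2r}(x_i)$, and let $v$ be the viscosity solution of $\mathcal{M}^-(D^2v)=-h$ in $B$ with $v=0$ on $\partial B$ (existence and uniqueness by Perron's method for the concave operator $\mathcal{M}^-$). The maximum principle gives $v\ge 0$; the standard viscosity comparison principle applied to the concave operator $\mathcal{M}^-$, with $u\ge 0 =v$ on $\partial B$, yields $u\ge v$ in $B$.

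\textbf{Step 4: Riesz-potential lower bound for $v$.} The quantitative ingredient is a pointwise estimate of Riesz-potential type,
$$v(x)\;\ge\; c\int_{B_r(x_i)}|x-y|^{-\alpha}\,h(y)\,dy,\qquad x\in B_r(x_i),$$
with an exponent $\alpha=\alpha(n,\lambda,\Lambda)$ tied to the scaling exponent $\alpha^*(\mathcal{M}^-)$; this is the fully nonlinear analogue of the classical Green-function bound $G(x,y)\gtrsim|x-y|^{2-n}$ for the Laplacian, obtained from the ABP--Krylov--Tso measure estimate together with a barrier built from the $(-\alpha^*(\mathcal{M}^-))$-homogeneous radial solution of $\mathcal{M}^-=0$. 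Choosing any $\gamma=\gamma(n,\lambda,\Lambda)$ with $\gamma(n-\alpha)>n$ and applying H\"older's inequality inside $B_r(x_i)$ gives the desired $L^\gamma$-bound, completing the local estimate.

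\textbf{Main obstacle.} The core technical point is Step~4, i.e.\ the Riesz-potential / Green-function lower bound for the Pucci minimal operator. This is classical within the Krylov--Safonov--Caffarelli theory but relies on the ABP--Krylov--Tso lemma and a nontrivial barrier construction using the homogeneous radial solutions. The remaining steps---Pucci reduction, viscosity comparison, covering, and H\"older---are routine once this ingredient is granted.
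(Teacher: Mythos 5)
The paper does not actually give a self-contained proof of Lemma~\ref{QSMP}: it cites Krylov's paper \cite{K}, where the estimate is established for linear operators via a Krylov--Safonov-type measure-theoretic covering/iteration argument, and the authors simply remark that the same proof carries over verbatim to fully nonlinear Isaacs operators. Your proposal takes a genuinely different route --- a Green-function/Riesz-potential lower bound in a ball, combined with H\"older --- so the two approaches should be compared on their merits. Your Steps~1--3 (reduction to $\mathcal{M}^-$ via $(H_1)$--$(H_2)$, covering $K$ by balls, and comparison with the Dirichlet solution $v$ of $\mathcal{M}^-(D^2v)=-h$) are correct and routine.

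The genuine gap is in Step~4, and it is not merely that the Green-function bound is technical: the final deduction is logically backwards. You claim a lower bound of the form $v(x)\ge c\int_{B_r}|x-y|^{-\alpha}h(y)\,dy$, then propose to ``choose $\gamma$ with $\gamma(n-\alpha)>n$ and apply H\"older'' to conclude $\inf v\ge c'(\int h^\gamma)^{1/\gamma}$. But H\"older with conjugate exponents $\gamma>1$, $\gamma'$ (so that $|x-\cdot|^{-\alpha}\in L^{\gamma'}(B_r)$) produces the \emph{upper} bound $\int|x-y|^{-\alpha}h(y)\,dy\le C\|h\|_{L^{\gamma}}$; chaining this with $v\ge\int|x-y|^{-\alpha}h$ gives no lower bound on $v$ in terms of $\|h\|_{L^\gamma}$. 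The inequality you need goes the other way, and H\"older cannot provide it. A working elementary route here would be to use the crude kernel bound $|x-y|^{-\alpha}\ge c_1(r)>0$ for $x,y\in B_r$, giving $v\ge c_1\|h\|_{L^1(B_r)}$, and then the power-mean inequality $(\int_{B_r}h^\gamma)^{1/\gamma}\le |B_r|^{(1-\gamma)/\gamma}\|h\|_{L^1(B_r)}$, which holds only for $\gamma\le 1$ (the opposite regime of what you picked). This shows that the Riesz-potential structure is unnecessary for the final H\"older step, and it also shows that the exponent $\gamma$ would then be any value in $(0,1]$ rather than a specific universal constant, so the argument would give a formally different dependence than Krylov's. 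Finally, even the weaker lower bound $v(x)\ge c_1\int_{B_r}h$ (a uniform positive ``Green's function'' lower bound for $\mathcal{M}^-$ on compacts) is itself a nontrivial claim that you defer without reference; it is not part of the classical Caffarelli--Cabr\'e toolbox, and indeed the cited proof of Krylov does not proceed through such a Green-function representation at all, but through ABP-type measure estimates on dyadic cubes. So the proposal is a different approach, but its central step is both unproved and, as assembled, incorrect in its final inequality.
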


\begin{proof}
This is proved in \cite{K}. The result in that paper is stated for a linear operator, but the same proof applies to any uniformly elliptic Isaacs operator.\end{proof}

It is well known (see for instance \cite{CIL}, \cite[Proposition~2.9]{CafCab}, \cite[Theorem 3.8]{CCKS}) that it is easy to pass to uniform limits with viscosity solutions.

\begin{lem}
 \label{lem_convergence_suite_de_solutions}
Let $\Omega$ be a domain of $\rn$ and $\F$ be an Isaacs operator.
Let $(u_j)$ a sequence of viscosity solutions
of
$$-\F[u_j]=f_j(x) \text{\qquad in }\Omega $$
where $f_j\in C(\Omega)$.
Assume that $u_j\rightarrow u$ and $f_j\rightarrow f$ locally uniformly in $\Omega$.
Then $u$ is a viscosity solution of
$$-\F[u]=f(x) \text{\qquad in }\Omega .$$
\end{lem}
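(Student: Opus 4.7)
The plan is to verify both the subsolution and supersolution inequalities for $u$ separately; the two cases being symmetric, I will focus on the supersolution side, that is, on showing $-\F[u]\ge f$ in the viscosity sense. This is a routine stability result for viscosity solutions, and the argument I have in mind is the classical one appearing in \cite{CIL} and \cite[Proposition~2.9]{CafCab}; the only thing to check is that it carries over verbatim to Isaacs operators with a first-order term as in \re{isaacs}, which is immediate from the continuity of $F(M,p,x)$ in $(M,p,x)$ granted by $(H_1)$.

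First I fix $x_0\in\Omega$ and a test function $\phi\in C^2(\Omega)$ touching $u$ from below at $x_0$. By the standard device of replacing $\phi$ with $\phi(x)-|x-x_0|^4$, I may assume the touching is strict on some closed ball $\overline{B_r(x_0)}\subset\Omega$, i.e.\ $u-\phi$ has a unique minimum at $x_0$ on $\overline{B_r(x_0)}$. Then for each $j$, the continuous function $u_j-\phi$ attains its minimum on $\overline{B_r(x_0)}$ at some point $x_j$. The local uniform convergence $u_j\to u$ together with the strict minimum property forces $x_j\to x_0$, by a standard compactness argument: any limit point of $(x_j)$ must minimize $u-\phi$ on $\overline{B_r(x_0)}$ and hence equal $x_0$; in particular $x_j\in B_r(x_0)$ for $j$ large, so $\phi$ touches $u_j$ from below at the interior point $x_j$.

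Applying the viscosity supersolution property of $u_j$ at $x_j$ gives
$$-F(D^2\phi(x_j),D\phi(x_j),x_j)\ge f_j(x_j).$$
Since $\phi\in C^2$, the map $x\mapsto (D^2\phi(x),D\phi(x))$ is continuous, $F$ is continuous in all its arguments, $x_j\to x_0$, and $f_j\to f$ locally uniformly; passing to the limit yields
$$-F(D^2\phi(x_0),D\phi(x_0),x_0)\ge f(x_0),$$
which is the desired supersolution inequality at $x_0$. The subsolution inequality is obtained by the symmetric argument, touching from above and perturbing $\phi$ by $+|x-x_0|^4$.

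There is no real obstacle here: the entire argument rests on the continuity of $F$ (guaranteed by $(H_1)$) and on the standard perturbation trick to convert touching to strict touching. I will simply indicate these references and carry out the short argument above, since Isaacs operators with bounded drift fit into the framework of \cite{CIL}, \cite{CafCab}, \cite{CCKS} without modification.
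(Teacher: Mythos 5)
Your proof is correct and is exactly the classical stability argument for viscosity solutions under locally uniform convergence; the paper does not prove this lemma at all but simply cites \cite{CIL}, \cite[Proposition~2.9]{CafCab} and \cite[Theorem~3.8]{CCKS}, and what you wrote out is precisely the argument contained in those references. One minor point of hygiene: after replacing $\phi$ by $\tilde\phi=\phi-|x-x_0|^4$ you should apply the viscosity inequality at $x_j$ to $\tilde\phi$, not $\phi$; this costs nothing since $D\tilde\phi(x_0)=D\phi(x_0)$ and $D^2\tilde\phi(x_0)=D^2\phi(x_0)$, so the limiting inequality is unchanged.
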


We also recall the following solvability result.

\begin{lem}
\label{lem_existence_Dirichlet}
Let $\F$ be an Isaacs operator, $R>0$ and $f\in C(\overline{B_R})$.
Then there exists a  viscosity solution $u\in C(\overline{B_R})$ of the problem
\begin{equation}
\label{problem_dirichlet}
\left\{\;{\alignedat2
 -\F[u] &=f(x)     &\qquad& x\in B_R,\\
 u &= 0&\qquad& x\in\partial B_R
  \endalignedat}\qquad\right.
\end{equation}
as well as a unique viscosity solution $u\in C^{2,\alpha}(\overline{B_R})$ of the problem
\begin{equation}
\label{problem_dirichlet2}
\left\{\;{\alignedat2
 -\mathcal{M}^+(D^2 u) +B|Du|&=f(x)     &\qquad& x\in B_R,\\
 u &= 0&\qquad& x\in\partial B_R
  \endalignedat}\qquad\right.
\end{equation}
\end{lem}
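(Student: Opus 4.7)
The plan is to treat the two Dirichlet problems separately, since the second asks for strictly more (uniqueness and $C^{2,\alpha}$ regularity) but also enjoys a more special structure (concavity in $D^2u$).

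For the first problem, my approach is Perron's method for viscosity solutions in the form developed by Ishii; see \cite{CIL}, \cite{CafCab}. Two inputs are needed. The first is the comparison principle, which holds for the Isaacs operator $\F$ thanks to the structural conditions $(H_1)$ and $(H_2)$, and is precisely what allows one to order sub- and supersolutions in Perron's construction. The second is a pair of continuous barriers at $\partial B_R$. Here one may take $w(x) = c(e^{-\alpha R^2} - e^{-\alpha |x|^2})$ and $-w(x)$; a direct computation using $(H_1)$ shows that for $\alpha$ large (depending on $R,\lambda,\Lambda,B$) and $c$ large (depending further on $\|f\|_\infty$), these are a classical supersolution and subsolution of $-\F[u]=f$ in $B_R$ with zero boundary data. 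The Perron envelope, namely the supremum of all subsolutions lying between $-w$ and $w$, is then a viscosity solution of (\ref{problem_dirichlet}), continuous up to $\partial B_R$ with the required boundary condition.

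For the second problem, write $G(M,p):=-\mathcal{M}^+(M)+B|p|$. Since $\mathcal{M}^+(M)=\sup_{\lambda I\le A\le\Lambda I}\mathrm{tr}(AM)$ is a supremum of linear functions of $M$, it is convex in $M$, so $G$ is concave in $M$ and Lipschitz in $p$. For concave uniformly elliptic operators with Lipschitz gradient dependence, the Evans--Krylov theorem (see \cite{CafCab}) yields interior $C^{2,\alpha}$ regularity for viscosity solutions, and the Krylov boundary regularity theory extends this up to the smooth boundary of $B_R$. Existence of a $C^{2,\alpha}(\overline{B_R})$ solution can then be obtained by the continuity method, connecting $G$ to the Laplacian along a path of uniformly elliptic concave operators and using the $C^{2,\alpha}$ a priori estimate from Evans--Krylov together with the ABP-based $L^\infty$ bound $\|u\|_\infty\le C\|f\|_\infty$. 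Uniqueness is immediate from the comparison principle for $G$, itself a consequence of $(H_1)$.

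The main technical point is the $B|Du|$ term in the second problem, since $p\mapsto B|p|$ is Lipschitz but not differentiable at the origin; however, the Evans--Krylov theorem and the continuity method arguments are known to accommodate Lipschitz gradient dependence, so this is a manageable technicality rather than a genuine obstruction. Everything else reduces to the standard theory of fully nonlinear uniformly elliptic equations on a smooth domain, for which the ball $B_R$ is the most favorable possible setting.
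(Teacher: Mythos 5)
The paper's own ``proof'' is a one-line citation to \cite{CCKS}, \cite{CKLS}; your outline is a reasonable reconstruction of what those references actually do (Perron's method for the first problem, Evans--Krylov plus the continuity method for the concave second problem). Structurally you are on the right track, but there are two genuine issues worth flagging.

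First, your explicit barrier has the wrong sign in the exponent. With $w(x) = c\,(e^{-\alpha R^2}-e^{-\alpha|x|^2})$ and $c>0$, one computes $D^2w = 2c\alpha\, e^{-\alpha|x|^2}\bigl(I - 2\alpha\, x\otimes x\bigr)$, which is positive definite near the origin; hence $\mathcal{M}^+(D^2w)(0)=2c\alpha n\Lambda>0$, so $w$ cannot be a global supersolution of $-\F[u]=f$ in $B_R$ for any choice of $c,\alpha$. (It is also negative in $B_R$, so it cannot dominate the candidate solutions.) The correct barrier is $w(x)=c\,(e^{\alpha R^2}-e^{\alpha|x|^2})$: then $D^2w = -2c\alpha\, e^{\alpha|x|^2}(I+2\alpha\, x\otimes x)$ is negative definite everywhere, and $\mathcal{M}^+(D^2w)+B|Dw| = -2c\alpha\, e^{\alpha|x|^2}\bigl[\lambda(n+2\alpha|x|^2)-B|x|\bigr]$ can be made $\le -\|f\|_\infty$ throughout $B_R$ by taking $\alpha$ and then $c$ large. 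The same sign issue applies to the subsolution $-w$.

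Second, and more importantly, you invoke ``the comparison principle for the Isaacs operator $\F$'' as a consequence of $(H_1)$--$(H_2)$. This is not automatic: $(H_1)$ gives uniform Lipschitz control in $(M,p)$ but says nothing about the modulus of continuity of the $x$-dependence of the coefficients $a^{(\alpha,\beta)}_{ij}$, $b^{(\alpha,\beta)}_i$, which are assumed merely continuous in the paper. Comparison for viscosity solutions of $F(D^2u,Du,x)=f(x)$ in this generality is delicate (and can fail for measurable coefficients). This is precisely why the lemma asserts only \emph{existence} for problem (\ref{problem_dirichlet}) and reserves uniqueness for the Pucci problem (\ref{problem_dirichlet2}), whose $x$-dependence is trivial. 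The cited references \cite{CCKS}, \cite{CKLS} do not run a bare Perron argument; they combine it with $W^{2,p}$ estimates and approximation by operators with smoother ingredients precisely to sidestep a comparison principle they do not have. Your sketch would become correct if the comparison claim were replaced by a reference to the approximation-plus-$W^{2,p}$ machinery, or if uniqueness were dropped from the discussion of the first problem.

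Your treatment of the second problem is sound: $\mathcal{M}^+$ is a supremum of linear maps, hence convex, so $-\mathcal{M}^+(D^2u)+B|Du|$ is concave in $D^2u$ with Lipschitz gradient dependence, and Evans--Krylov plus the continuity method with ABP a priori bounds does give a unique $C^{2,\alpha}(\overline{B_R})$ solution; uniqueness here is genuinely a direct consequence of $(H_1)$ since there is no $x$-dependence to worry about.
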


\begin{proof} This result is contained in \cite{CCKS}, \cite{CKLS}.
\end{proof}

   \section{Liouville theorems for weighted inequalities on $\rn$}\label{sect4}
 In this subsection, we present two Liouville type results for inequalities on $\rn$. The first one concerns a coercive inequality and extends Lemma 3.4 from \cite{MSS}. That lemma for the Laplacian was proved in \cite{MSS} by using spherical means -- a tool obviously not available for more general operators. Here we give a different and more general proof, based on estimates from the regularity theory of elliptic equations, such as growth lemmas and Harnack type inequalities.
\begin{lem}
\label{lem_type_Lin}
Let $F(D^2u)$ be an Isaacs operator and $w\geq 0$ be a viscosity solution of
\begin{equation}
\label{equ_type_Lin}
F(D^2w)\geq \frac{A}{1+|x|^2}\;w^p\qquad\mbox{on }\;\rn,
\end{equation}
where $p> 0$ and $A>0$.
\begin{itemize}
\item[i)] If $w\not\equiv 0$, then $w$ is unbounded.
\item[ii)] If $p>1$, then $w\equiv 0$.
\end{itemize}
\end{lem}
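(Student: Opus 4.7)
The plan is to argue by contradiction in both parts, using the Krylov--Safonov weak Harnack inequality for nonnegative supersolutions together with the Keller--Osserman interior estimate.

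For part~(i), I will assume $M := \sup_{\rn} w > 0$ and work with the deficit $v := M - w \geq 0$. By the $1$-homogeneity $(H_2)$, $v$ is a nonnegative viscosity supersolution of $-\tilde{F}(D^2 v) \geq h$, where $\tilde{F}(N) := -F(-N)$ is an Isaacs operator with the same ellipticity constants and $h := A w^p/(1 + |x|^2) \geq 0$. If $M$ is attained at some interior point $x^*$, then $v(x^*)=0$, and the strong minimum principle (applied to $v$ as a nonnegative supersolution of $-\tilde{F}=0$) forces $v \equiv 0$, i.e.\ $w \equiv M$; substituting into the equation gives $0 = F(0) \geq A M^p/(1+|x|^2)$, which forces $M=0$, a contradiction. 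Otherwise I pick $x_n$ with $w(x_n) \to M$, so necessarily $R_n := |x_n| \to \infty$, and I will work on the ball $B_{R_n/8}(x_n)$, on which $|x| \leq 2 R_n$, so that $h \geq c A w^p / R_n^2$.

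The core step is to combine two weak Harnack estimates for $v$, both rescaled from the unit ball. The first, homogeneous one (using only $-\tilde F(D^2 v)\geq 0$) gives $\|v\|_{L^\gamma(B_{R_n/8}(x_n))} \leq C R_n^{n/\gamma} v(x_n)$; since $v(x_n) \to 0$, Chebyshev then shows that $\{w \leq M/2\}$ occupies less than half the ball for large $n$, so $\int w^{p\gamma} \geq c M^{p\gamma} R_n^n$ on that ball. Lemma~\ref{QSMP} applied to $v$ with source $h$, after rescaling, then yields
\[
\inf_{B_{R_n/8}(x_n)} v \;\geq\; c\,R_n^{\,2 - n/\gamma}\, \|h\|_{L^\gamma(B_{R_n/8}(x_n))} \;\geq\; c''\, A\, M^p,
\]
where the $R_n$-powers cancel exactly. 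Since $x_n$ is the center of the ball, this contradicts $v(x_n) \to 0$.

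For part~(ii) the assumption $p > 1$ triggers a Keller--Osserman bound. On each ball $B_R(x_0)$, the inequality combined with uniform ellipticity yields $\mathcal{M}^+(D^2 w) \geq F(D^2 w) \geq A(1+(|x_0|+R)^2)^{-1} w^p$, and the classical Keller--Osserman estimate for the Pucci maximal operator (proved by comparison with a radial barrier) gives
\[
w(x_0) \;\leq\; C\, A^{-1/(p-1)} \left(\frac{1 + (|x_0| + R)^2}{R^2}\right)^{1/(p-1)}.
\]
Letting $R \to \infty$ produces a bound on $w(x_0)$ independent of $x_0$, so $w$ is bounded, and part~(i) then forces $w \equiv 0$.

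The main obstacle I anticipate is the scale bookkeeping in part~(i): the radius must be chosen comparable to $|x_n|$, since $h$ scales like $R_n^{-2}$ on such balls and the factors $R_n^{2-n/\gamma}$ and $R_n^{n/\gamma}$ from the two Harnack estimates cancel exactly only for this choice. A smaller radius would lose the lower bound on $\|h\|_{L^\gamma}$, while a larger one would lose control of the measure of $\{w \geq M/2\}$ because the homogeneous weak Harnack bound becomes too weak. A secondary subtlety is that the argument bypasses any Hölder estimate for the subsolution $w$ itself, which need not hold, by transferring all regularity to the supersolution $v$.
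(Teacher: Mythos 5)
Your proof is correct, and part~(ii) is essentially the paper's argument (the Osserman radial barrier comparison, then invoking part~(i)), so I will focus on part~(i), where you take a genuinely different route. The paper proves unboundedness \emph{directly}: it sets $M(R)=\sup_{B_R}w$ and shows $M(2R)\ge M(R)+c$ for all large $R$, by constructing the auxiliary Dirichlet solution $u_R$ of $-\mathcal{M}^-(D^2u_R)=Aw^p/(1+|x|^2)$ on $B_{2R}$, comparing $w\le M(2R)-u_R$, and then bounding $\inf_{B_R}u_R$ from below via the quantitative strong maximum principle (Lemma~\ref{QSMP}) combined with the local maximum principle for the subsolution $w$. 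You instead argue by contradiction, assume $M=\sup_{\rn}w<\infty$, and transfer \emph{all} the regularity onto the deficit $v=M-w\ge0$, which is a nonnegative supersolution of $-\tilde F(D^2v)\ge h$ with $\tilde F(N)=-F(-N)$. Your scheme combines the homogeneous weak Harnack inequality for $v$ (to control the measure of $\{v\ge M/2\}$ on a ball $B_{R_n/8}(x_n)$ of radius comparable to $|x_n|$, via Chebyshev) with the inhomogeneous quantitative strong maximum principle (Lemma~\ref{QSMP}) for $v$; the exact cancellation of the $R_n^{\pm n/\gamma}$ and $R_n^{\pm2}$ powers, as you note, is what forces the choice $R_n\sim|x_n|$. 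This bypasses the Dirichlet construction entirely. The trade-offs: the paper's version gives a quantitative growth rate ($M$ grows at least linearly in $\log R$, i.e.\ $M(R)\gtrsim\log R$), while yours only yields unboundedness, which is all the lemma needs; on the other hand, yours avoids solving an auxiliary boundary value problem and instead leans on the weak Harnack inequality, which the paper does not record in its preliminaries but which is standard (e.g.\ \cite[Theorem~4.8(1)]{CafCab}) and applies to $v$ since $\mathcal{M}^-(D^2v)\le0$. Two small points worth keeping in mind when writing this up: the two small exponents $\gamma$ coming from the weak Harnack estimate and from Lemma~\ref{QSMP} need not coincide, but the argument is indifferent to this since one is used only for the Chebyshev step and the other only for the lower bound on $\inf v$; and the deduction that $|x_n|\to\infty$ when $M$ is not attained should be stated explicitly, since otherwise the reader may wonder whether $R_n$ stays bounded.
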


\begin{proof}
By ($H_1$) the function $w$ is a viscosity solution
on $\mathbb{R}^n$ of
$$\mathcal{M}^+(D^2w)\geq \frac{A}{1+|x|^2}\;w^p.$$

\textit{Proof of i)}.
Assume $w\neq 0$.
We define $M(R)=\underset{B_{R}}{\sup}\;w$, and will show the existence of $c>0$ and $R_0>0$ such that for all $R\geq R_0$
$$M(2R)\geq M(R)+c ,$$
 which implies the statement of i).

For all $x\in \mathbb{R}^n$, we define
$$f(x)=\frac{A}{1+|x|^2}\;w(x)^p .$$
Since $f\geq 0$ and $f\in C(\rn)$, thanks to Lemma \ref{lem_existence_Dirichlet}, there exists a unique viscosity solution $u_R>0$ of
$$
\begin{array}{rclcc}
-\mathcal{M}^-(D^2 u_R)&=&f & \text{ on } &B_{2R}\\
u_R&=&0& \text{ on }  &\partial B_{2R}.
\end{array}
$$
We define on $\overline{B_{2R}}$ the function
$v_R=M(2R)-u_R$, which satisfies
$$
\begin{array}{rclcc}
\mathcal{M}^+(D^2 v_R)&=&f & \text{ on } &B_{2R}\\
v_R=M(2R)&\geq&w& \text{ on }&\partial B_{2R}.
\end{array}
$$
By the comparison principle we obtain
$w\leq v_R$ on $B_{2R}$,
which implies
$$\underset{B_R}{\inf} \,u_R+M(R) \leq M(2R).$$
Now, we define on $\overline{B_{2}}$ the function
$\tilde{u}_R(x)=u_R(Rx), $
which is a viscosity solution of
\begin{equation}
\label{equ_tilde_u_R}
-\mathcal{M}^-(D^2 \tilde{u}_R)=\frac{A\; R^2}{1+R^2|x|^2}\;w(Rx)^p\geq \epsilon \; w(Rx)^p  \text{\qquad  on } B_{2},
\end{equation}
for some $\epsilon>0$.
Since $w\neq 0$,  there exists $R_0>0$ such that
$ \underset{B_{1}}{\sup}\; w(R_0\cdot)> 0.$

Since $\mathcal{M}^+(D^2 w)\geq 0$,  by the local maximum principle applied
to $w(R\cdot)\geq 0$ for $R>0$ (see \cite[Theorem~4.8~(2)]{CafCab}),
for any $q>0$ there exists $C=C(q)>0$ such that for all $R\geq R_0$,
$$\|w(R\cdot)\|_{L^q(B_{2})} \geq C\, \underset{B_1}{\sup}\; w(R\cdot)
\geq C\, \underset{B_{1}}{\sup}\; w(R_0\cdot) =C>0 .$$
However $\tilde{u}_R\geq 0$ satisfies $(\ref{equ_tilde_u_R})$, so
by the quantitative strong maximum principle (see Lemma \ref{QSMP}) applied to \re{equ_tilde_u_R}, there exist $q_0>0$ and $c_0>0$ such that
$$\underset{B_1}{\inf}\; \tilde{u}_R \geq c_0\,\epsilon \;\|w(R\cdot)^p \|_{L^{q_0}(B_{1})} . $$
We choose $q=q_0\,p$ and obtain a constant $c>0$ such that for all $R\geq R_0$ we have
$$\underset{B_1}{\inf}\; \tilde{u}_R \geq c.$$
Since $\underset{B_1}{\inf}\; \tilde{u}_R=\underset{B_R}{\inf}\; u_R  $, this implies
$$M(2R)\geq M(R)+c.$$

\textit{Proof of ii)}. We will show that $w$ is bounded on $\mathbb{R}^n$, which implies $w=0$, by i). This can be proved similarly to \cite{MSS}, we include the full proof for completeness.

  As in \cite{Osserman}, we define the function $w_R\in C^2(B_R)$ by
 $$w_R(x)=C \frac{R^{2\alpha}}{(R^2-|x|^2)^\alpha} $$
 for all $x\in B_R$, where $\alpha=\frac{2}{p-1}.$
 It is easy to see, by direct computation, that if $C>0$ is
large enough then  for all $x\in B_R$
 \begin{equation}
\label{equation_w_R}
\Lambda\; \Delta w_R \leq \frac{A}{1+|x|^2}{w_R}^p
 \end{equation}
 in the classical sense. See for instance the computation on page 15 in \cite{MSS}.

Note $w_R$ is radial and its first and second radial derivatives  are nonnegative. Then $w_R$ is convex, so
 $\mathcal{M}^+(D^2w_R)=\Lambda\; \Delta w_R $.  Hence
\begin{equation}
 \label{equ_w_R}
 \mathcal{M}^+(D^2w_R)\leq \frac{A}{1+|x|^2}{w_R}^p.
 \end{equation}
Since $w_R(x)\underset{x\rightarrow \partial B_R}{\longrightarrow} +\infty$,  there exists $R'<R$ such that $w_R\geq \|w \|_{L^\infty(B_R)} $ on $ B_{R}\setminus B_{R'}$.\\
Assume that $\underset{B_{R'}}{\sup}\; [w-w_R] >0$.
 Since $w\leq w_R$ on $\partial B_{R'}$,  this supremum is attained at some $x_0\in B_{R'}$. On the other hand $w_R\in C^2(B_{R'})$, so $w_R$ is a legitimate test function for \re{equ_type_Lin}, and by the definition of a viscosity subsolution we get
 $$\mathcal{M}^+(D^2w_R(x_0)) \geq  \frac{A}{1+|x_0|^2}\;w(x_0)^p
 > \frac{A}{1+|x_0|^2}\;w_R(x_0)^p,$$
a contradiction with (\ref{equ_w_R}).
 This implies that $w\leq w_R$ on $B_{R'}$ and then on $B_{R}$.

 Now, for any  $x\in \mathbb{R}^n$, we let $R\to +\infty$, and obtain $w(x)\leq \lim_{R\to\infty} w_R(x)=C$. Hence $w$ is bounded on $\mathbb{R}^n$.
\end{proof}

\begin{lem}
 \label{lem_-F(D^2u)geq_V_u^r}
 Assume that $F(D^2u)$ is an Isaacs operator.
Let   $V\in C(\rn)$, $V\gneqq  0$ be such that for all
$\gamma>0$
$$\underset{R\rightarrow +\infty }{\liminf}\;
  \frac{1}{R^n}\int_{B_{2R}\setminus B_R} V^\gamma >0. $$
Assume that
\begin{equation}
 \alpha ^*(F)\le 0\qquad\mbox{or}\qquad 0\leq r\leq 1+\frac{2}{\alpha^*(F)} .
\end{equation}
Let $z\geq 0$ be a viscosity solution of
$$ -F(D^2 z)\geq V(x) \,z^r, \text{ \qquad  } x\in \rn.$$
Then
$z=0.$
\end{lem}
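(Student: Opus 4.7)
My plan is to split according to the two alternatives in the hypothesis on $\alpha^*(F)$.

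If $\alpha^*(F)\le 0$, then $Vz^r\ge 0$ makes $z$ a nonnegative supersolution of $-F(D^2z)\ge 0$, so Lemma \ref{lem_nonexistence_superharmonic_if_alpha*<=0}(i) forces $z$ to be a constant $c\ge 0$. Plugging back in gives $0\ge V c^r$ on $\rn$, and $V\gneqq 0$ forces $c=0$.

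In the main case $\alpha^*(F)>0$ and $0\le r\le 1+2/\alpha^*(F)$, I argue by contradiction: if $z\not\equiv 0$ then the strong maximum principle gives $z>0$ on $\rn$, and Lemma \ref{lem_nonexistence_superharmonic_if_alpha*<=0}(ii) provides the starting polynomial lower bound $z(x)\ge m\,|x|^{-\alpha^*(F)}$ on $|x|\ge 1$. The key step is a bootstrap based on the quantitative strong maximum principle (Lemma \ref{QSMP}). Rescaling $\tilde z(x):=z(Rx)$ to the unit scale and applying Lemma \ref{QSMP} on the fixed compact annulus $\overline{B_2}\setminus B_1\subset B_3$, then changing variables back, yields
\begin{equation}\label{plan_bootstrap}
\inf_{B_{2R}\setminus B_R} z \;\ge\; C\, R^{2-n/\gamma}\,\Bigl(\int_{B_{2R}\setminus B_R} V^\gamma z^{r\gamma}\Bigr)^{1/\gamma}
\end{equation}
with $C,\gamma>0$ independent of $R\ge 1$. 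Inserting an inductive lower bound $z(y)\ge c_k\,|y|^{-\beta_k}$ on $|y|\ge 1$ and the integral hypothesis $\liminf_{R\to\infty} R^{-n}\int_{B_{2R}\setminus B_R} V^\gamma>0$ yields an improved lower bound of the same form, with $\beta_{k+1}=r\beta_k-2$ and $c_{k+1}\ge A\,c_k^r$ for a fixed $A>0$; the base case is $\beta_0=\alpha^*(F)$, $c_0=m$, and the assumption $r\le 1+2/\alpha^*(F)$ is exactly what makes the exponent sequence $\beta_k$ non-increasing.

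In the strict regime $r<1+2/\alpha^*(F)$, the sequence $\beta_k$ eventually becomes negative (for $r<1$ it converges to $-2/(1-r)<0$; for $r\ge 1$ it tends to $-\infty$), so $z$ acquires a positive polynomial lower bound $z(x)\ge c\,|x|^N$ on $|x|\ge 1$ with $N>0$, which can moreover be pushed arbitrarily large by iterating once more. This contradicts the weak Harnack inequality for the nonnegative supersolution $z$ of $-F(D^2z)\ge 0$, namely $\bigl(|B_R|^{-1}\int_{B_R} z^p\bigr)^{1/p}\le C\,z(0)$ for some $p>0$, taken at $R$ large. The main obstacle is the critical case $r=1+2/\alpha^*(F)$: the exponents stabilize at $\beta_k\equiv\alpha^*(F)$ and the iteration reduces to the coefficient recursion $c_{k+1}\ge A c_k^r$ with $r>1$, which blows up only when the starting constant exceeds the scale-invariant threshold $A^{-1/(r-1)}$. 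To always secure this one must boost $m$ by preliminary iterations on large annuli, fully exploiting the integral hypothesis ``for all $\gamma>0$'' and the strict positivity $V\gneqq 0$, in the spirit of the critical-exponent analysis carried out in the proof of Theorem~\ref{lioueq} in \cite{armsir}.
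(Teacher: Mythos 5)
Your split on $\alpha^*(F)\le0$ versus $\alpha^*(F)>0$ matches the paper, and the easy case is handled identically. In the main case, the bootstrap inequality \eqref{plan_bootstrap} is a correct consequence of the rescaling and the quantitative strong maximum principle (Lemma~\ref{QSMP}), and the exponent recursion $\beta_{k+1}=r\beta_k-2$ does drive $\beta_k$ to $-\infty$ (or below zero when $r<1$) whenever $r<1+2/\alpha^*(F)$ \emph{strictly}; with a suitably large fixed reference radius one can also control the constant recursion $c_{k+1}\ge Ac_k^r$, so the strictly subcritical range is fine (the paper gets this more directly: for $r\le1$ from $m(R)\ge CR^2m(R)^r$ and the monotonicity of $m$, and for $1<r<1+2/\alpha^*(F)$ by pitting the upper bound $m(R)\le CR^{-2/(r-1)}$ against the lower bound $m(R)\ge cR^{-\alpha^*(F)}$ of Lemma~\ref{lem_nonexistence_superharmonic_if_alpha*<=0}(ii)).

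The genuine gap is the critical case $r=1+2/\alpha^*(F)$, and your proposed fix does not close it. You correctly observe that the exponent is then stuck at $\beta_k\equiv\alpha^*(F)$ and the recursion $c_{k+1}\ge Ac_k^r$ with $r>1$ blows up only if the starting constant already exceeds the scale-invariant threshold $A^{-1/(r-1)}$. But the remedy you sketch, ``boost $m$ by preliminary iterations on large annuli'', is circular: at criticality every such preliminary iteration is governed by \emph{the same} fixed-point recursion at the same fixed exponent $\alpha^*(F)$, and moving to larger annuli does not change the threshold because the whole scheme is scale-invariant. The extra freedom ``for all $\gamma>0$'' does not help either, since the exponent $\gamma$ in Lemma~\ref{QSMP} is a fixed structural constant of the operator. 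The paper resolves criticality by a qualitatively different, \emph{additive} increment argument: it introduces $\tilde z_R=R^{\alpha^*(F)}z(R\cdot)$ and $\tilde m(R)=\inf_{\partial B_1}\tilde z_R/\Phi$ where $\Phi$ is the $(-\alpha^*(F))$-homogeneous solution, proves the two-sided a priori bound $c/M_1\le\tilde m(R)\le C$, applies Lemma~\ref{lem_u-v} to write a Pucci inequality for the nonnegative difference $\tilde z_R-\tilde m(R)\Phi$, and then applies Lemma~\ref{QSMP} to that \emph{difference} to obtain $\tilde m(2R)\ge\tilde m(R)+\mathrm{const}>0$. This additive increment is independent of the starting value and forces $\tilde m$ to be unbounded, contradicting the upper bound. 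You would need to replace your multiplicative recursion by some version of this subtraction device to complete the critical case.
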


\begin{proof}
The case $\alpha ^*(F)\leq 0$ is obvious since the only viscosity solutions of
$ -F(D^2 z)\geq 0$
are the constants  (see
Lemma \ref{lem_nonexistence_superharmonic_if_alpha*<=0} i)) and because $V\geq 0,\;V\not\equiv 0$.
Hence we can suppose that
$\alpha ^*(F)>0$.

Assume for contradiction that $z\not\equiv0$. Since $z\geq 0$ and $-F(D^2z)\geq 0$,  by the strong maximum principle (see \cite[Proposition 4.9]{CafCab}) we have $z>0$ on $\rn$.

First, we note that the hypothesis on $V$ implies, for each $\gamma>0$, the existence of $R_0=R_0(\gamma)>0$ and $c_0=c_0(\gamma)>0$ such
that for all $R\geq R_0$,
$$ \left(\int_{B_2\setminus B_1}V(Rx)^\gamma dx\right)^{\frac{1}{\gamma}}\geq c_0.$$

Let $\gamma=\gamma(F,n)>0$ be the constant given in the quantitative strong maximum principle (see Lemma \ref{QSMP}) and $R_0=R_0(\gamma)$. Let $R\geq R_0$. We set
$ z_R(x)=z(Rx)$
and
$$m(R)=\underset{ B_R}{\inf}\; z=\underset{ B_1}{\inf}\; z_R.$$
It is easy to see that $z_R$ is a viscosity solution of
$$-F(D^2 z_R)\geq R^2V(Rx) {(z_R)}^r, \text{\qquad  }x\in \rn. $$
By the quantitative strong maximum principle (see Lemma \ref{QSMP}), there exists  $C>0$
 such that
$$\underset{B_1}{\inf}\; z_R \geq C \, \;\|R^2V(R\cdot) {z_R}^r \|_{L^{\gamma}(B_{1})} \geq C R^2 m(R)^r
\left(\int_{B_1}V(Rx)^\gamma dx\right) ^{\frac{1}{\gamma}}.$$
Hence, for all $R\geq R_0$
$$m(R)\geq C R^2  c_0(\gamma)\; m(R)^r. $$

From this point on the proof is similar to the one given in \cite{armsir} and in \cite{MSS} for the case when the elliptic operator is the Laplacian. Thus we only give a sketch of the proof, with more details in the third case below, where some differences appear.

\textbf{First case}. Assume $0\leq r\leq 1$.
For all $R\geq R_0$, since $m(R)>0$, we have
$$m(R_0)^{1-r}\geq m(R)^{1-r} \geq C R^2  c_0(\gamma) $$
because $R\mapsto m(R)$ is nonincreasing.
We then obtain a contradiction when  $R\to\infty$.

\textbf{Second case}. Assume $1<r<\frac{\alpha ^*(F)+2}{\alpha^*(F)}$, which is equivalent to
$\frac{2}{r-1} > \alpha ^*(F)$.
From the argument used in the previous case, we deduce that for all $R\geq R_0$
$$ m(R) \leq CR^{-\frac{2}{r-1}}, $$
for some $C>0$.  On the other hand,  for any $R\geq 1$,
$$ m(R)\geq {c}{R^{-\alpha ^*(F)}},$$
for some $c>0$ (this follows from Lemma \ref{lem_nonexistence_superharmonic_if_alpha*<=0} ii)). The last two inequalities  yield a contradiction, by letting $R\to+\infty$.

 \textbf{Third case}. Assume $r=\frac{\alpha ^*(F)+2}{\alpha^*(F)}$, which is equivalent to
$\frac{2}{r-1} = \alpha ^*(F) .$
As a consequence of this equality, if we set
 $$\tilde{z}_R=R^{\alpha^*(F)}z_R, \qquad \mbox{and}\qquad
\tilde{m}(R):=\underset{\partial B_R}{\inf}  \;\frac{z}{\Phi}
=\underset{\partial B_1}{\inf}  \;\frac{\tilde{z}_R}{\Phi},$$
and we can check that
 \begin{equation}
 \label{encadrement_m_tilde_R}
\frac{c}{M_1}\leq \tilde{m}(R)\leq  C.
\end{equation}

It is easy to see that $\tilde{z}_R$ is a viscosity solution of
$$ -F(D^2\tilde{z}_R)\geq V(Rx){\tilde{z}_R}^r, \text{\qquad }x\in \rn.$$
Hence $-F(D^2\tilde{z}_R)\geq 0$ so, by Lemma \ref{lem_nonexistence_superharmonic_if_alpha*<=0} ii), we deduce that
 \begin{equation}
 \label{minoration_z_bar_R}
 {z}_R- \tilde{m}(R)\Phi\geq 0 \text{\qquad on } \rn\setminus B_1.
 \end{equation}
Since
$ F(D^2(\tilde{m}(R)\Phi))=\tilde{m}(R)\;F(D^2\Phi)=0$,
and
$$ F(D^2\tilde{z}_R)\leq -V(Rx){\tilde{z}_R}^r,$$
by applying Lemma \ref{lem_u-v} we get
$$-\Mm\left(D^2\left[\tilde{z}_R-\tilde{m}(R)\Phi \right]\right)=\Mp\left(D^2 \left[\tilde{m}(R)\Phi-\tilde{z}_R\right]\right)\geq V(Rx){\tilde{z}_R}^r. $$
We apply the quantitative strong maximum principle (Lemma \ref{QSMP}) to the operator $\Mm$ with $\Omega=B_5\setminus \overline{B_1}$ and $K=\overline{B_4}\setminus B_2$. We find $\gamma^->0$ and
$c_->0$ such that
$$ \underset{\overline{B_4}\setminus B_2}{\inf}\; \left[\tilde{z}_R-\tilde{m}(R)\Phi\right]\geq c_-
\left(\int_{\overline{B_4}\setminus B_2}V(Rx)^{\gamma^-} {\tilde{z}_R(x)}^{r\gamma^-}\;dx\right)^{\frac{1}{\gamma^-}}.$$
By the $[-\alpha^*(F)]$-homogeneity of $\Phi$  we have
$ \Phi\geq c_1$  on $ \overline{B_4}\setminus B_2,$
which implies, by (\ref{encadrement_m_tilde_R}) and (\ref{minoration_z_bar_R}), that
$$\tilde{z}_R\geq {c_2} \text{\qquad on } \overline{B_4}\setminus B_2.$$
Hence we obtain
$$ \underset{\overline{B_4}\setminus B_2}{\inf}\; \left[\tilde{z}_R-\tilde{m}(R)\Phi\right]
\geq c_3\left(\int_{\overline{B_4}\setminus B_2 }V(Rx)^{\gamma^-}\;dx\right)^{\frac{1}{\gamma^-}}\geq c_4>0$$
for $R$ large enough, from the hypothesis on $V$.
Therefore, for $R$ large enough and
for all $|x|=1$ we have
$$\tilde{z}_R(2x)\geq \tilde{m}(R)\Phi(2x)+c_1, $$
from which we deduce
$$ \tilde{m}(2R)\geq \tilde{m}(R)+\frac{c_1 2^{\alpha^*(F)}}{M_1},$$
so $\tilde{m}(R)$ is unbounded, a contradiction.\end{proof}

 The previous theorem can obviously be applied to any constant function $V$, but also for any nontrivial nonnegative subsolution, as we show in the
  following lemma.
 \begin{lem}
\label{lem_liminf_V_sousharmonique}
 Let $F(D^2u)$ be an Isaacs operator.
Let $V\in C(\rn)$, $V\gneqq 0$ be such that
$$F(D^2 V)\geq 0\qquad\mbox{on }\;\rn. $$
Then, for any $\gamma >0$,
$$\underset{R\rightarrow +\infty }{\liminf} \frac{1}{R^n}\int_{B_{2R}\setminus B_R} V^\gamma >0. $$
\end{lem}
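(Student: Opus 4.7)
The plan is to combine the weak maximum principle (which gives pointwise lower bounds of $V$ on large spheres) with the local maximum principle for subsolutions (which converts a single pointwise value into an $L^\gamma$-bound on a surrounding ball). Both tools apply to the Pucci maximal operator $\mathcal{M}^+$, which dominates $F$ from above by uniform ellipticity.

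First I would observe that $1$-homogeneity of $F$ forces $F(0)=0$, and then $(H_1)$ yields $F(M)\le \mathcal{M}^+(M)$ for every symmetric $M$. Consequently, $F(D^2V)\ge 0$ in the viscosity sense implies $\mathcal{M}^+(D^2V)\ge 0$ on $\mathbb{R}^n$, so $V$ is a nonnegative continuous viscosity subsolution of $\mathcal{M}^+$. Since $V\gneqq 0$, there exists $x_0\in\mathbb{R}^n$ with $m:=V(x_0)>0$. The weak maximum principle for subsolutions of $\mathcal{M}^+$, applied on each ball $B_\rho$ with $\rho>|x_0|$, gives
$$\max_{\partial B_\rho}V=\max_{\overline{B_\rho}}V\ \ge\ V(x_0)=m.$$
In particular, for every $R\ge R_1:=2|x_0|/3$ I can select $y_R\in\partial B_{3R/2}$ with $V(y_R)\ge m$; the ball $B_{R/4}(y_R)$ then sits inside $B_{7R/4}\setminus B_{5R/4}\subset B_{2R}\setminus B_R$ by a direct triangle-inequality check.

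Finally I would apply the local maximum principle for viscosity subsolutions of $\mathcal{M}^+$ (cf.\ \cite[Theorem 4.8]{CafCab}, used in the same spirit in the proof of Lemma \ref{lem_type_Lin} above) on the ball $B_{R/4}(y_R)$: for the given exponent $\gamma>0$ there exists $C=C(\gamma,n,\lambda,\Lambda)$ such that
$$V(y_R)\ \le\ \sup_{B_{R/8}(y_R)}V\ \le\ C\left(\frac{1}{R^n}\int_{B_{R/4}(y_R)}V^\gamma\right)^{1/\gamma}.$$
Combining this with $V(y_R)\ge m$ and the inclusion above yields
$$\frac{1}{R^n}\int_{B_{2R}\setminus B_R}V^\gamma\ \ge\ \frac{1}{R^n}\int_{B_{R/4}(y_R)}V^\gamma\ \ge\ (m/C)^\gamma\ >\ 0$$
for every $R\ge R_1$, which is exactly the stated $\liminf$ bound.

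I do not anticipate any significant obstacle here: the ingredients (weak and local maximum principles for viscosity subsolutions of $\mathcal{M}^+$) are classical and already invoked earlier in the paper, while the rest is the elementary geometric observation that a ball of radius $R/4$ centered at distance $3R/2$ from the origin lies inside the annulus $B_{2R}\setminus B_R$.
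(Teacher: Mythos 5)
Your argument is correct and uses the same two ingredients as the paper's proof, namely the local maximum principle of Caffarelli--Cabr\'e (Theorem 4.8(2) in \cite{CafCab}) and the comparison/weak maximum principle for $\mathcal{M}^+$-subsolutions; the only cosmetic difference is that the paper rescales $V(R\cdot)$ onto the fixed annulus $B_2\setminus B_1$ and observes $\sup_{\partial B_{5R/3}}V=\sup_{B_{5R/3}}V$ is eventually positive, whereas you pick a point $y_R\in\partial B_{3R/2}$ with $V(y_R)\ge m$ and apply the local maximum principle on the ball $B_{R/4}(y_R)\subset B_{2R}\setminus B_R$. Both are valid and yield the same conclusion.
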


\begin{proof}
 This is a consequence of \cite[Theorem 4.8 (2)]{CafCab}.
Indeed, if
we apply the latter to
$V_R=V(R\cdot)\geq 0$ for $R>0$, then  for any $\gamma>0$
there exists $C=C(\gamma)>0$ such that for all $R>0$,
$$\left( \frac{1}{R^n}\int_{B_{2R}\setminus B_R}
V^\gamma\right)^{\frac{1}{\gamma}}= \left( \int_{B_{2}\setminus B_1} V(Rx)^\gamma
\,dx\right)^{\frac{1}{\gamma}}
\geq  C \underset{B_{\frac{5}{3}R}\setminus B_{\frac{4}{3}R}}{\sup}V
\geq C \underset{\partial B_{\frac{5}{3}R}}{\sup}V
= C \underset{ B_{\frac{5}{3}R}}{\sup}V, $$
where the last equality follows from the comparison principle and $F(D^2
V)\geq 0. $
But since $V\not\equiv 0$, there exists $R_0>0$ such that
$\underset{ B_{\frac{5}{3}R_0}}{\sup}V>0. $\end{proof}

\section{Proportionality results for systems on $\rn$ or a cone}\label{sect5}

\subsection{Case of a cone}

In this subsection, we fix a cone
$ \co=\{tx, t>0, x\in \omega\}$,
 where
 $\omega$ is  a $C^2$-smooth subdomain of the unit sphere $S_1$.
We will use the following notation $$B_R^+=B_R\cap \co,\qquad S_R^+=S_R\cap \co,$$
and, given an Isaacs operator $F(D^2u)$, we will denote by $\Psi^\pm\in C\left(\overline{\co}\setminus \{0\}\right)$  the positive solutions  of
\begin{equation}
\label{}
\left\{\ {\alignedat2
 -F(D^2 \Psi^\pm)&=0,&\qquad \text{ in } \co\\
 \Psi^+&=0,&\qquad \text{ on  } \partial \co \setminus \{0\}
 \endalignedat}\right.
 \end{equation}
normalized so that $\Psi^\pm(x_0)=1$ for some given point $x_0\in\co$, and such that
$$\Psi^\pm(x)>0 \text{  on $\co$ \qquad and \qquad } \Psi^\pm(x)=t^{\alpha^\pm}\Psi^\pm(tx) \text{\qquad for all }t>0,\; x\in \co.$$
Here $\alpha^+=\alpha^+(F,\co)>0>\alpha^-=\alpha^-(F,\co)$
are uniquely determined.
For more details on these functions and their construction, see \cite{ASS}. Recall that when $F$ is the Laplacian and $\co$ is the half-space $\{x_n>0\}$, we have $\Psi^+(x) = x_n|x|^{-n}$, and  $\Psi^-(x) = x_n$.

We next recall the following Phragm\`en-Lindelh\"of principle for fully nonlinear equations, which is
 a particular case of the results in \cite{CDV} or \cite{ASS}.

\begin{lem}
\label{lem_phragmen_lindelhof}
Assume that $\F$ is an Isaacs operator.
Let $w\in C(\overline{\co})$ be a bounded viscosity solution  of
\begin{equation}
\label{equ_phragmen_1}
\F[w]\geq 0 \text{\qquad on }\co
\end{equation}
\begin{equation}
\label{equ_phragmen_2}
w\leq 0 \text{\qquad on }\partial \co.
\end{equation}
 Then
$ w\leq 0$ in $\co$.
\end{lem}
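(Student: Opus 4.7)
The plan is a proof by contradiction using a standard Phragmén--Lindelöf barrier built from the growing homogeneous solution $\Psi^-$. Assume for contradiction that $M:=\sup_{\co}w>0$; since $w$ is bounded and $w\le 0$ on $\partial\co$, $M$ is a finite positive number. I would take as barrier a small multiple $v_\epsilon:=\epsilon\Psi^-$ of the $(-\alpha^-)$-homogeneous positive solution, where $\alpha^-<0$. Because $\F$ is $1$-homogeneous by $(H_2)$ and $-F(D^2\Psi^-)=0$, one has $\F[v_\epsilon]=0$, so $v_\epsilon$ is simultaneously an $\F$-sub- and supersolution. Moreover $\Psi^-(x)=|x|^{-\alpha^-}\Psi^-(x/|x|)$ grows like $|x|^{|\alpha^-|}$ at infinity along every ray interior to $\co$, so that $v_\epsilon$ eventually dominates the bounded function $w$ far from the origin.

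The next step is to apply the viscosity comparison principle for $\F$ on the bounded subdomain $D_R=\co\cap B_R$. On $\partial\co\cap B_R$ one has $w\le 0=v_\epsilon$, and on $S_R\cap\co$ one has $v_\epsilon=\epsilon R^{|\alpha^-|}\Psi^-(\cdot/|\cdot|)$, which exceeds $M$---and hence $w$---once $R$ is chosen large in terms of $\epsilon$, except possibly near the rim $S_R\cap\partial\co$ where $\Psi^-$ degenerates to $0$. Once the boundary inequality $w\le v_\epsilon$ on $\partial D_R$ is secured, the comparison principle for uniformly elliptic Isaacs operators gives $w\le\epsilon\Psi^-$ throughout $D_R$. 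Fixing any $x\in\co$ and letting first $R\to\infty$ (the bound is uniform in $R$ once $R$ is large) and then $\epsilon\searrow 0$, one obtains $w(x)\le 0$, contradicting $M>0$.

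The main obstacle is handling the \emph{corner} $S_R\cap\partial\co$, where both $w$ and $\Psi^-$ vanish, so that the inequality $w\le\epsilon\Psi^-$ cannot be deduced merely from the bound $w\le M$ together with a lower bound on $\Psi^-$. The standard remedy, and the one I would adopt, is to replace $\Psi^-$ by the analogous solution $\widetilde\Psi^-$ built on a slightly enlarged cone $\mathcal{C}_{\tilde\omega}$ with $\omega\subset\subset\tilde\omega$; then $\widetilde\Psi^-$ is bounded below by a positive constant on $\overline\omega$, so on the whole rim $S_R\cap\overline\co$ it exceeds $M$ for $R$ large in terms of $\epsilon$, while on $\partial\co\cap B_R$ it remains strictly positive and still dominates $w\le 0$. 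The required properties of $\widetilde\Psi^-$, in particular that its scaling exponent $\tilde\alpha^-(F,\mathcal{C}_{\tilde\omega})$ remains negative for $\tilde\omega$ close enough to $\omega$, are part of the theory developed in \cite{ASS}. With this modification the boundary comparison on $\partial D_R$ is restored, and the contradiction argument sketched above goes through verbatim.
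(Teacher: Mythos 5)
Your strategy --- comparing $w$ with a small multiple $\epsilon\Psi^-$ of the growing $(-\alpha^-)$-homogeneous barrier, handling the degeneracy at the rim $S_R\cap\partial\co$ by switching to the analogous barrier $\widetilde\Psi^-$ built on an enlarged cone $\mathcal{C}_{\tilde\omega}\supset\supset\co$, then sending $R\to\infty$ and $\epsilon\searrow 0$ --- is exactly the classical Phragm\`en--Lindel\"of construction. The paper itself does not reprove the lemma but simply cites \cite{CDV} and \cite{ASS}; your sketch is in effect a reconstruction of the argument behind those references, and the corner fix you describe is the correct one.

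There is, however, a real gap in the step ``$\F[v_\epsilon]=0$, so $v_\epsilon$ is simultaneously an $\F$-sub- and supersolution''. That identity holds only when $\F[u]=F(D^2u)$ is pure second-order and autonomous. The lemma, as stated, admits a general Isaacs operator $F(D^2u,Du,x)$ satisfying $(H_1)$--$(H_2)$, and the paper applies it with $\F[\cdot]=\mathcal{M}^+(D^2\cdot)+B|D\cdot|$, which is what Lemma \ref{lem_|u-Kv|} produces. For such $\F$, condition $(H_1)$ together with $F(D^2\Psi^-)=0$ gives only $\F[\epsilon\Psi^-]\le\epsilon\bigl(\mathcal{M}^+(D^2\Psi^-)+B|D\Psi^-|\bigr)=\epsilon B|D\Psi^-|$, which is strictly positive in $\co$, so $\epsilon\Psi^-$ (or $\epsilon\widetilde\Psi^-$) is \emph{not} a supersolution and the comparison step $w\le\epsilon\Psi^-$ on $D_R$ cannot be drawn. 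This is not a cosmetic issue: in the half-space the bounded nonnegative function $1-e^{-(B/\lambda)\,x_n}$ vanishes on $\{x_n=0\}$ and is a classical solution of $\mathcal{M}^+(D^2w)+B|Dw|=0$, which shows that a purely homogeneous barrier cannot absorb a constant-coefficient drift. To push your argument through with $B>0$ you would need a barrier adapted to the gradient term (for instance, the construction in \cite{ASS} treats the scale-invariant coefficient $b/|x|$), or a separate step to eliminate the drift. As written, your proof establishes the lemma for $\F[u]=F(D^2u)$, and, after replacing ``$\F[v_\epsilon]=0$'' by ``$\F[v_\epsilon]\le\epsilon\,\mathcal{M}^+(D^2\Psi^-)=0$'' with $\Psi^-$ taken for $\mathcal{M}^+$, also for $x$-dependent $F$ with no gradient dependence --- but not in the generality in which the lemma is stated and used.
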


\begin{proof}
This is a special case of \cite[Theorems~A]{CDV} or \cite[Theorems~1.6-1.7]{ASS} and their proofs. Using the notation of
\cite{ASS}, we set $\Omega=\Omega'=\co$, so
$\mathcal{D}=\co$ and we choose $\mathcal{D}'=\co$.
Since $w$ is bounded and $\alpha^-<0<\alpha^+$, then condition (1.12) of
\cite[Theorem 1.7]{ASS}
is clearly satisfied.
\end{proof}

\begin{proof}[Proof of Theorem \ref{thm_proportionnalite_hn}: ] By combining Lemma \ref{lem_|u-Kv|} and Lemma \ref{lem_phragmen_lindelhof} we get $|u-Kv|\le 0$ in $\co$, i.e. $u\equiv Kv.$
\end{proof}

We now turn to the proof of Theorem \ref{newhs}. The proof of the corresponding result for the Laplacian in \cite{MSS} depended heavily on the use of half-spherical means. Obviously, this tool cannot be used for more general operators, so a different proof is needed. Our proof here is shorter and uses the maximum principle and the boundary Harnack inequality.
\medskip

{\it Proof of Theorem \ref{newhs} (i)}.
We define the quotient
$$q_u(r) = \inf_{ S_r^+} \frac{u}{\Psi^-}.$$
Observe that the Hopf lemma applied to $-\F[u]\ge 0$ implies $\frac{\partial u}{\partial \nu}\ge c_r>0$ on $S_r^+\cap\partial \co$, while the boundary Lipschitz estimates applied to $-\F[\Psi^-]= 0$ imply $\frac{\partial \Psi^-}{\partial \nu}\le C_r$ on $S_r^+\cap\partial \co$ (here $\nu$ denotes the interior normal to $\partial \co$), therefore $q_u(r)>0$.

Since $\Psi^-=0$ on $\partial \co$ and $u\ge0$ we have  $u\ge q(r) \Psi^-$ on the boundary $\partial B_r^+$. By the comparison principle we have this inequality in $B_r^+$. In other words, $q_u(r) = \inf_{B_r^+} \frac{u}{\Psi^-}$. Therefore the function $q_u$ is decreasing in $r$, and hence has a limit as $r\to \infty$, which we denote with $L_u\ge 0$. In particular, we have $u\ge L_u\Psi^-$ in  $\co$.

Similarly, we define $L_v\ge0$ such that $v\ge L_v\Psi^-$ in  $\co$.
Now, if both $L_u>0$ and $L_v>0$ we get from (\ref{mainsyst})
$$
-\F[u]\ge c(\Psi^-)^{\sigma}
$$
for some $\sigma>0$. This contradicts Theorem \ref{lioucone} since $\Psi^-\ge c|x|^{-\alpha^-}$ in any proper subcone of $\co$.

Let us assume $L_u=0$. We are going to prove that $u\le Kv$. Set $w=(u-Kv)_+$. Then we know that
$$
-\F[w]\le 0 \le -F[u]\quad \mbox{and}\quad w\le u\qquad\mbox{in }\;\co.
$$
Let $z_R$ be a solution of
$$
\left\{
\begin{array}{rclcc}
\F[z_R] &=&0&\mbox{in}& B_R^+\\
z_R&=& w&\mbox{on}&\partial B_R^+.
\end{array}
\right.
$$
By the global $C^{1,\alpha}$-estimates (see for instance Proposition 2.2 in \cite{ASS}) we see that the sequence $z_R$ is locally uniformly bounded as $R\to\infty$ in each fixed compact of $\overline{\co}$. Therefore we can pass to the limit and get a function $z$ such that
$$
w\le z\le u\quad \mbox{and}\quad F(D^2z) =0\qquad\mbox{in }\;\co.
$$

We now define
$$
Q_w(r) := \sup_{S_r^+} \frac{w}{\Psi^-},
$$
and we see that $Q_w(r)$ is increasing in $r$, by an argument similar to the one we used for $q_u$.

On the other hand
$$
Q_w(r) =  \sup_{B_r^+} \frac{w}{\Psi^-} \le \sup_{ B_r^+} \frac{z}{\Psi^-} \le C\inf_{ B_r^+} \frac{z}{\Psi^-} \le C\inf_{ B_r^+} \frac{u}{\Psi^-} = Cq_u(r),
$$
where we used the boundary Harnack inequality for the functions $z$ et $\Psi^-$ -- see for instance Proposition 2.1 in \cite{ASS}. Now $\lim_{r\to\infty} q_u(r)=L_u=0$ means the nonnegative increasing function $Q_w$ tends to zero as $r\to \infty$, that is, $Q_w\equiv0$.\smallskip

{\it Proof of Theorem \ref{newhs} (ii)}. To deduce (ii) from (i) we use exactly the same argument as in the proof of Theorem 2.8 in \cite{MSS}, replacing the reference to Lemma 3.1 there by a reference to Theorem~\ref{lioucone} above. Observe the conditions (2.6) and (2.7) in Theorem 2.8 in \cite{MSS} are exactly our \re{condition_rnsd}--\re{condition_rnsd2} with $\alpha^+=n-1$, $\alpha^-=-1$. \hfill $\Box$

\subsection{Case of  the whole space}

In this section, we focus on the system
\begin{equation}
\label{*system_Spqr}
\left\{\quad{\alignedat2
-F(D^2 u)&=u^rv^p[av^q-cu^q] &\text{\qquad on }\mathbb{R}^n\\
 -F(D^2 v)&=v^ru^p[bu^q-dv^q]&\text{\qquad on }\mathbb{R}^n.
 \endalignedat}\right.
\end{equation}
In our study of \eqref{*system_Spqr} we  always assume that the real parameters $a,b,c,d,p,q,r$ satisfy
\begin{equation}\label{Hyp_abcdpqr1}
a, b>0,\quad c,d\ge 0, \qquad p,r\ge 0,\quad q>0,\quad q\ge |p-r|.
\end{equation}

The last hypothesis provides the following result, proved in the appendix of  \cite{MSS}.
\begin{propo}
\label{Prop-K}
Assume (\ref{Hyp_abcdpqr1}).
\smallskip

\noindent (i) Then  the nonlinearities $f$ and $g$ in the system \eqref{*system_Spqr} satisfy \eqref{condition_fg} for some $K>0$. 
\smallskip

 \noindent (ii) Assume moreover that $ ab \geq cd$. Then the number $K>0$ is unique.
 We have
  $K=1$ if and only if $a+d=b+c$ and  $K>1$ if and only
if $a+d>b+c$. In addition, if $ab>cd$  (resp. $ab=cd$),
then $a-cK^q>0$  (resp. $=0$),  $bK^q-d>0$ (resp. $=0$).
\end{propo}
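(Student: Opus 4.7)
My plan is to reduce the sign condition in \re{condition_fg} to solving a scalar algebraic equation in $K$, then to establish existence by the intermediate value theorem and verify the global sign condition by a monotonicity/factorization argument. Expanding $f(u,v) - Kg(u,v)$ along the line $u = Kv$ forces, for the inequality $(f-Kg)(u-Kv) \le 0$ to hold (since the second factor changes sign across $u = Kv$), the equation
\[
\phi(K) := aK^r + dK^{p+1} - cK^{r+q} - bK^{p+q+1} = 0.
\]

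Existence of a positive root of $\phi$ comes from examining the endpoints. As $K \to 0^+$, $\phi(K) > 0$ because $a > 0$ (either $\phi(K) \sim aK^r$ if $r>0$, or $\phi(K) \to a$ when $r = 0$). As $K \to +\infty$, the hypothesis $q \ge |p-r|$ gives $r \le p + q$, and hence $p+q+1$ strictly exceeds $r$, $r+q$, and $p+1$, so the term $-bK^{p+q+1}$ (with $b>0$) drives $\phi(K) \to -\infty$; the sub-case $c > 0$ with $r + q \ge p + q + 1$ is handled similarly using $-cK^{r+q}\to -\infty$. The intermediate value theorem supplies $K > 0$ with $\phi(K) = 0$.

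To verify \re{condition_fg} globally I would substitute $s = u/(Kv)$ (for $v > 0$; signs extended by continuity to the relevant domain of definition) to obtain $f - Kg = v^{r+p+q}\,h(s)$ and $u - Kv = Kv(s-1)$, where
\[
h(s) := K^r s^r\bigl(a - c(Ks)^q\bigr) - K^{p+1} s^p\bigl(b(Ks)^q - d\bigr).
\]
The condition then reads $h(s)(s-1)\le 0$ on $s \ge 0$, and $h(1) = 0$ by the choice of $K$. I would establish the required sign pattern by writing $h$ as a difference of a decreasing function $K^r s^r(a - c(Ks)^q)$ and an increasing function $K^{p+1}s^p(b(Ks)^q - d)$; the monotonicity of each factor, after appropriate regrouping of the exponents $r, r+q, p+1, p+q+1$, is enforced precisely by $q \ge |p-r|$, which forces $h$ to cross zero only once on $(0,\infty)$, giving the inequality.

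For part (ii), uniqueness under $ab \ge cd$ follows by showing every root of $\phi$ must satisfy $d/b \le K^q \le a/c$ (outside this interval, $K^r(a - cK^q)$ and $K^{p+1}(d - bK^q)$ are both nonzero of the same strict sign, so $\phi \neq 0$), and that $\phi$ is strictly monotone on this interval. Since $\phi(1) = a + d - b - c$, this gives $K = 1 \iff a + d = b + c$ and $K > 1 \iff a + d > b + c$. When $ab > cd$, the interval $[d/b, a/c]$ is nondegenerate and the unique root lies strictly inside, so $a - cK^q > 0$ and $bK^q - d > 0$; when $ab = cd$ the interval collapses to the point $K^q = a/c = d/b$, pinning $a - cK^q = 0 = bK^q - d$. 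The main obstacle I anticipate is the global sign verification in the third step: the intermediate value theorem delivers only the local fact $h(1) = 0$, yet \re{condition_fg} demands control of the sign of $h$ on all of $[0,\infty)$, and the precise role of the constraint $q \ge |p-r|$ in enforcing a single sign change is the technical heart of the argument.
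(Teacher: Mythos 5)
Your overall strategy is the right one, and it matches what the paper does (the paper defers this proof to the appendix of \cite{MSS}, which argues exactly via the algebraic equation $\phi(K)=0$ and a single sign change for the reduced one-variable function). However, the key step — the ``monotonicity/factorization argument'' in the third paragraph — contains a genuine gap: the two pieces you propose are not monotone in general.

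Concretely, take $h_1(s):=K^r s^r\bigl(a-cK^q s^q\bigr)=aK^r s^r-cK^{r+q}s^{r+q}$. If $r>0$, then $h_1'(s)=arK^r s^{r-1}-c(r+q)K^{r+q}s^{r+q-1}$ is \emph{positive} for small $s>0$, so $h_1$ is increasing, not decreasing, near the origin. Symmetrically, $h_2(s):=K^{p+1}s^p\bigl(bK^q s^q-d\bigr)$ satisfies $h_2'(s)=b(p+q)K^{p+q+1}s^{p+q-1}-dpK^{p+1}s^{p-1}$, which is \emph{negative} for small $s$ when $p>0$ and $d>0$. So the decomposition ``decreasing minus increasing'' fails, and no ``regrouping of the exponents'' along these lines fixes it, because each bracketed factor genuinely changes monotonicity on $(0,\infty)$ for admissible parameters.

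The missing idea is to control a \emph{ratio}, not each summand. Write $h=h_+-h_-$ with
$h_+(s)=aK^r s^r+dK^{p+1}s^p>0$ and $h_-(s)=cK^{r+q}s^{r+q}+bK^{p+q+1}s^{p+q}>0$ for $s>0$, and show $R:=h_+/h_-$ is nonincreasing. The logarithmic derivative of $h_+$ is $s^{-1}$ times a weighted average of $r$ and $p$, hence lies in $[\min(r,p)/s,\max(r,p)/s]$; likewise the logarithmic derivative of $h_-$ is $s^{-1}$ times a weighted average of $r+q$ and $p+q$, hence lies in $[(\min(r,p)+q)/s,(\max(r,p)+q)/s]$. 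Therefore
$$(\log R)'(s)\le \frac{\max(r,p)-\min(r,p)-q}{s}=\frac{|r-p|-q}{s}\le 0,$$
and this is precisely where the hypothesis $q\ge|p-r|$ is used. Since $\phi(K)=0$ gives $R(1)=1$, this yields $h\ge 0$ on $(0,1]$ and $h\le 0$ on $[1,\infty)$, which is the inequality (\ref{condition_fg}).

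Two smaller issues in part (ii): your reduction of candidate roots to $d/b\le K^q\le a/c$ is correct and gives the sign conclusions at the end of the statement, but the assertion that ``$\phi$ is strictly monotone on this interval'' is not justified and in fact fails in degenerate cases. For example, with $c=d=0$, $r>p+1$ and $q\ge r-p$, one has $\phi(K)=aK^r-bK^{p+q+1}$, which is not monotone on $(0,\infty)$ although it still has a unique positive root; uniqueness should instead be extracted from the strict decrease of $R$ (or from a normalization such as $\phi(K)/K^{\min(r,p+1)}$). Also, your limiting assertions that $\phi(K)\sim aK^r$ as $K\to 0^+$ and that $p+q+1$ strictly exceeds $r+q$ are not always true (e.g. $p=0$, $r=3$, $q=3$ violates both), though the conclusions $\phi\to 0^+$ (or $\to a$) and $\phi\to-\infty$ do hold once one observes that the smallest exponent among $\{r,p+1,r+q,p+q+1\}$ always carries a nonnegative coefficient and the largest always carries a strictly negative one.

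So: correct overall plan, and you correctly identify the heart of the matter, but the central monotonicity step as written does not hold and needs to be replaced by a ratio argument.
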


As in \cite{MSS}, in what follows we set
$$Z=\min(u,Kv),\qquad\mbox{and}\qquad
W=|u-Kv| $$
and we establish a system of elliptic inequalities satisfied by $Z$ and $W$.

\begin{lem}
\label{lem_principal_rn}
Let $F(D^2u)$ be an Isaacs operator.
Assume that (\ref{Hyp_abcdpqr}) holds and let $(u,v)$ be a positive
viscosity solution of (\ref{*system_Spqr}). Assume that $ ab \geq cd$.
\begin{itemize}
\item[a)] The functions $Z$ and $W$ are viscosity solutions on $\rn$ of
$$-F(D^2Z)\geq 0,\qquad \Mp(D^2 W)\geq 0 .$$
\item[b)] If $p+q< 1$, suppose in addition that $(u,v)$ is bounded.
Then $Z$ is a viscosity solution of
\begin{equation}
\label{DeltaZineq}
-F(D^2 Z) \ge CW^\beta Z^r\qquad\mbox{in }\; \rn,
\end{equation}
where $C>0$ and $\beta:=\max(p+q,1).$
\item[c)] Assume $r>p$ and $c, d>0$.
If $q+r< 1$, suppose in addition that $(u,v)$ is bounded.
Then $W$ is a viscosity solution of
\begin{equation}
\label{DeltaWineq}
\mathcal{M}^+(D^2 W) \ge CZ^p W^\gamma\qquad\mbox{in }\; \rn,
\end{equation}
where $C>0$ and $\gamma:=\max(q+r,1).$
\item[d)] Assume $ab>cd$. Then $Z$ is a viscosity solution of
$$-F(D^2 Z) \ge CZ^{p+q+r}\qquad\mbox{in }\; \rn.$$
\end{itemize}
\end{lem}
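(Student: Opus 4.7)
The plan is to combine the transitivity-type Lemmas \ref{lem_u-v}, \ref{lem_|u-Kv|}, and \ref{lem_min_uv} with algebraic lower bounds on the nonlinearities obtained by parametrizing the two regions $\{u\le Kv\}$ and $\{u>Kv\}$. Since $F(D^2u)$ has no gradient dependence, the constant $B$ in $(H_1)$ is $0$; moreover, $1$-homogeneity of $F$ gives $-F(D^2(Kv))=Kg$ in the viscosity sense, so those lemmas apply directly with $u,Kv$ as the two supersolutions and $f,Kg$ as the two right-hand sides.

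Part (a) is immediate: Proposition \ref{Prop-K}(ii) yields $f\ge 0$ on $\{u\le Kv\}$ (via $(u/v)^q\le K^q\le a/c$) and $Kg\ge 0$ on $\{u>Kv\}$ (via $bK^q\ge d$), so Lemma \ref{lem_min_uv} with $h\equiv 0$ delivers $-F(D^2Z)\ge 0$, while Lemma \ref{lem_|u-Kv|} (with $B=0$) delivers $\Mp(D^2W)\ge|f-Kg|\ge 0$. Part (d) is the same argument carried one order further: on $\{u\le Kv\}$ the identity
\[
f = K^{-p-q}\,Z^r(Z+W)^p\bigl[a(Z+W)^q-cK^q Z^q\bigr],
\]
combined with the trivial bounds $(Z+W)^q\ge Z^q$, $(Z+W)^p\ge Z^p$, and the strict inequality $a-cK^q>0$ from Proposition \ref{Prop-K}(ii) under $ab>cd$, gives $f\ge C Z^{p+q+r}$; the symmetric estimate on $\{u>Kv\}$ uses $bK^q-d>0$, and Lemma \ref{lem_min_uv} concludes.

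Part (b) refines the above by keeping the $W$-factor. On $\{u\le Kv\}$, the inequality $a\ge cK^q$ reduces the bracket to $cK^q\bigl[(Z+W)^q-Z^q\bigr]$ (when $c=0$ one uses $a(Z+W)^q\ge aW^q$ directly). For $q\ge 1$, superadditivity of $t\mapsto t^q$ gives $(Z+W)^q-Z^q\ge W^q$; for $0<q<1$, the mean value theorem gives $(Z+W)^q-Z^q\ge qW(Z+W)^{q-1}$. Combined with $(Z+W)^p\ge W^p$ this yields $f\ge C Z^r W^{p+q}$, which is the claim when $p+q\ge 1$; when $p+q<1$, boundedness of $(u,v)$ provides upper bounds on $W$ and $Z+W$, so $W^{p+q}\ge C'W$, giving $f\ge C''Z^r W$. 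The symmetric argument on $\{u>Kv\}$ uses $bK^q\ge d$, and Lemma \ref{lem_min_uv} concludes.

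For part (c), Lemma \ref{lem_|u-Kv|} reduces everything to a lower bound on $|f-Kg|$. Setting $t=u/(Kv)$, I write $f-Kg=v^{p+q+r}\Phi(t)$ with
\[
\Phi(t)=K^{p+1}d\,t^p-K^{p+q+1}b\,t^{p+q}+K^r a\,t^r-K^{r+q}c\,t^{r+q}.
\]
The defining equation for $K$ in Proposition \ref{Prop-K} gives $\Phi(1)=0$, and (\ref{condition_fg}) gives $\mathrm{sign}\,\Phi(t)=\mathrm{sign}(1-t)$, so $\Phi(t)=(1-t)\Psi(t)$ where $\Psi$ is continuous and strictly positive on $(0,\infty)$ (any other zero of $\Psi$ would be an additional zero of $\Phi$). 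Under $c,d>0$ and $r>p$, $p$ is the smallest of the four exponents and $r+q$ the largest, so $\Psi(t)\sim K^{p+1}d\,t^p$ as $t\to 0^+$ and $\Psi(t)\sim K^{r+q}c\,t^{r+q-1}$ as $t\to\infty$. Together with positivity of $\Psi$ on compact subsets of $(0,\infty)$, this yields a uniform estimate $|\Phi(t)|\ge C(\min(t,1))^p|t-1|^{r+q}$ when $r+q\ge 1$, which after the substitutions $Z=Kv\min(t,1)$, $W=Kv|t-1|$ becomes $|f-Kg|\ge C Z^p W^{r+q}$; when $r+q<1$ the weaker bound $|\Phi(t)|\ge C(\min(t,1))^p|t-1|$ combined with boundedness of $v$ absorbs the $v^{p+q+r-1}$ prefactor into a constant and produces $|f-Kg|\ge CZ^pW$. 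The main obstacle I anticipate is precisely this uniform control of $|\Phi|$: endpoint asymptotics and the zero at $t=1$ give local bounds, but positivity on all of $(0,\infty)$ uses both $c,d>0$ (fixing the signs of the extreme coefficients) and $r>p$ (arranging the exponent ordering so that the extreme terms genuinely dominate).
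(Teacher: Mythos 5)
Your treatment of parts (a), (b) and (d) follows essentially the same route as the paper (split on $\{u\le Kv\}$ vs.\ $\{u>Kv\}$, use Proposition~\ref{Prop-K} and the transitivity lemmas); the rewriting in terms of $Z,W$ rather than $u,Kv$ is cosmetic, and those parts are correct.

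Part (c) is where the approaches diverge. The paper obtains the crucial pointwise bound by citing Lemma 7.1(i) of \cite{MSS}, namely $|Kg-f|\ge C_0\,u^pv^p(u+Kv)^{q+r-p-1}|u-Kv|$, and then massages it using $\frac{uKv}{u+Kv}\ge \frac12\min(u,Kv)$. You instead try to re-derive an estimate of the same strength from scratch by factoring $\Phi(t)=(1-t)\Psi(t)$ and controlling $\Psi$. The weak point is precisely where you yourself flag an ``obstacle'': your only stated reason for $\Psi>0$ on all of $(0,\infty)$ is that ``any other zero of $\Psi$ would be an additional zero of $\Phi$.'' That argument rules out zeros of $\Psi$ at $t\neq 1$, but says nothing about $t=1$, where a vanishing of $\Psi$ would correspond not to an additional zero of $\Phi$ but to a \emph{double} zero there. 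You need to prove $\Phi'(1)<0$. This does hold under the hypotheses: writing $D=K^{p+1}d$, $A=K^ra$, $B=K^{p+q+1}b$, $C=K^{r+q}c$, the relation $\Phi(1)=0$ reads $D+A=B+C$, and one computes $\Phi'(1)=(r-p)(A-C)-q(B+C)=(r-p)(A-C)-q(D+A)\le (r-p-q)(A-C)-qD$, which is $<0$ because $q\ge r-p$ by \eqref{Hyp_abcdpqr}, $A-C\ge 0$ by Proposition~\ref{Prop-K}, and $D>0$ since $d>0$. Without some such computation, the claimed uniform lower bound on $\Psi$ is not justified. (A secondary, minor point: your displayed bound $|\Phi(t)|\ge C\min(t,1)^p|t-1|$ for the case $q+r<1$ is false for $t\to\infty$, since the left side grows only like $t^{q+r}$; what actually saves the argument is that $v(1+t)=v+u/K$ is bounded when $(u,v)$ is bounded, which keeps the prefactor $v^{q+r-1}(1+t)^{q+r-1}$ bounded below --- this should be stated rather than absorbed in ``boundedness of $v$.'')

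So: if you supply the $\Phi'(1)<0$ computation, your self-contained analysis of $\Phi$ is a legitimate alternative to citing \cite{MSS} Lemma 7.1(i); as written there is a genuine gap at $t=1$.
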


Most of the arguments in the proof of this lemma are  similar to \cite{MSS} (except in c)), thanks to Lemma \ref{lem_|u-Kv|} and Lemma \ref{lem_min_uv}. We sketch the proof here.

\begin{proof}
\textbf{ a)} By Proposition~\ref{Prop-K}, we have
\begin{equation}
\label{abcdK}
a\geq cK^q,\quad bK^q\geq d .
\end{equation}
Hence, on the set $\{u\le Kv\}$, we have
$$f(u,v)=u^rv^p[av^q-cu^q]\geq c u^rv^p[(Kv)^q-u^q]\geq 0$$
and similarly on the set $\{u > Kv\}$, we have
$g(u,v)\geq 0.
$
Now, we apply Lemma \ref{lem_|u-Kv|} and Lemma \ref{lem_min_uv} to $u$ and $Kv$ with $h=0$ and deduce a).

\textbf{ b)} We have
$x^q-y^q\ge C_q x^{q-1}(x-y)$, for any real $x\geq y\geq 0$,
where $C_q =1$ if   $q \geq 1$ and  $C_q =q$   if   $0<q<1$.
Using (\ref{abcdK}), on the set $\{u\le Kv\}$ we have
\begin{align*}
f(u,v)&=u^rv^p[av^q-cu^q]\geq \frac{a}{K^q}\,u^r v^p[(Kv)^q-u^q]\\
&\geq \frac{a C_q }{K}  u^rv^{p+q-1}(Kv-u)\geq \frac{a  C_q }{K^{p+q}} u^r(Kv)^{p+q-1}(Kv-u)
\geq C_1 Z^rW^\beta
\end{align*}
for some $C_1>0$ (we use that if $p+q\geq 1$, we have $(Kv)^{p+q-1}\geq (Kv-u)^{p+q-1}$,
whereas if $p+q<1$, then $(Kv)^{p+q-1}\geq C_0$ for some $C_0>0$, since $v$ is assumed bounded).
Similarly, on the set $\{u > Kv\}$, we have $g(u,v)\geq C_2 Z^rW^\beta$,  for some $C_2>0$.

Now, we apply Lemma \ref{lem_min_uv} to $u$ and $Kv$ with $h=C Z^rW^\beta$ where $C= \min(C_1,K C_2)$ (since $-F(D^2[Kv])=Kg(u,v)$) and  obtain that
$Z=\min(u,Kv)$  is a viscosity solution of
$$ -F(D^2Z)\geq C Z^rW^\beta.$$

\textbf{d)} Since $ab>cd$, we know from Proposition~\ref{Prop-K} that for some small $\epsilon>0$ we have $a\geq cK^q+\epsilon$, $bK^q\geq d+\epsilon$.
Hence,  on the set $\{u\le Kv\}$ we have
\begin{align*}
f(u,v)&=u^rv^p[av^q-cu^q]\geq \epsilon \,u^r v^{p+q}+ c\,u^r v^p[(Kv)^q-u^q]\\
&\geq \epsilon \,u^r v^{p+q}
\geq \epsilon K^{-p-q}\,Z^{r+p+q}.
\end{align*}
and, similarly, on the set $\{u> Kv\}$ we have the same inequality for $g$.

We again apply Lemma \ref{lem_min_uv} to $u$ and $Kv$ with $h=C Z^{r+p+q}$, where we set $C= \epsilon\min(K^{-p-q}, K^{1-q-r})$, and  obtain that
$Z=\min(u,Kv)$  is a viscosity solution of
$$ -F(D^2Z)\geq C Z^{p+q+r}.$$

\textbf{c)}
 Thanks to Lemma 7.1 i) in \cite{MSS}, we know that, since $r>p$ and $c, d>0$, we have for some $C_0>0$
$$|Kg(u,v)-f(u,v)|\geq C_0 \;u^pv^p (u+Kv)^{q+r-p-1}|u-Kv|. $$
We also note that for $x,y \geq 0$ and $x+y>0$, we have $$\frac{xy}{x+y}\geq \frac{1}{2}\min(x,y) .$$
Hence,
\begin{align*}
|Kg(u,v)-f(u,v)| &\geq  \frac{C_0}{K^p}\left[\frac{u\; Kv}{u+Kv}\right]^p(u+Kv)^{q+r-1}|u-Kv|\\
& \geq \frac{C_0}{(2K)^p}Z^p (u+Kv)^{q+r-1}|u-Kv|\geq C\,Z^p W^\gamma,
\end{align*}
for some $C>0$  (using that if $q+r\geq 1$, we have $(u+Kv)^{q+r-1}\geq |u-Kv|^{q+r-1}$,
whereas if $q+r<1$, we have $(u+Kv)^{q+r-1}\geq C'_0$ for some $C'_0>0$,  since $u$ and $v$ are assumed bounded).

Now, thanks to Lemma \ref{lem_|u-Kv|}, we obtain that in the viscosity sense
$$ \mathcal{M}^+ (D^2 W) \ge CZ^p W^\gamma\qquad\mbox{in }\; \rn.$$
\end{proof}

We now can give the following

\begin{proof}[Proof of Theorem \ref{thm_proportionnalite_rn}. ]
Let $(u,v)$ be a positive viscosity solution of (\ref{*system_Spqr}) in $\rn$. Let $Z$ and $W$ be defined as in the previous lemma.\smallskip

\textbf{Proof of i)} Assume that $W\neq 0$.
From Lemma \ref{lem_principal_rn}  b), we know that
 $Z$ is a viscosity solution of
\begin{equation}
-F(D^2 Z) \ge C\,V\, Z^r\qquad\mbox{in }\; \rn,
\end{equation}
where $C>0$ and
 $V=W^\beta$, with $\beta:=\max(p+q,1).$
From Lemma \ref{lem_principal_rn}  a), we know that
$W$ is a viscosity solution on $\rn$ of
$\Mp(D^2 W)\geq 0 $
so, since $\beta \geq 1$,
$V$ is a viscosity solution on $\rn$ of
$$\Mp(D^2 V)\geq 0 .$$
  Moreover, $V\gneqq 0$, hence,
by Lemma \ref{lem_liminf_V_sousharmonique}, $V$ satisfies the hypothesis of Lemma~\ref{lem_-F(D^2u)geq_V_u^r}. Therefore $Z=0$, a contradiction since $u,v>0$. Then $W=0$, i.e. $u=Kv$.\smallskip

\textbf{Proof of ii)} We can assume $$q+r>1.$$ Indeed, if $q+r\leq 1$, then
$u,v$ are bounded and  $r\leq q+r\leq 1<\frac{\alpha ^*(F)+2}{\alpha^*(F)}$ so the result follows from  a), which we already proved.

We can also assume that $$r>p.$$
Indeed, if $p+q< 1$, then $p+q<1<q+r$ so $r>p$;  if $p+q\geq 1$, we can assume $r> \frac{\alpha ^*(F)+2}{\alpha^*(F)}$ (or else the result follows from a)), so $r>\frac{2}{\alpha^*(F)}\geq p.$

Since $Z$ is a viscosity solution on $\rn$ of
$-F(D^2Z)\geq 0, $
 by Lemma \ref{lem_nonexistence_superharmonic_if_alpha*<=0} ii) there exists $m>0$ such that
$$Z^p\geq \frac{m^p}{|x|^{p\,\alpha ^*(F)}} \qquad \text{ for all } x\in\rn \setminus B_1.$$
Therefore since $ p\leq \frac{2}{\alpha ^*(F)}$,  there exists $A_1>0$ such that
$$Z^p\geq \frac{A_1}{1+|x|^2} \qquad \text{ for all }x\in\rn \setminus B_1.$$
Moreover, $Z^p>0$ is continuous on $B_2$ so there exists $A_2>0$ such that
$Z^p\geq \frac{A_2}{1+|x|^2}$ for all $x\in B_1$.

Therefore, since $r>p$ and $c,d>0$, by Lemma \ref{lem_principal_rn}  c)  $W$ is a viscosity solution of
$$\mathcal{M}^+(D^2 W) \ge \frac{A}{1+|x|^2}  W^\gamma \qquad \text{ in } \; \rn,$$
for some $A>0$ and $\gamma=q+r>1$.
Then, by Lemma \ref{lem_type_Lin}, we have $W=0.$\end{proof}

\begin{proof}[Proof of  Theorem \ref{thm_liouville_sol_positive_rn}.] Let $(u,v)$ be a bounded positive viscosity solution on $\rn$ of (\ref{*system_Spqr}). Then from
Theorem \ref{thm_proportionnalite_rn} we know that
$u=Kv.$
Hence $v$ is a solution of
$$-F(D^2v)=K^{p}(bK^q-d)v^\sigma .$$
But since $ab>cd$, we know from  Proposition \ref{Prop-K} that $bK^q-d>0$, so, by using the scaling of the equation and the hypothesis, we get $v=0$ and hence $u=0$.\end{proof}
\smallskip

\begin{proof}[Proof of Theorem \ref{thm_liouville_rn}].
Let $(u,v)$ be a nonnegative bounded viscosity solution of (\ref{*system_Spqr}).
First note that $u$ is a viscosity solution of
$$-F(D^2u)+Cu\geq 0$$
where $C=c\,v^p\,u^{q+r-1}\geq 0$.
We can apply the strong minimum principle and deduce that
$u=0$ or $u>0$. The same is true for $v$. By Theorem \ref{thm_liouville_sol_positive_rn} at least one of $u$, $v$ vanishes.

Assume for instance that $u=0$.
First, if $p>0$ then $-F(D^2v)=0$, so by Lemma~\ref{lem_harmonic_bounded} we have $v=C_2\geq 0$. Second,
if $p=0$ then $F(D^2v)=d\,v^{q+r}$.
 Now, if $d=0$, then $v=C_2$, while  if $d>0$,
then $v=0$ by Lemma \ref{lem_type_Lin} because $q+r>1$.

The same analysis can be done if $v=0$. Hence, in all cases $(u,v)$ is semitrivial.
Finally, if $r=0$, then it is clear that if $u=0$, then $v=0$, and vice versa. The last statements are obvious.\end{proof}

\section{A priori estimates and an existence result in a bounded domain}\label{sect6}

In this section we prove Theorem~\ref{AprioriBound2}.
We recall that we consider the following system with  general lower order terms
\begin{equation}
\label{system_pqr_Dir2}
\left\{\quad{\alignedat2
-\F[u]&=u^rv^p\bigl[a(x)v^q-c(x)u^q\bigr]+h_1(x,u,v), &\qquad&x\in \Omega,\\
-\F[v]&=v^ru^p\bigl[b(x)u^q-d(x)v^q\bigr]+h_2(x,u,v), &\qquad&x\in \Omega,\\
u&=v=0, &\qquad&x\in \partial\Omega,\\
 \endalignedat}\right.
\end{equation}
where $\Omega$ is a bounded Lipschitz  domain of $\mathbb{R}^n$, and $\F$ satisfies
 $(H_1)-(H_2)$.

\begin{proof}[\textit{Proof of Theorem~\ref{AprioriBound2}}.] The proof of this theorem follows the same scheme as the proof of Theorems 6.1 and 6.2 of \cite{MSS}, where $\F$ was assumed to be the Laplacian. We will not repeat the parts of the proof where it is similar to the one in the previous paper, and will only highlight the differences (most of which appear after the definition of the function $\mathcal{S}$ below).

We  consider the following parametrized version of system (\ref{system_pqr_Dir2}),
\begin{equation}
\label{system_pqr_Dir3}
\left\{\ {\alignedat2
-\F[u] &=F(t,x,u,v), &\qquad &x\in \Omega,\\
 -\F[v]&=G(t,x,u,v), &\qquad&x\in \Omega,\\
u&=v=0, &\qquad &x\in \partial\Omega,\\
 \endalignedat}\right.
\end{equation}
where $F$ and $G$ are defined as in \cite{MSS}.

We perform exactly the same ``blow-up" change of variables as in \cite{MSS}. The only difference is that now the modified functions $\tilde u_j, \tilde v_j$ satisfy a system where appears the elliptic operator $$F(D^2 \tilde u_j, \lambda_j D \tilde u_j , x_j+ \lambda_jy),$$
and $\lambda_j\to0$. By using the global $C^{1,\alpha}$-estimates for Isaacs operators and Ascoli's theorem, we can extract a subsequence of $(\tilde u_j, \tilde v_j)$ which converges in $C^1_{\mathrm{loc}}$ (the way to perform such a limit argument for fully nonlinear operators is described in extenso in \cite{QSsys}). After the passage to the limit by Lemma \ref{lem_convergence_suite_de_solutions}, we obtain a bounded nonnegative viscosity solution $(U,V)$ of
\begin{equation}
\label{proofAprioriUV}
\left\{\;{\alignedat2
 -F(D^2 U,0,x_0) &= U^r V^p\bigl[a_0V^q-c_0U^q\bigr],    \\
 -F(D^2 V,0,x_0) &= V^r U^p\bigl[b_0U^q-d_0V^q\bigr],    \\
  \endalignedat}\qquad\right.
\end{equation}
either in $\rn$ or in a cone of $\rn$ which resembles locally  the boundary of $\Omega$ at a neighborhood of some point of $\partial \Omega$.
In addition $c_0d_0<a_0b_0$  in view of (\ref{hypApriori2}), and $U(0)\geq 2^{-\alpha}>0$. By the strong maximum principle $U>0$ everywhere.

If the limiting domain is a cone, then Theorem \ref{thm_proportionnalite_hn} implies that $U$ and $V$ are proportional, hence both $U$ and $V$ are positive. This contradicts the assumption of  Theorem~\ref{AprioriBound2}.

So we can assume \re{proofAprioriUV} is set in $\rn$. By performing a rotation in $\rn$ we may assume the operator in the left-hand side of \re{proofAprioriUV} is $\overline{F}_{Q,x_0}$, for any initially chosen orthogonal matrix $Q$.

By the assumption of the theorem $U$ and $V$ cannot be both positive, so by  Theorem \ref{thm_liouville_rn} there is a constant $\bar C>0$ such that
$U\equiv \bar C$  and $V\equiv 0$. Hence
\begin{equation}
\label{proofApriori5}
\lim_{j\to\infty}(\tilde u_j, \tilde v_j)=(\bar C,0)\quad\hbox{ locally uniformly on $\mathbb{R}^n$.}
\end{equation}

Next, the exclusion of such semi-trivial rescaling limits is done exactly as in \cite{MSS} if we observe that the definition of the principal eigenvalue of a fully nonlinear operator (recall Definition \ref{def33}) trivially implies
$$\lambda_1^+(-\F,B_R)=\frac{\lambda_1^+(-\F,B_1)}{R^2}.$$

Thus the proof of the a priori bound \re{AprioriBoundM} is concluded.

The proof of the existence part of Theorem~\ref{AprioriBound2} goes again like in \cite{MSS}, replacing the Laplacian and its first eigenvalue by the fully nonlinear operator $\ofq$ and its first eigenvalue defined in Definition \ref{def33}, also observing that, by \cite{Armstrong} (see the discussion after Corollary 3.6 in that paper) that
\begin{equation}
\label{lambda_1_def_equivalente}
\lambda_1^+(-\mathcal{M}^+)\le \lambda_1^+(-\F)=\sup \{\lambda, \; -\F+\lambda \text{ satisfies the maximum principle}\}.
\end{equation}

We use the same fixed point theorem of  Krasnoselskii and Benjamin as in \cite{MSS} and the proof stays almost identical until the definition of the function $\mathcal{S}$. Instead of setting $\mathcal{S}=\sqrt{uv}$ as in \cite{MSS}, we now set
$$
\mathcal{S}=\min\{u,v\}.
$$
and we  prove that, in the viscosity sense,
\begin{equation}
\label{tuktu}
-\F[{\cal S}]\geq (A-C_1){\cal S} \quad \mbox{ in }\;\Omega.
\end{equation}
This implies that
$\lambda_1^+(-F,\Omega)\geq A-C_1$, which is in contradiction with the arbitrary choice of $A$ (see (6.12), (6.13), (6.24) in \cite{MSS} for more on the choice of $A$).

We know that, in the viscosity sense,
$$ \begin{aligned}
   -\F[u] &\geq  u^r v^p\bigl[(a(x)+A)v^q-c(x) u^q\bigr]+(A-C_1)u+A&\hbox{in }\Omega,\\
   -\F[v]&\geq v^r u^p\bigl[(b(x)+A)u^q- d(x) v^q\bigr]+(A-C_1)v+A& \hbox{ in }\Omega.
\end{aligned}
$$

Since we choose  $A$ such that $A>\max\{ \|c\|_\infty, \|d\|_\infty\}$, we have  $Av^q\ge c(x) u^q$ in the subset of $\Omega$ where $v\ge u$, and $Au^q\ge d(x) v^q$ in the subset of $\Omega$ where $u\ge v$. Therefore
$$ \begin{aligned}
   -\F[u] &\geq  (A-C_1)u =(A-C_1)\mathcal{S}&\hbox{in }\Omega\cap \{v\ge u\},\\
   -\F[v]&\geq (A-C_1)v=(A-C_1)\mathcal{S} & \hbox{ in }\Omega\cap \{u\ge v\}.
\end{aligned}
$$
By Lemma \ref{lem_min_uv} we get \re{tuktu}.

Finally, at the end of the proof, in order to show that the first hypothesis of the fixed point theorem (see Theorem 6.3 in \cite{MSS}) is  verified, we again argue by contradiction.
Now, for any (small) $\delta>0$ we can find a
 positive solution $(u,v)$ with $\|(u,v)\|\le \delta$,
of (\ref{system_pqr_Dir2}) with the right-hand side of this system
multiplied by {some} $\eta\in[0,1]$. By using \eqref{hypExist3} we obtain,
 with $\lambda_1=\lambda_1^+(-\F, \Omega)$ and for some $\epsilon_0>0$,
 $$ \begin{aligned}
   -\F[u] &\leq C u^rv^{p+q} +  (\lambda_1-\epsilon_0)\max\{u,v\}\le C(\max\{u,v\})^\sigma  +  (\lambda_1-\epsilon_0)\max\{u,v\},\\
   -\F[v]&\leq C v^r u^{p+q} +  (\lambda_1-\epsilon_0)\max\{u,v\}\le C(\max\{u,v\})^\sigma  +  (\lambda_1-\epsilon_0)\max\{u,v\}.
\end{aligned}
$$

As in Lemma 3.4 the maximum of subsolutions is a viscosity subsolution, so
$$
\begin{aligned}
-\F[\max\{u,v\}] &\le C(\max\{u,v\})^\sigma  +  (\lambda_1-\epsilon_0)\max\{u,v\}\\
&\le (C\delta^{\sigma-1}+\lambda_1-\epsilon_0)\max\{u,v\} \qquad\mbox{ in }\; \Omega
\end{aligned}
$$
Now, if we choose  $\delta$ sufficiently small,
 by \eqref{lambda_1_def_equivalente} we get $\max\{u,v\}\leq 0$, a contradiction.

Theorem~\ref{AprioriBound2} is proved.\end{proof}

\end{document}